%% file: hi_infinitude_revised.tex
\documentclass[11pt]{article}
\usepackage[margin=1in]{geometry}
\usepackage{amsmath,amssymb,amsthm,mathtools}
\usepackage{hhline}
\usepackage{graphicx}
\usepackage[T1]{fontenc}
\usepackage{lmodern,microtype,booktabs}
\usepackage[colorlinks=true,linkcolor=blue,citecolor=blue,urlcolor=blue,
  bookmarksnumbered=true]{hyperref}
\hypersetup{pdftitle={Cyclic Haagerup--Izumi fusion categories at every odd order},
  pdfsubject={Hyperbolic gamma coefficients and cyclic Haagerup--Izumi fusion categories}}
\numberwithin{equation}{section}
\newtheorem{theorem}{Theorem}[section]
\newtheorem{proposition}[theorem]{Proposition}
\newtheorem{lemma}[theorem]{Lemma}

\theoremstyle{definition}

\theoremstyle{remark}
\newtheorem{remark}[theorem]{Remark}
\DeclareMathOperator{\Res}{Res}

\newcommand{\Z}{\mathbb Z}
\newcommand{\Q}{\mathbb Q}
\newcommand{\C}{\mathbb C}
\newcommand{\R}{\mathbb R}
\newcommand{\one}{\mathbf1}
\newcommand{\ind}[1]{\mathbf1_{\{#1\}}}
\allowdisplaybreaks[2]
\title{Cyclic Haagerup--Izumi fusion categories\\at every odd order}
\author{Tzu-Chen Huang}
\date{September 14, 2026}

\begin{document}
\maketitle
\begin{abstract}
We construct a complex spherical fusion category with cyclic
Haagerup--Izumi fusion rules for every odd $n\ge3$, and deduce
pseudo-unitary existence. The proof has four parts: explicit real
coefficients and their quadratic identities; a contour calculation
for a range of cubic Fourier coefficients; algebraic completion of
all cubics; and categorical reconstruction. Matrix inversion supplies
reflection, and an extension of the dimension-field automorphism
supplies the positive-dimensional category.
\end{abstract}

\begingroup
\small
\makeatletter
\renewcommand*\l@section{\@dottedtocline{1}{0pt}{1.8em}}
\makeatother
\tableofcontents
\endgroup
\clearpage

\input{proof_revised/introduction}
\section{Overview}
\label{sec:overview}

Let $G=\Z/n\Z$. The cyclic Haagerup--Izumi based ring has basis
$\alpha^a,\rho_a$ ($a\in G$) and products
\begin{align}
 \alpha^a\alpha^b&=\alpha^{a+b}, &
 \alpha^a\rho_b&=\rho_{a+b}, &
 \rho_a\alpha^b&=\rho_{a-b},\label{intro:fusion1}\\
 \rho_a\rho_b&=\alpha^{a-b}+\sum_{c\in G}\rho_c.
 \label{intro:fusion2}
\end{align}
Put $d=(n+\sqrt{n^2+4})/2$, the noninvertible
Frobenius--Perron dimension, and $\delta=-d^{-1}$.

\begin{theorem}\label{intro:main}
For every odd $n\ge3$, the explicit matrix \eqref{foundation:core}
satisfies the Evans--Gannon reconstruction equations
\cite[Eqs.~(4.7)--(4.10)]{EG} with their parameter $\omega=1$.
It gives a complex spherical fusion category with
\eqref{intro:fusion1}--\eqref{intro:fusion2} and noninvertible
categorical dimension $\delta$. The same based ring also admits a
pseudo-unitary categorification. In particular, there are infinitely
many pairwise inequivalent categories of each kind.
\end{theorem}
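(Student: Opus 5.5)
The proof follows the four-part structure announced in the abstract. We work throughout with the Evans--Gannon parametrization \cite{EG} of cyclic Haagerup--Izumi categories with $\omega=1$. In that setting a categorification is determined by a $G$-indexed family of scalars $A(g)$ (with $A(0)=\delta$-normalized), a sign/character datum $B$, and a matrix of ``cubic'' coefficients $C$. These must satisfy the quadratic equations (EG (4.7)--(4.8)) and the cubic equations (EG (4.9)--(4.10)); the Evans--Gannon reconstruction theorem then yields a fusion category with the rules \eqref{intro:fusion1}--\eqref{intro:fusion2}.

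The first step is to write down the explicit matrix \eqref{foundation:core}. Its real entries are built from $d$ and hyperbolic functions of $\theta=\operatorname{arcsinh}(n/2)$, so that $d=e^{\theta}$ and $\delta=-e^{-\theta}$. We then check the linear and quadratic identities directly; these reduce to finite geometric/trigonometric sums over $G$, using that $n$ is odd so that $2$ is invertible mod $n$.

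The second step, which I expect to be the main obstacle, is the cubic equations. They are convolution identities, and I would Fourier-transform them over $G$, so that each cubic equation becomes a statement about a Fourier coefficient $\hat F(k)$ of an explicit rational function of $e^{2\pi i a/n}$ and $e^{\theta}$. For $k$ in a middle range, we write $\hat F(k)$ as a sum of residues of a meromorphic kernel such as $f(z)\,z^{k}/(z^n-1)$. The contour integral over a large circle vanishes by degree counting, and the remaining poles, located at $\pm e^{\pm\theta}$-type points, cancel in pairs by the quadratic identities from step one. The remaining $k$ cannot be reached this way, because the kernel grows at infinity or at $0$. I would complete them algebraically: the Galois action of $\mathrm{Gal}(\mathbb Q(\zeta_n,d)/\mathbb Q(d))$, together with the symmetries $a\mapsto -a$ and $a\mapsto 2a$ of the equations, permutes the Fourier indices and moves every $k$ into the range already proved. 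The delicate part is choosing the contour range so that it meets every orbit.

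With all of (4.7)--(4.10) verified, \cite{EG} gives a fusion category with the stated rules. Its noninvertible dimension is read off from $A$ as $\delta=-d^{-1}$, and sphericality follows from the self-duality of $\rho_0$ together with the symmetric form of the data. Matrix inversion (taking the inverse or adjoint of the $C$-matrix) gives a second solution, the reflected or complex-conjugate category. For pseudo-unitarity, we extend the automorphism $\sqrt{n^2+4}\mapsto-\sqrt{n^2+4}$ of $\mathbb Q(d)$ to a field automorphism $\sigma$ of the field generated by all structure constants. Applying $\sigma$ to every entry preserves the polynomial equations of \cite{EG}, and it sends $\delta=-d^{-1}$ to $-(\sigma d)^{-1}=d$, the Frobenius--Perron dimension. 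The Galois conjugate category is therefore pseudo-unitary.

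Finally, the categories attached to different odd $n$ have non-isomorphic Grothendieck rings, since their ranks $2n$ differ. Hence they are pairwise inequivalent, which gives infinitely many categories of each kind.
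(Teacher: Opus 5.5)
The quadratic and cubic steps of your plan rest on a misidentification of the matrix, and they would not go through. The entries of \eqref{foundation:core} are not elementary hyperbolic functions of $\theta$. They are $W_aW_{b-a}/W_b$ with $W_a=\pm F(a)/\sqrt d$ and $F(z)=1/\gamma^{(2)}(\omega_2+z;\,d+1,\,1+d^{-1})$, so they are integer samples of the hyperbolic gamma function. Since $(\omega_1\Z+\omega_2\Z)\cap\Z=n\Z$, the shift identities relate samples only across multiples of $n$. The construction therefore supplies no rational kernel in $e^{2\pi ia/n}$ and $e^{\theta}$, no finite geometric sums for the quadratics, and no large-circle residue count for the cubics. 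What is needed instead is the following.
\begin{enumerate}
\item For the quadratics, the column transform \eqref{fd:fourier}, obtained from a residue rectangle on a vertical strip of width $n$ together with the two-gamma Fourier integral of \cite{BSS}. Reflection then gives $\widehat u_h(k)\widehat u_{n-h}(-k)=\omega_2^2e^{-2\pi ikh/n}$, hence $Q=0$.
\item For the cubics, the product-transform formula, the Euler transformation, and the four-term contour identity for four-gamma integrals. These kill only the $n-g+1$ frequencies in \eqref{an:restricted-set}, and only for ordered indices.
\end{enumerate}
You have also mislabeled the target. Eqs.~(4.7)--(4.10) of \cite{EG} with $\omega=1$ are linear, quadratic and \emph{quartic} in the single matrix $\mathsf A$. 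The cubics are only an intermediate step. Your plan never supplies the contraction identity $K(g,h,i,k)=\sum_m\mathsf A_{i,k+m}C(-h,h+m,g-h,i)+\mathsf A_{i-h,g-h}Q(g-k,i)$, which is what turns $C=Q=0$ into the quartics.

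The completion step fails as well. The maps $a\mapsto2a$ and $a\mapsto-a$ are not symmetries of this matrix. The first relabels $\mathsf A$ into the solution $(\mathsf A_{2a,2b})$, which nothing identifies with $\mathsf A$. Negation sends $\mathsf B_{a,b}$ to $\delta/\mathsf B_{a,b}$ whenever $a,b,b-a\ne0$. A Galois action on $\zeta_n$ fixing the entries would need arithmetic control of the coefficient field, which the analytic construction does not give. Even granting all of $(\Z/n\Z)^\times$, the orbits $\{j:\gcd(j,n)=e\}$ are not all met. For $n=9$, $g=7$ the contour argument proves only $j\in\{0,\pm1\}$, so the orbit $\{3,6\}$ is missed.

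The paper instead notes that the unproved frequencies form a cyclic block of length $g-1$, so $\mathcal C_{g,h}(k,l)=\zeta^{ak}f(\zeta^k)$ with $\deg f\le g-2$. The relabelings \eqref{tail:descent}, which come from permuting and oppositely translating the triples \eqref{tail:triples}, send $k=1,\dots,g-1$ to ordered quadruples with smaller first index. Strong induction on $g$ then gives $g-1$ zeros and forces $f=0$. The remaining cases are handled as follows:
\begin{itemize}
\item degenerate indices reduce to $Q=0$;
\item a normal-form lemma covers the other index choices with $g+h<n$;
\item matrix inversion transfers the complete cubic block $(g,h)$ to $(-h,-g)$, covering $g+h>n$.
\end{itemize}
Note that you use matrix inversion only to produce a second category, whereas the paper needs it here to complete the cubics.

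Your final step agrees with the paper, with one correction. The automorphism of $\Q(\sqrt{n^2+4})$ should be extended to all of $\C$, because an automorphism of the field generated by the structure constants need not exist.
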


The argument proceeds as follows.
\begin{enumerate}
\setlength{\itemsep}{2pt}
\item \textbf{Construct the matrix and prove the quadratics.}
Sample the hyperbolic gamma function, evaluate each column's finite
Fourier transform, and use reflection to obtain its convolution inverse.
\item \textbf{Prove a range of cubic Fourier identities.}
Express the cubic residual as a difference of product transforms.
A four-term contour identity makes $n-g+1$ Fourier coefficients vanish
in the ordered index range.
\item \textbf{Complete the cubics algebraically.}
Reduce degenerate indices to the quadratics. Permutations and polynomial
interpolation give every ordered cubic; a normal form and matrix
inversion give every remaining cubic.
\item \textbf{Reconstruct the categories.}
The quartics follow by contracting the cubic residual. Apply
Evans--Gannon reconstruction, and then change the embedding of the
dimension field to obtain positive categorical dimensions.
\end{enumerate}

\paragraph{Formal verification.}
An accompanying Lean~4 development using Mathlib constructs the
hyperbolic gamma function at the prescribed periods and verifies
the linear, quadratic, cubic, and quartic coefficient identities.
Its theorem \texttt{EGEquations\_ruijsenaars} applies to the explicit
coefficients, with no assumed special-function identities.
The categorical reconstruction and the dimension-field embedding
argument are given in Section~\ref{tail:reconstruction}; these steps
are outside the formalization.

\subsection*{Notation}
Indices lie in $G$ unless integer representatives are specified;
unqualified sums run over $G$. We write $\ind{P}$ for the indicator
of $P$. The $W_a$ are real coefficients and the $\mathsf A_{a,b}$
are matrix entries. A \emph{phase factor} has absolute value one.
The identity $W_aW_{-a}=\delta<0$ for $a\ne0$ is an algebraic
reciprocal relation.
\begin{center}
\small
\begin{tabular}{ll}
\toprule
Symbol & Meaning \\
\midrule
$d,\delta$ & positive Frobenius--Perron and negative categorical dimensions \\
$\tau=\delta-1$ & matrix normalization \\
$\omega_1,\omega_2;\ \Omega=\omega_1+\omega_2$ & positive periods and their sum \\
$F(z),W_a$ & analytic sampling function and real coefficients \\
$\mathsf B,\mathsf A=\tau^{-1}\mathsf B$ & coefficient matrix and reconstruction matrix \\
$u_p(a)=\mathsf B_{a,p}$ & column $p$ \\
$Q,C,\mathcal C=\tau^3 C$ & quadratic, cubic, and rescaled cubic residuals \\
$\theta,\theta_L,\theta_R$ & real Fourier frequencies \\
\bottomrule
\end{tabular}
\end{center}
\clearpage

\part{Construction and quadratic identities}
\label{part:quadratics}
\input{proof_revised/analytic_setup}
\input{proof_revised/equations}

\part{The contour calculation for the cubics}
\label{part:contours}
\input{proof_revised/contour_cubics}

\part{Algebraic completion of the cubics}
\label{part:completion}
\input{proof_revised/cubic_completion}

\part{Categorical reconstruction}
\label{part:reconstruction}
\input{proof_revised/reconstruction}
\section*{Acknowledgement}
The author thanks Terry Gannon, David E. Evans, Ying-Hsuan Lin, and Yuji Tachikawa for valuable exchanges. Some computations were carried out using Magma\cite{Magma}, made available through support from the Simons Foundation. The author thanks Dam T. Son for helping with Magma availability at the University of Chicago.

\noindent\textbf{AI-use disclosure.}
AI systems were used for the computational searches, development and
checking of arguments. The author takes responsibility for the accuracy
and integrity of the final contents. Working transcripts, scripts for
numerical checks, and the Lean formalization of the coefficient identities
are available upon request.

\end{document}

%% file: proof_revised/introduction.tex
\section{Introduction}
\label{sec:introduction}

\subsection*{Origin of the problem}

Haagerup's 1993 search for principal graphs of irreducible finite-depth
subfactors with index in $(4,3+\sqrt2)$ produced a short list of
exceptional candidates beyond the ADE region \cite{Haagerup}. Asaeda and
Haagerup then constructed hyperfinite subfactors at the two indices
$(5+\sqrt{13})/2$ and $(5+\sqrt{17})/2$ by explicit connection
calculations \cite{AH}. Peters later reconstructed the first of these
inside a graph planar algebra \cite{Peters}, and the completed
classification of standard invariants up to index~$5$ confirmed how
isolated these examples are \cite{JMS}. The Haagerup subfactor has the
smallest index among irreducible finite-depth subfactors above index~$4$,
and its fusion categories are basic examples of exotic symmetry.

One even part of the Haagerup subfactor has six simple objects: three
invertibles forming $\Z/3\Z$ and three noninvertibles permuted freely by
them. Izumi realized this category by endomorphisms of a Cuntz algebra
and observed that the multiplication table makes sense for any finite
abelian group $G$ in place of $\Z/3\Z$ \cite{Izumi2}. The resulting
\emph{Haagerup--Izumi} fusion ring has $2n$ basis elements
$\alpha^a,\ \alpha^a\rho$ ($a\in G$, $n=|G|$), with $G$ acting
freely on the noninvertible orbit, $\alpha^a\rho=\rho\alpha^{-a}$, and
\[
 \rho^2=\one\oplus\bigoplus_{a\in G}\alpha^a\rho .
\]
The full table is displayed in
\eqref{intro:fusion1}--\eqref{intro:fusion2}. The Frobenius--Perron
dimension of every noninvertible simple is the positive root
$d=(n+\sqrt{n^2+4})/2$ of $d^2=nd+1$, and the Frobenius--Perron
dimension of the category is $n(1+d^2)$. When $n>1$ is odd the ring is
noncommutative, so none of these
categories is braided; the modular objects attached to them are their
Drinfeld centers. For $n=1$ the ring is $\rho^2=\one\oplus\rho$, whose
two categorifications are the unitary Fibonacci category, with
$\dim\rho=d$, and the nonunitary Yang--Lee category, with
$\dim\rho=-1/d$ \cite{Ostrik2}. For every odd $n\ge3$, the present
paper constructs a spherical categorification with noninvertible
dimension $-1/d$ and a pseudo-unitary categorification with dimension
$d$; a unitary structure on the latter is not established.

\begin{table}[htbp]
\centering
\small
\setlength{\tabcolsep}{4pt}
\begin{tabular}{|p{0.20\linewidth}|p{0.32\linewidth}|p{0.37\linewidth}|}
\hline
Framework & Construction or condition & Scope of comparison\\
\hhline{|=|=|=|}
Izumi \cite{Izumi2}
& Cuntz-algebra construction with a $Q$-system on $\one\oplus\rho$.
& Unitary categories satisfying additional coefficient identities;
  the $Q$-system condition restricts the general reconstruction problem.\\
\hline
Evans--Gannon \cite{EG2011}
& Solutions of Izumi's restricted equations and modular data of their doubles.
& Exact small-order results and numerical evidence at larger odd cyclic
  orders concern this $Q$-system family.\\
\hline
\raggedright Grossman--Snyder \cite{GS}
& \raggedright The third category $\mathcal H_3$ in the Haagerup Morita class.
& At $G=\Z/3\Z$, a distinct tensor category with the same fusion
  rules as the six-object even part.\\
\hline
Evans--Gannon \cite{EG}
& Leavitt-algebra reconstruction allowing both dimension roots without
  the additional $Q$-system equations.
& Includes nonunitary analogues of the Haagerup and Grossman--Snyder
  examples.\\
\hline
Huang--Lin: transparent $A_4$ \cite{HL}
& Trivial associators whenever an object is invertible; this gauge
  implies orientation-preserving tetrahedral invariance.
& The full transparent ansatz considered there; includes the
  additional $S_4$ locus.\\
\hline
Huang--Lin: transparent $S_4$ \cite{HL}
& Further requires tetrahedral reflection invariance without complex
  conjugation.
& A restriction on transparent solutions.\\
\hline
\end{tabular}
\caption{A comparison of various systems proposed in references.}
\end{table}
Huang and Lin match their strict-$A_4$ solutions with Izumi systems,
and call the $S_4$ examples Grossman--Snyder systems
\cite[Section~6]{HL}; their proposed correspondence for arbitrary $G$
is conjectural.

\subsection*{What was known}

A fusion ring is not a category. Existence for a given $G$ requires
solving the pentagon equations, and Izumi's endomorphism method turns
this into a finite system of polynomial equations in the coefficients
that define $\rho$ on the Cuntz generators. This approach has yielded
exact constructions and classifications for several specified groups.
Izumi established uniqueness for $\Z/3\Z$ and $\Z/5\Z$ within his
$Q$-system setting \cite{Izumi2}. Evans and Gannon found one subfactor
of Izumi type for $\Z/7\Z$, two for $\Z/9\Z$, and no solution of
those restricted equations for $\Z/3\Z\times\Z/3\Z$. They also
reported numerical evidence for cyclic orders $11,13,15,17,19$ and
proposed infinite families of twisted Haagerup--Izumi modular data
whose realizability was left open \cite{EG2011}. These are not
classification or nonexistence statements for all categories with the
corresponding fusion rules. Izumi's work on $3^n$ subfactors further
reduced the classification to polynomial equations under a
cohomological assumption automatic for odd-order groups, and produced
additional examples for even-order groups \cite{Izumi2018}.
Huang and Lin computed explicit $F$-symbols in a transparent gauge, in
which the associators involving an invertible object are trivial, and
classified transparent solutions for odd cyclic groups through order
nine, and through order fifteen under the additional $S_4$ condition
in the table \cite{HL}. On the structural side, Grossman and Snyder showed
that the Morita class of the Haagerup category contains exactly three
fusion categories \cite{GS}. Grossman and Izumi obtained general
partial formulas and explicit examples of Drinfeld-center modular data
for generalized Haagerup categories with $Q$-systems \cite{GI}.
Grossman, Izumi and Snyder related the Asaeda--Haagerup even parts by
higher Morita equivalence to an orbifold quotient of a generalized
Haagerup category \cite{GIS}, and classified certain
$\Z/2\Z$-graded extensions of generalized Haagerup categories
\cite{GIS2}.

Arithmetic constraints are also well developed. Morrison and Snyder
proved that a Haagerup even part cannot be defined over a cyclotomic
field \cite{MS}. Ostrik's Galois conditions on formal codegrees
\cite{OstrikFC} give necessary conditions for categorification, and
Zheng, Bao and Yu computed the Casimir numbers, irreducible ring
representations, and formal codegrees of the cyclic Haagerup--Izumi rings
for all cyclic orders; in particular, their formal codegrees satisfy
the pseudo-unitary inequality \cite{ZBY}. Meanwhile the Haagerup categories
have become test cases in physics, through lattice models and numerical
evidence for a Haagerup conformal field theory \cite{HLOTT,Vanhove},
and through an explicit construction of a gapless phase with Haagerup
symmetry \cite{Bottini}.

\subsection*{Nonunitary reconstruction}

The route taken here starts from the nonunitary side. Evans and Gannon
replaced the $C^*$-algebraic framework by endomorphisms of Leavitt
algebras and proved a reconstruction theorem \cite[Theorem~2(a)]{EG}:
for a finite abelian group $G$ of odd order, a matrix $\mathsf A$
that satisfies their Eqs.~(4.7)--(4.10), with their parameter
$\omega=1$ and a choice of root $\delta$ of
$\delta^2=n\delta+1$, yields a spherical fusion category with
Haagerup--Izumi fusion rules and categorical dimension $\delta$ for the
noninvertible simples. These are linear, quadratic and quartic
relations in the entries of $\mathsf A$. Cubic identities enter our
proof as an intermediate step toward the quartic identity. The
negative root $\delta=-1/d$ is allowed, and the coefficients constructed
below have a natural analytic description in this normalization.

\subsection*{The construction}

Fix odd $n\ge3$ and take the hyperbolic gamma function
$\gamma^{(2)}(z;\omega_1,\omega_2)$ in the normalization of
Belousov, Sarkissian and Spiridonov \cite{BSS}. Choose the periods
\[
 \omega_1=d+1,\qquad \omega_2=1+d^{-1},\qquad \omega_1-\omega_2=n .
\]
Sampling $F(z)=1/\gamma^{(2)}(\omega_2+z)$ at the integers, with the
signs and normalization in \eqref{foundation:coefficients}, gives
nonzero real numbers $W_a$ with $W_0=-1$ and
$W_aW_{-a}=-1/d$ for $a\ne0$. Form
$\mathsf B_{a,b}=W_aW_{b-a}/W_b$ for $b\ne0$, with the modified zeroth
column in \eqref{foundation:core}. The reconstruction matrix is
$\mathsf A=\tau^{-1}\mathsf B$, where $\tau=-1-d^{-1}$.
The integer period difference makes the gamma shift identities
compatible with indexing by $\Z/n\Z$. Together with
$\Omega:=\omega_1+\omega_2=\omega_1\omega_2$, the reflection identity
and the sample signs yield the reciprocal products needed for
$\delta=-1/d$.

The proof that this matrix satisfies the Evans--Gannon identities has
four steps, described in Section~\ref{sec:overview}. The quadratic
identities follow from a finite Fourier transform of each column and
from reflection. The cubic identities are the heart of the matter.
We show that a range of their Fourier coefficients vanishes by a
four-term contour relation proved below from the gamma shift identities.
The analytic inputs include the gamma reflection and shift identities
and the two-gamma integral evaluation of \cite{BSS}. The remaining cubics
are recovered algebraically, through permutation symmetries, polynomial
interpolation, and matrix inversion. The quartics follow by contraction.
Applying reconstruction with $\delta=-1/d$ gives a complex spherical
category with negative noninvertible dimension, the analogue of
Yang--Lee. Extending the nontrivial automorphism of $\Q(\sqrt{n^2+4})$
to an automorphism of $\C$ and reconstructing again gives a category with
dimensions $1$ and $d$, which is therefore pseudo-unitary. This is
Theorem~\ref{intro:main}.

\medskip
\noindent\textbf{Note added.}
T. Gannon has informed the author that joint work with A. Schopieray
and H. Yadav \cite{GSY1}, currently in preparation, will contain results
partially overlapping with those of this paper.

%% file: proof_revised/analytic_setup.tex
\section{The coefficients and the quadratic identities}

The construction has two parts: explicit real coefficients define the
matrix $\mathsf B$, and a finite Fourier transform proves its quadratic
identities.
Fix an odd integer $n\geq3$, put $G=\Z/n\Z$, and set
\[
 d=\frac{n+\sqrt{n^2+4}}2,\qquad \delta=-d^{-1},\qquad
 \tau=\delta-1,\qquad
 \omega_1=d+1,\qquad \omega_2=1+d^{-1}.
\]
The relations used below are
\[
 \delta^2=n\delta+1,\qquad \omega_1-\omega_2=n,\qquad
 \Omega:=\omega_1+\omega_2=\omega_1\omega_2,\qquad
 \omega_1/\omega_2=d,\qquad \tau=-\omega_2.
\]
Since $n^2<n^2+4<(n+1)^2$, the numbers $d$ and $\omega_2$ are
irrational. In particular,
$(\omega_1\Z+\omega_2\Z)\cap\Z=n\Z$: writing
$r\omega_1+s\omega_2=rn+(r+s)\omega_2$ shows that an integer value
requires $r+s=0$.

\subsection{Double-sine samples and the coefficient matrix}\label{coeff}

Let $\gamma(z)=\gamma^{(2)}(z;\omega_1,\omega_2)$ be the hyperbolic
gamma function in the normalization of \cite{BSS}. On
$0<\Re z<\Omega$ it is defined by the absolutely convergent integral
\[
 \gamma(z)=\exp\left\{-\int_0^\infty
 \left[\frac{\sinh((2z-\Omega)t)}
 {2\sinh(\omega_1t)\sinh(\omega_2t)}
 -\frac{2z-\Omega}{2\omega_1\omega_2t}\right]\frac{dt}{t}\right\},
\]
and is continued by its shift identities. The subtraction is kept
inside the integral to cancel the singularity at zero. The identities
we use, understood as identities of meromorphic functions, are
\begin{align}
 \gamma(z+\omega_1)&=2\sin(\pi z/\omega_2)\gamma(z),&
 \gamma(z+\omega_2)&=2\sin(\pi z/\omega_1)\gamma(z),
 \label{foundation:shifts}\\
 \gamma(z)\gamma(\Omega-z)&=1,&
 \Res_{z=0}\gamma(z)&=\frac{\sqrt\Omega}{2\pi}.
 \label{foundation:reflection}
\end{align}
Its poles are $-r\omega_1-s\omega_2$ and its zeros are
$\Omega+r\omega_1+s\omega_2$, for integers $r,s\geq0$.
They are simple because $\omega_1/\omega_2=d$ is irrational.
The integral gives positivity on $(0,\Omega)$, and the shifts give
reality on the real axis away from poles. Taking the first shift
identity at $z=0$ by a limit gives
$\gamma(\omega_1)=\sqrt\Omega/\omega_2=\sqrt d$; reflection gives
$\gamma(\omega_2)=1/\sqrt d$. Thus, for
$F(z)=1/\gamma(\omega_2+z)$,
\[
 F(0)=\sqrt d,\qquad F(z)F(n-z)=1,\qquad
 F(-a)F(a)=(-1)^{a+1}\quad(a\in\Z\setminus\{0\}).
\]
We have the following integer shift relation:
\begin{equation}\label{foundation:shift}
  F(a+n) = (-1)^{a+1}F(a)\quad(a\in\Z\setminus\{0\}).
\end{equation}
The excluded sample has the separate value $F(n)=1/\sqrt d$.
The poles of $F$ start at $\omega_1>n$ and its zeros at
$-\omega_2<-1$; hence the samples used to define the coefficients
are finite and nonzero.

Define the nonzero real coefficients
\begin{equation}\label{foundation:coefficients}
 W_0=-1,\qquad
 W_a=-(-1)^{a(a+n)/2}\frac{F(a)}{\sqrt d}
 \quad(1\leq a<n).
\end{equation}
Here the displayed integers specify the representatives of $G$.
In particular,
\begin{equation}\label{foundation:reciprocal-products}
 W_aW_{-a}=\delta\qquad(a\in G\setminus\{0\}).
\end{equation}
The matrix and its rescaling are
\begin{equation}\label{foundation:core}
 \mathsf B_{a,b}=
 \begin{cases}
  W_aW_{b-a}/W_b,&b\ne0,\\
  -1+\tau\ind{a=0},&b=0,
 \end{cases}
 \qquad \mathsf A_{a,b}=\tau^{-1}\mathsf B_{a,b}.
\end{equation}
Thus $\mathsf B_{0,0}=\delta-2$, and every other entry on either
axis or the diagonal equals $-1$. The reciprocal product identity
\eqref{foundation:reciprocal-products} also gives the order-three symmetry
\begin{equation}\label{foundation:rotation}
 \mathsf B_{a,b}=\mathsf B_{b-a,-a}.
\end{equation}

\emph{Bounds and contour values.} Since we will be working with various contour integrals containing multiple copies of hyperbolic gamma functions, we discuss the general ideas here and mention some subtleties involved.
The vertical asymptotics of \cite[Section~1]{BSS} give
\[
 \log|\gamma(x+it)|
 =\frac{\pi(x-\Omega/2)}{\omega_1\omega_2}|t|+O(1).
\]
In particular, for fixed real $c$,
\[
 \left|\frac{\gamma(x+it+c)}{\gamma(x+it)}\right|
 \leq C e^{\pi c|t|/(\omega_1\omega_2)}
 \quad(|t|\geq T).
\]
Both estimates are uniform for $x$ in a bounded interval.
Here $\omega_1\omega_2=\Omega$. The first estimate
also supplies bounds locally uniform in complex shifts: their real
and imaginary parts remain bounded on compact parameter sets.
Consequently a strict exponential decay margin for any gamma product
below persists in a parameter neighbourhood.

For a parameter-dependent integral with designated left and right pole
families, we work on open domains where opposite families do not meet
and the infinite tails have a locally uniform bound $Ce^{-c|\Im z|}$,
$c>0$. The pole families below are finite unions of translates of
one-sided period lattices. Locally one can keep a pole-free separating
contour fixed, agreeing with a vertical line outside a compact set.
The gamma factors are jointly holomorphic away from their poles;
compactness bounds the finite part of this contour and the exponential
estimate bounds its tails. Cauchy's estimates then justify
differentiation under the integral, giving holomorphy in the parameter.
Relative to an upward pole-free line, the separating
integral is the line integral plus $2\pi i$ times the residues of
left poles to its right, minus $2\pi i$ times the residues of right
poles to its left. These sums are finite. With the measure $dz/i$,
the correction factors are $2\pi$ instead.
Colliding poles of the same family are enclosed together by a fixed
loop; its integral remains holomorphic and represents the full sum
of their residues, including the residue at a merged pole. This
description also proves local holomorphy of the corrected line value.
If the chosen line meets a pole, move it and adjust the residue sums.

\subsection{The Fourier transform of a column of the coefficient matrix}
\begin{lemma}[Fourier transform]\label{foundation:column-fourier}
Write $u_p(a)=\mathsf B_{a,p}$. For $1\leq h<n$ and $k\in G$, define
\[
 \widehat u_h(k)=\sum_{a\in G}u_h(a)e^{-2\pi ika/n},
 \qquad
 \theta\equiv\frac{2\pi k}{n}+\pi(h-1)\pmod{2\pi},
 \quad -\pi\leq\theta\leq\pi.
\]
Then
\begin{equation}\label{fd:fourier}
 \widehat u_h(k)
 =-\omega_2 e^{-ih\theta/2}
   \gamma\left(\frac{\Omega+h}{2}-\frac{\Omega\theta}{2\pi}\right)
   \gamma\left(\frac{\Omega+h}{2}+\frac{\Omega\theta}{2\pi}\right).
\end{equation}
\end{lemma}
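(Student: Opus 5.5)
We compute $\widehat u_h(k)$ by recognising the column $u_h(a)=W_aW_{h-a}/W_h$ as $n$-periodic samples of a product of two gamma ratios, and then evaluating the finite Fourier sum through the Fourier-transform (two-gamma integral) evaluation of \cite{BSS}, used in reverse.

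\textbf{Step 1: analytic form of the column.} Write $W_a=\epsilon(a)F(a)/\sqrt d$ for $1\le a<n$, with sign $\epsilon(a)=-(-1)^{a(a+n)/2}$. The entries $a=0$ and $a=h$ equal $-1$, since $W_0=-1$. By the reflection identity $F(z)F(n-z)=1$ we have $1/F(h)=F(n-h)$. Hence
\[
u_h(a)=\pm\, \frac{F(a)F(h-a)F(n-h)}{d^{1/2}},
\]
after choosing representatives so that all three arguments lie in $(0,n)$, or else after using the shift relation \eqref{foundation:shift} to move them there. Each use of \eqref{foundation:shift} contributes a factor $(-1)^{a+1}$. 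I will collect all the signs and check that they combine into a character of the form $e^{i\pi(h-1)a}$ times a constant. This is exactly why the shifted frequency $\theta=2\pi k/n+\pi(h-1)$ appears. The quadratic exponent $a(a+n)/2$ should cancel between $W_a$ and $W_{h-a}$ up to a term linear in $a$. I must also check that the boundary values $a=0,h$ agree with the limit of the analytic formula, using $F(0)=\sqrt d$ and $F(n)=1/\sqrt d$.

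\textbf{Step 2: the sum as residues of a contour integral.} Consider the integrand
\[
\Phi(z)=F(z)F(h-z)\,e^{-i\theta z}\,g(z),
\]
where $g(z)=1/(e^{2\pi i z}-1)$, or the $\cot$-type kernel adapted to the sign character. Its poles at the integers pick out the samples. Alternatively, and more cleanly, I will start from the BSS integral
\[
\int \gamma(\alpha+z)\gamma(\beta-z)e^{2\pi i\lambda z}\,dz ,
\]
which evaluates to a product of two gammas times an exponential. Shifting the contour by the integer period $n$ (the quantity $\omega_1-\omega_2$), and using the shift identities \eqref{foundation:shifts} together with \eqref{foundation:shift}, should turn the integral over one period strip into the finite sum $\sum_{a\in G}$. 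The key point is that $(\omega_1\Z+\omega_2\Z)\cap\Z=n\Z$, so exactly the integer poles $0,\dots,n-1$ of the kernel lie in a strip and no lattice poles collide with them. Matching parameters, the $\gamma$ arguments $(\Omega+h)/2\pm\Omega\theta/(2\pi)$ arise because $\theta/2\pi$ plays the role of the Fourier variable, rescaled by $\Omega=\omega_1\omega_2$. The phase $e^{-ih\theta/2}$ comes from recentring at $z=h/2$.

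\textbf{Step 3: constants.} The prefactor $-\omega_2=\tau$ should come from combining three things: $\Res_{z=0}\gamma=\sqrt\Omega/(2\pi)$, the factor $d^{-1/2}$, and $F(n-h)$, the latter cancelled via reflection against part of the evaluation. The sign is fixed by testing $h=1$ at $\theta=0$ against a direct evaluation.

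\textbf{Main obstacle.} The hard step is Step 2: choosing the right periodisation kernel and justifying the contour shift. This requires the vertical decay estimates stated above, which need $|\theta|<\pi$ so that the decay margin $\pi(\ldots)|t|/\Omega$ is strictly negative; the endpoints $\theta=\pm\pi$ then follow by continuity. It also requires confirming that the sign character from Step 1 is exactly absorbed by the $\pi(h-1)$ shift. I would first try the direct route: write the BSS Fourier integral, subtract its translate by $n$, and read off the residues at the integers.
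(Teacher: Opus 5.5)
\textbf{There is a genuine gap: the interpolant you commit to reaches only $h+1$ of the $n$ frequencies.} Your periodisation mechanism is the right one: a cosecant-type kernel times an analytic interpolant of the column, compared with its $n$-translate, then evaluated by the two-gamma integral of \cite{BSS}. The trouble is the interpolant $F(z)F(h-z)$ in Step~2.

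On the samples one has
\[
u_h(a)=-(-1)^{(h+1)a}F(n-h)F(a)F(h-a)/\sqrt d \qquad (0\le a<n).
\]
Since $F(z)F(h-z)=\gamma(\omega_1-z)\gamma(\omega_1-h+z)$, the vertical asymptotics give $|F(z)F(h-z)|\asymp e^{\pi(n-h)|\Im z|/\Omega}$, so this interpolant grows.

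Against the residues $(-1)^a$ of $\pi/\sin\pi z$, the exponential must be $e^{-i\theta'z}$ with $\theta'\equiv\theta+\pi$. Your alternative kernel gives nothing new, because $1/(e^{2\pi iz}-1)=e^{-\pi iz}/(2i\sin\pi z)$. The $n$-difference of the kernel is then a multiple of $e^{-i\theta'z}\gamma(-z)\gamma(z+n-h)$.

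The horizontal sides vanish, and this integral converges, only when $|\theta'|<\pi(1-(n-h)/\Omega)$. No other representative helps, since the integer residues see only $\theta'\bmod 2\pi$. Nor can you continue in $\theta'$, because the rectangle identity itself uses $e^{-i\theta'n}=\pm1$.

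This is exactly the paper's negative-label column, \eqref{an:signed} and \eqref{an:signed-fourier} with $g=n-h$. It yields only $h+1$ of the $n$ coefficients. For $h=1$, the frequency $\theta=0$ you propose for your sign check is never among them. Your remark that the decay margin is negative for $|\theta|<\pi$ holds for the other interpolant, not this one.

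\textbf{The interpolant that works at every frequency.} Use $F(z)F(n+h-z)=F(z)/F(z-h)$. It decays like $e^{-\pi h|\Im z|/\Omega}$, so the kernel decays for all $|\theta|\le\pi$, endpoints included, with no continuity argument needed for the sum. But exactly the features your plan says are absent then appear, and handling them is the substance of the proof:
\begin{enumerate}
\item The interpolant disagrees with the column at $a=h$. It gives $1/d$ instead of $-1$, a correction $\omega_2\ind{a=h}$ as in \eqref{foundation:sample}.
\item The strip contains noninteger poles $h-\ell\omega_2$, $1\le\ell\le\lfloor h/\omega_2\rfloor$, coming from zeros of $F(z-h)$.
\item The difference integrand $\gamma(-z)\gamma(z-h)$ has its left pole $h$ to the right of its right pole $0$, so no vertical line separates the families. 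The BSS evaluation holds only on a separating contour, reached by continuation in $h$ from a region where a line does separate. Passing from the line to that contour costs residues $r_0,\dots,r_L$.
\end{enumerate}
The $r_\ell$ with $\ell\ge1$ cancel the noninteger poles of the rectangle. The residue $r_0$ cancels the diagonal correction, because $e^{-i\theta h}=e^{-2\pi ikh/n}$.

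Finally, the constant $-\omega_2$ cannot be fixed by a test case. It comes from $2\sin(\pi h/\omega_1)\gamma(-h)=(-1)^hF(h)$ together with $\sqrt\Omega/\sqrt d=\omega_2$.
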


\begin{proof}
We choose a kernel whose integer residues give the summands of
$\widehat u_h(k)$, up to a common normalization and one correction
at $a=h$. Set
\[
 R_h(z)=\frac{F(z)}{F(z-h)},\qquad
 K_h(z)=\frac{\pi e^{-i\theta z}R_h(z)}{\sin\pi z}.
\]
At the integer samples, including $a=0,h$, the coefficient formula gives
\begin{equation}\label{foundation:sample}
 u_h(a)+\omega_2\ind{a=h}
 =\frac{(-1)^{h(a+1)}}{\sqrt d\,F(h)}R_h(a)
 \qquad(0\leq a<n).
\end{equation}
The ratio $R_h$ is holomorphic at these samples, and the residue of
$\pi/\sin\pi z$ at $z=a$ is $(-1)^a$. Consequently
\[
 \Res_{z=a}K_h(z)=(-1)^a e^{-i\theta a}R_h(a).
\]
Put $c_h=(-1)^h/(\sqrt d\,F(h))$. Since the choice of $\theta$ gives
$e^{-i\theta a}=(-1)^{a(h-1)}e^{-2\pi ika/n}$, equation
\eqref{foundation:sample} therefore gives
\[
 c_h\Res_{z=a}K_h(z)
 =\bigl(u_h(a)+\omega_2\ind{a=h}\bigr)e^{-2\pi ika/n}.
\]
Summing over the representatives $a=0,\ldots,n-1$ yields
\[
 \widehat u_h(k)
 =c_h\sum_{a=0}^{n-1}\Res_{z=a}K_h(z)
       -\omega_2e^{-2\pi ikh/n}.
\]
It remains to evaluate this integer residue sum.

\emph{The difference of the kernels.}
Put $\alpha=1/\omega_1$ and $\beta=1/\omega_2$, so
$\alpha+\beta=1$. Applying the shift identities and
\[
 (-1)^h\sin(\pi\alpha z)\sin(\pi\beta(z-h))
 -\sin(\pi\beta z)\sin(\pi\alpha(z-h))
 =\sin(\pi\alpha h)\sin\pi z
\]
gives
\begin{equation}\label{foundation:kernel-difference}
 K_h(z+n)-K_h(z)
 =-4\pi\sin(\pi h/\omega_1)e^{-i\theta z}\gamma(-z)\gamma(z-h).
\end{equation}

\emph{The residue rectangle.}
Let $L=\lfloor h/\omega_2\rfloor$ and choose
$\sigma=-\varepsilon$, where
$0<\varepsilon<\min\{1,(L+1)\omega_2-h\}$.
In the strip $\sigma<\Re z<\sigma+n$ (Figure~\ref{fig:contour}),
the poles of $K_h$ are the integers $0,\ldots,n-1$ from the cosecant
and $p_\ell=h-\ell\omega_2$ for $1\leq\ell\leq L$ from the
denominator of $R_h$.
The first pole of $F$ is at $\omega_1>n$, so the numerator of $R_h$ contributes
none. Irrationality of $\omega_2$ excludes coincident poles, and
$K_h(z+n)$ is regular at the additional poles $p_\ell$.
Indeed $p_\ell+n$ is not an integer. A numerator pole there would put
$h$ in $(\omega_1\Z+\omega_2\Z)\cap\Z=n\Z$, contrary to $0<h<n$.
A denominator zero would give
$n-\ell\omega_2=-r\omega_1-(s+1)\omega_2$, or
$(r+1)n=(\ell-r-s-1)\omega_2$, for $r,s\geq0$; irrationality makes
this impossible.
\begin{figure}
  \centering
  \includegraphics[width=0.7\linewidth]{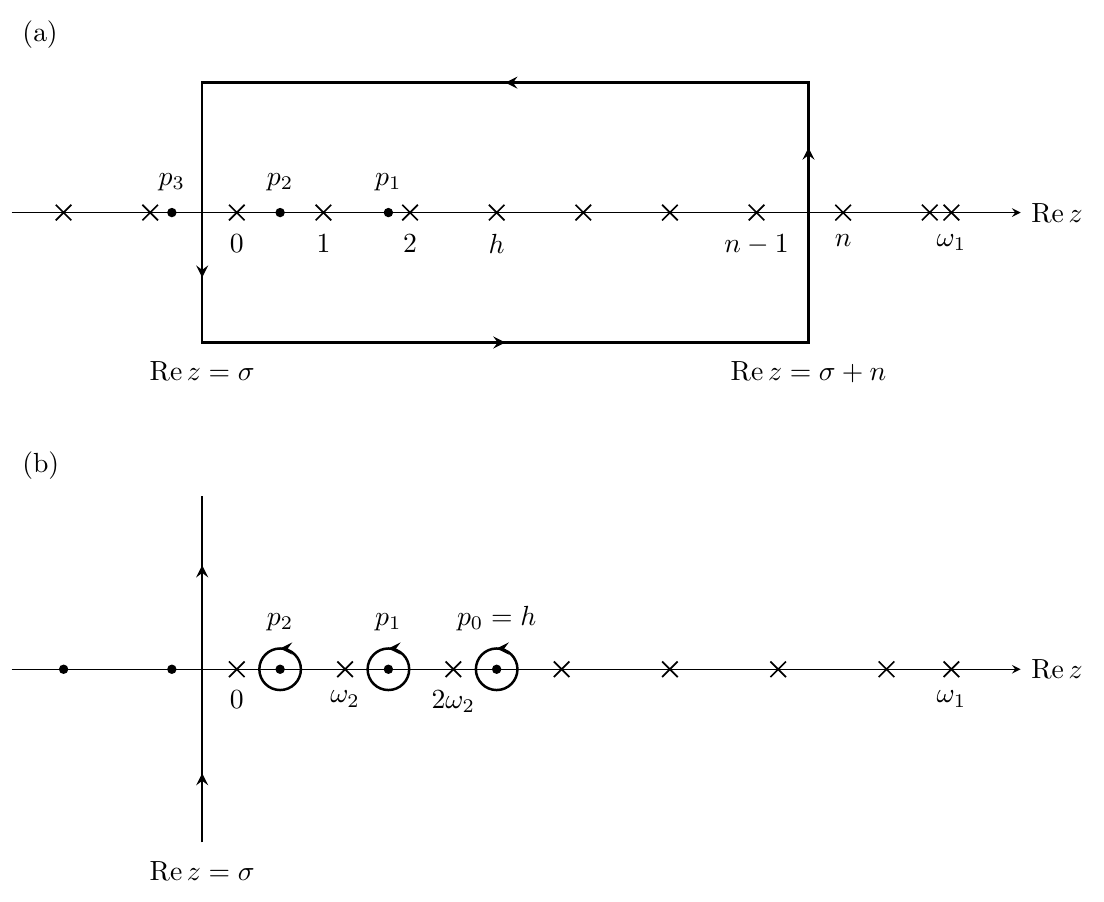}
  \caption{Contours for \eqref{foundation:rectangle}: (a) the residue rectangle, whose vertical sides are compared using \eqref{foundation:kernel-difference}; (b) the contour for $I_{\rm sep}$.}
  \label{fig:contour}
\end{figure}

The vertical ratio bound gives
\[
 |R_h(x+it)|=O(e^{-\pi h|t|/\Omega})
\]
uniformly on the closed strip. Since
$|\sin\pi(x+it)|^{-1}=O(e^{-\pi|t|})$ and
$|e^{-i\theta(x+it)}|\leq e^{\pi|t|}$, the same decay bound holds
for $K_h$. Hence its horizontal integrals vanish and its vertical
integrals converge, even at $|\theta|=\pi$.

Write $p_0=h$ and set
\[
 r_\ell=\Res_{z=p_\ell}
       e^{-i\theta z}\gamma(-z)\gamma(z-h),\qquad
 I_\sigma=\int_{\sigma-i\infty}^{\sigma+i\infty}
       e^{-i\theta z}\gamma(-z)\gamma(z-h)\frac{dz}{i}.
\]
Reflection and the ratio bound give
\[
 |\gamma(-x-it)\gamma(x+it-h)|
 =\left|\frac{\gamma(x+it-h)}{\gamma(x+it+\Omega)}\right|
 =O(e^{-\pi(1+h/\Omega)|t|}).
\]
Thus $I_\sigma$ converges uniformly for $|\theta|\leq\pi$.
The pole $p_0=h$ of the two-gamma integrand is already among the
integer poles of $K_h$; $R_h$ itself is regular there.
The residue theorem and \eqref{foundation:kernel-difference} yield
\begin{equation}\label{foundation:rectangle}
 \sum_{a=0}^{n-1}\Res_{z=a}K_h(z)
 =-2\sin(\pi h/\omega_1)I_\sigma
  -4\pi\sin(\pi h/\omega_1)\sum_{\ell=1}^{L}r_\ell.
\end{equation}

\emph{The integral evaluation and cancellation.}
Choose an upward separating contour, agreeing with the line
$\Re z=\sigma$ outside a compact set, which puts the poles of
$\gamma(z-h)$ on its left and those of $\gamma(-z)$ on its right
(Figure~\ref{fig:contoursep}). Write $I_{\rm sep}$ for its integral
with integrand $e^{-i\theta z}\gamma(-z)\gamma(z-h)$ and measure
$dz/i$. The two-gamma Fourier integral \cite[Eq.~(2.17)]{BSS}, with shifts
$-h,0$ and parameter
$\alpha_{\rm BSS}=(\Omega+h-\Omega\theta/\pi)/2$, gives
\begin{align}
 I_{\rm sep}
 &=\sqrt\Omega\,e^{-ih\theta/2}\gamma(-h)
   \gamma\left(\frac{\Omega+h}{2}-\frac{\Omega\theta}{2\pi}\right)
   \gamma\left(\frac{\Omega+h}{2}+\frac{\Omega\theta}{2\pi}\right),
   \label{foundation:beta}\\
 I_{\rm sep}&=I_\sigma+2\pi\sum_{\ell=0}^{L}r_\ell.
   \label{foundation:continued-contour}
\end{align}
\begin{figure}
  \centering
  \includegraphics[width=0.7\linewidth]{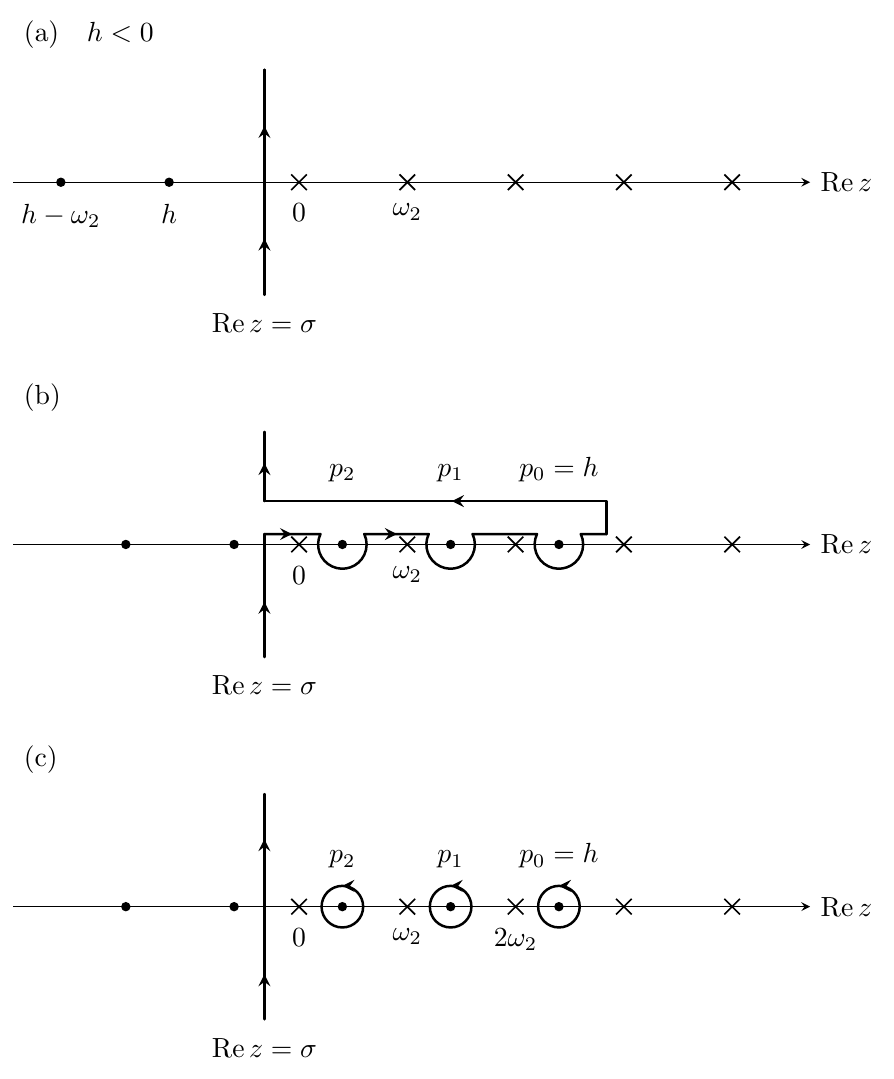}
  \caption{The separating contour as we vary $h$.}
  \label{fig:contoursep}
\end{figure}
To justify the continuation, first fix $|\theta|<\pi$ and replace
$h$ by a complex parameter $h'$. Its domain is the connected open set
\[
 U_\theta=\{h'\in\C:\Re h'>\Omega(|\theta|/\pi-1)\}
 \setminus(\omega_1\Z_{\geq0}+\omega_2\Z_{\geq0}).
\]
The inequality is exactly the strict decay condition for the
two-gamma integrand; the deleted discrete set is exactly the set of
opposite-pole collisions. Choose a real
$h_0\in(\max\{-1,\Omega(|\theta|/\pi-1)\},0)$ and a line
$h_0<\sigma_0<0$, independently of the final line $\sigma$.
Near $h_0$ this line separates the families and the BSS formula
applies directly. The contour prescription above gives a holomorphic
integral on $U_\theta$; its displayed gamma evaluation is also
holomorphic there, since $-h'$ avoids the pole lattice and the other
two arguments have positive real parts. The identity theorem proves
\eqref{foundation:beta} throughout $U_\theta$, in particular at
$h$: an integer $0<h<n$ cannot lie in the period lattice.
At this parameter the left poles to the right of $\sigma$ are exactly
$p_0,\ldots,p_L$, and there are no crossed right poles. The residue
prescription therefore gives \eqref{foundation:continued-contour}.
Finally let $\theta\to\pm\pi$ at fixed $h>0$. The bound for
$I_\sigma$ is uniform, the finitely many residue terms are continuous,
and the gamma arguments at the endpoints, $h/2$ and $\Omega+h/2$,
are positive. This proves both formulas at the endpoint frequencies.

Substituting \eqref{foundation:continued-contour} into
\eqref{foundation:rectangle} cancels all residues $r_\ell$ with
$\ell\geq1$ and gives
\[
 \sum_{a=0}^{n-1}\Res_{z=a}K_h(z)
 =-2\sin(\pi h/\omega_1)I_{\rm sep}
       +4\pi\sin(\pi h/\omega_1)r_0.
\]
The remaining residue and normalization are
\[
 r_0=\frac{\sqrt\Omega}{2\pi}e^{-i\theta h}\gamma(-h),\qquad
 2\sin(\pi h/\omega_1)\gamma(-h)=(-1)^hF(h),\qquad
 \frac{\sqrt\Omega}{\sqrt d}=\omega_2.
\]
Returning to the residue formula for $\widehat u_h(k)$ at the start
of the proof, and using \eqref{foundation:beta}, we obtain
\begin{align*}
 \widehat u_h(k)
 &=-2c_h\sin(\pi h/\omega_1)I_{\rm sep}
     +4\pi c_h\sin(\pi h/\omega_1)r_0-\omega_2e^{-2\pi ikh/n}\\
 &=-\omega_2e^{-ih\theta/2}
   \gamma\left(\frac{\Omega+h}{2}-\frac{\Omega\theta}{2\pi}\right)
   \gamma\left(\frac{\Omega+h}{2}+\frac{\Omega\theta}{2\pi}\right)\\
 &\qquad+\omega_2\bigl(e^{-i\theta h}-e^{-2\pi ikh/n}\bigr).
\end{align*}
Since $h(h-1)$ is even, the frequency congruence implies
$e^{-i\theta h}=e^{-2\pi ikh/n}$. Thus the $r_0$ contribution cancels
the diagonal sample correction, leaving exactly the stated Fourier
transform \eqref{fd:fourier}.
\end{proof}

\subsection{Fourier inversion gives the quadratic identities}

\begin{lemma}[Reciprocal Fourier products]\label{foundation:fourier-products}
For $h\in G\setminus\{0\}$ and $k\in G$,
\begin{equation}\label{foundation:spectral-inverse}
 \widehat u_h(k)\widehat u_{n-h}(-k)
 =\omega_2^2e^{-2\pi ikh/n}.
\end{equation}
\end{lemma}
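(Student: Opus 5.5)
The plan is to apply Lemma~\ref{foundation:column-fourier} twice, once to the column $h$ at frequency $k$ and once to the column $n-h$ at frequency $-k$ (both with representatives in $1\le h,\,n-h<n$), and to multiply the two closed forms \eqref{fd:fourier}. The prefactors give $\omega_2^2$ at once. What remains is to show that the exponential factors combined with the four gamma values reduce to the phase $e^{-2\pi ikh/n}$.

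\emph{Step 1: relate the two frequencies.} Let $\theta$ be the frequency attached to $(h,k)$ and $\theta'$ the one attached to $(n-h,-k)$. Because $n$ is odd, $n-h-1\equiv h\pmod 2$. Hence
\[
 \theta'\equiv-\frac{2\pi k}{n}+\pi(n-h-1)\equiv-\theta+\pi(n-2)\equiv\pi-\theta\pmod{2\pi}.
\]
Within $[-\pi,\pi]$ this gives $\theta'=\pi-\theta$ when $\theta\ge0$ and $\theta'=-\pi-\theta$ when $\theta<0$. I will treat the case $\theta\ge0$ in detail. The case $\theta<0$ is the mirror image: replace $\theta$ by $-\theta$, or equivalently swap the roles of the two gamma arguments. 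The endpoint $\theta=0$, where $\theta'=\pi$, is covered by the same computation, since the lemma already holds at the endpoint frequencies.

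\emph{Step 2: pair the gamma factors.} Put $x=\Omega\theta/(2\pi)$. The second column contributes the arguments $(n-h)/2+x$ and $\Omega+(n-h)/2-x$. Using $\Omega+n=2\omega_1$, I pair them with the arguments of the first column as follows:
\[
 \Bigl(\tfrac{\Omega+h}{2}-x\Bigr)+\Bigl(\tfrac{n-h}{2}+x\Bigr)=\omega_1,\qquad
 \Bigl(\tfrac{\Omega+h}{2}+x\Bigr)+\Bigl(\Omega+\tfrac{n-h}{2}-x\Bigr)=\Omega+\omega_1.
\]
For the first pair, reflection \eqref{foundation:reflection} followed by the $\omega_2$-shift \eqref{foundation:shifts} gives
\[
 \gamma(z)\gamma(\omega_1-z)=\frac{\gamma(z)}{\gamma(z+\omega_2)}=\frac{1}{2\sin(\pi z/\omega_1)}.
\]
For the second pair, reflection followed by the $\omega_1$-shift gives
\[
 \gamma(w)\gamma(\Omega+\omega_1-w)=\frac{\gamma(w)}{\gamma(w-\omega_1)}=2\sin\bigl(\pi(w-\omega_1)/\omega_2\bigr).
\]
The gamma product is therefore a ratio of two sines whose arguments are affine in $x$. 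Here $z=(\Omega+h)/2-x$ and $w=(\Omega+h)/2+x$.

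\emph{Step 3: simplify the sine ratio.} Using $1/\omega_1+1/\omega_2=1$, $\omega_1-\omega_2=n$ and $\Omega/\omega_1=\omega_2$, I will rewrite both sine arguments in terms of $\pi h$, $\theta$ and half-integer multiples of $\pi$. The aim is to show that the ratio equals a pure phase of the form $e^{i(\cdots)\theta}$ times a sign. That phase must then combine with $e^{-ih\theta/2}e^{-i(n-h)\theta'/2}=e^{-i(n-h)\pi/2}e^{-i(2h-n)\theta/2}$ to give $e^{-i\theta h}$. By the observation at the end of the proof of Lemma~\ref{foundation:column-fourier}, $e^{-i\theta h}=e^{-2\pi ikh/n}$, which finishes the argument.

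I expect this trigonometric reduction to be the main obstacle. A ratio of two real sines is real, so it cannot produce an $x$-dependent phase. The correct matching is therefore probably not the one written above. I would first try the alternative pairing $\gamma\bigl(\tfrac{\Omega+h}{2}-x\bigr)$ with $\gamma\bigl(\Omega+\tfrac{n-h}{2}-x\bigr)$, whose arguments differ by the constant $\omega_1$, so that a single shift identity turns the product into a squared gamma or a sine. Another option is to use the complementary-angle bookkeeping in which $\theta'=\pi-\theta$ turns $x$ into $\Omega/2-x$. If neither works, I would fall back on a more indirect route. Both sides are analytic in $\theta$ after continuing the lemma in the frequency, so it suffices to check the identity as meromorphic functions using the quasi-periodicity from \eqref{foundation:shifts}, together with the special values $\gamma(\omega_1)=\sqrt d$ and $\gamma(\omega_2)=1/\sqrt d$ to fix the constant.
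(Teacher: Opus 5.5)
\paragraph{There is a genuine gap in Step~3.} Your Steps~1 and~2 are correct and are exactly the paper's computation. The paper uses the same four arguments, the same pairing with sums $\omega_1$ and $\Omega+\omega_1$, and the same reflection-plus-one-shift reduction to a ratio of sines. In Step~3 you talk yourself out of this pairing with an incorrect argument.

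The lemma is only asserted at the discrete frequencies. There, the factor you need, $e^{i(n-h)\pi/2}e^{-in\theta/2}$, is not an $x$-dependent phase. Write $\theta=2\pi k/n+\pi(h-1)+2\pi m$ with $m\in\Z$ chosen so that $\theta$ is principal. Then $e^{-in\theta/2}=(-1)^{k+nm}e^{-i\pi n(h-1)/2}$, and since $n$ is odd the whole factor is $\pm1$. A real sine ratio is therefore exactly what is wanted.

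The missing idea is the integrality that makes the ratio a sign. Take $z=(\Omega+h)/2-x$ and $w=(\Omega+h)/2+x$. The relations $\omega_1^{-1}+\omega_2^{-1}=1$, $\Omega=\omega_1\omega_2$, $\omega_1-\omega_2=n$ and $d-d^{-1}=n$ give
\[
 \frac{z}{\omega_1}+\frac{w-\omega_1}{\omega_2}
 =\frac{h-n+2}{2}+\frac{n\theta}{2\pi}=:N .
\]
Here $N=k+nm+1-n+h(n+1)/2$ is an integer at every admissible $\theta$. Hence $\sin(\pi(w-\omega_1)/\omega_2)=(-1)^{N+1}\sin(\pi z/\omega_1)$. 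Writing $(-1)^{N+1}=e^{-i\pi(N+1)}$ then gives $e^{-i(n-h)\pi/2}e^{in\theta/2}(-1)^{N+1}=e^{-2\pi i}=1$. So the product of the two closed forms is $\omega_2^2e^{-i\theta h}=\omega_2^2e^{-2\pi ikh/n}$.

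You must also check that the denominator is nonzero. For $0\le\theta\le\pi$ one has $0<h/(2\omega_1)\le z/\omega_1\le(\Omega+h)/(2\omega_1)<1$, because $\Omega+n=2\omega_1$ and $h<n$. The paper runs this argument after reducing, by $h\leftrightarrow n-h$ and complex conjugation, to odd $h$ and $0\le k\le(n-1)/2$. There $x=\Omega k/n$ and the sign is $(-1)^{(h-n)/2+k}$. Your split into $\theta\ge0$ and $\theta<0$ works equally well once the integrality of $N$ is used.

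\paragraph{Your fallback routes would not rescue the argument.} In the alternative pairing, the arguments differ by $\omega_1-h$, not $\omega_1$. An integer offset $h$ is not a combination of the periods, so no shift identity applies.

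Continuing in $\theta$ cannot work either. For generic real $\theta$, the product of the two closed forms is $\omega_2^2e^{-i(n-h)\pi/2}e^{i(n-2h)\theta/2}$ times a real function. That cannot equal $\omega_2^2e^{-i\theta h}$ unless $e^{i(n-h)\pi/2}e^{-in\theta/2}$ is real, which happens only on a discrete set of $\theta$. This is what your own ``real sines'' observation actually shows. The identity is intrinsically a statement at the discrete frequencies, and the integrality of $N$ is exactly where that discreteness enters.
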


\begin{proof}
This follows from \eqref{fd:fourier} by reflection and one period
shift; we include the sign calculation. Swapping $h,n-h$ and taking
complex conjugates reduces to odd $h$ and
$0\leq k\leq(n-1)/2$. With $\lambda_k=\Omega k/n$, the four gamma
arguments are
\[
 x_\pm=\frac{\Omega+h}{2}\pm\lambda_k,\qquad
 y_1=\Omega+\frac{n-h}{2}-\lambda_k,\qquad
 y_2=\frac{n-h}{2}+\lambda_k.
\]
Since $x_++y_1=\Omega+\omega_1$ and $x_-+y_2=\omega_1$,
their product reduces to
\[
 \frac{\sin\!\left(\pi[(h-n)/(2\omega_2)+\omega_1k/n]\right)}
      {\sin\!\left(\pi[(\Omega+h)/(2\omega_1)-\omega_2k/n]\right)}
 =(-1)^{(h-n)/2+k}.
\]
The two sine arguments divided by $\pi$ sum to the integer
$(h-n+2)/2+k$. The denominator argument divided by $\pi$ is
\[
 B=\frac{h}{2\omega_1}+\omega_2\left(\frac12-\frac{k}{n}\right)>0,
 \qquad B\leq\frac{\Omega+h}{2\omega_1}<1,
\]
so the division is legitimate. Combining the resulting sign with the
exponential factors
proves \eqref{foundation:spectral-inverse}.
\end{proof}

\begin{proposition}[Quadratic identities]\label{foundation:quadratics}
The matrix \eqref{foundation:core} satisfies
\begin{equation}\label{foundation:Q}
 \sum_{a\in G}\mathsf A_{g+a,h}\mathsf A_{h,a}
 =\ind{g=0}-\delta^{-1}\ind{h=0}
 \qquad(g,h\in G).
\end{equation}
\end{proposition}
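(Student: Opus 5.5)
The plan is to multiply \eqref{foundation:Q} by $\tau^2$, since $\mathsf A=\tau^{-1}\mathsf B$. This turns the claim into
\[
 \sum_{a\in G}\mathsf B_{g+a,h}\mathsf B_{h,a}=\tau^2\ind{g=0}-\tau^2\delta^{-1}\ind{h=0},
\]
which I will prove separately for $h\neq0$ and $h=0$. Recall that $\tau^2=\omega_2^2$ because $\tau=-\omega_2$.

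\emph{Case $h\neq0$.} The first factor is the column $u_h(g+a)$. For the second factor I use the order-three symmetry \eqref{foundation:rotation} with $(a,b)\mapsto(h,a)$, which gives $\mathsf B_{h,a}=\mathsf B_{a-h,-h}=u_{n-h}(a-h)$.
\begin{itemize}
\item Because \eqref{foundation:rotation} involves the modified zeroth column, I will check the case $a=0$ by hand. There $\mathsf B_{h,0}=-1$ and $\mathsf B_{-h,-h}=-1$, since both are off-origin entries on an axis or the diagonal.
\item Substituting $b=a-h$, the sum becomes the correlation $\sum_b u_h(g+h+b)\,u_{n-h}(b)$.
\item Finite Fourier inversion gives
\[
 \sum_b f(c+b)\,g(b)=\frac1n\sum_k\widehat f(k)\,\widehat g(-k)\,e^{2\pi ikc/n}.
\]
\item Lemma~\ref{foundation:fourier-products} replaces $\widehat u_h(k)\widehat u_{n-h}(-k)$ by $\omega_2^2e^{-2\pi ikh/n}$.
\item The sum therefore collapses to $\omega_2^2\cdot\frac1n\sum_k e^{2\pi ikg/n}=\omega_2^2\ind{g=0}$, which is exactly $\tau^2\ind{g=0}$.
\end{itemize}

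\emph{Case $h=0$.} Everything is explicit here.
\begin{itemize}
\item From \eqref{foundation:core}, $\mathsf B_{g+a,0}=-1+\tau\ind{a=-g}$.
\item Also $\mathsf B_{0,a}=W_0W_a/W_a=-1$ for $a\neq0$, and $\mathsf B_{0,0}=\delta-2$.
\item The left side is therefore $-\sum_a\mathsf B_{0,a}+\tau\mathsf B_{0,-g}$, with $\sum_a\mathsf B_{0,a}=\delta-n-1$.
\end{itemize}
For $g\neq0$ the left side equals $n+2-2\delta$. Using $n\delta=\delta^2-1$, this is $-(\delta-1)^2/\delta=-\tau^2\delta^{-1}$, as required.

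For $g=0$ the left side equals $n+1-\delta+(\delta-1)(\delta-2)=\delta^2-4\delta+n+3$. It must be checked against $\tau^2(1-\delta^{-1})=(\delta-1)^3/\delta$. Multiplying by $\delta$ and using $n\delta=\delta^2-1$, the left side becomes $\delta^3-4\delta^2+3\delta+\delta^2-1=\delta^3-3\delta^2+3\delta-1=(\delta-1)^3$, which matches.

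The main obstacle is bookkeeping rather than analysis. The Fourier sign conventions must match Lemma~\ref{foundation:column-fourier} exactly, so that the phase $e^{-2\pi ikh/n}$ of \eqref{foundation:spectral-inverse} cancels the shift by $h$ in the correlation. I will also verify that $u_{n-h}$, as indexed by Lemma~\ref{foundation:fourier-products} with $h\in G\setminus\{0\}$, is the column $-h$ that appears after rotation.
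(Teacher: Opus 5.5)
Your proposal is correct and follows the paper's proof: for $h\neq0$ you use the order-three symmetry $\mathsf B_{h,a}=\mathsf B_{a-h,-h}$, Fourier inversion of the correlation, and Lemma~\ref{foundation:fourier-products}. For $h=0$ you compute directly with $\delta^2=n\delta+1$; your split into $g=0$ and $g\neq0$ is just the paper's single expression $n-2\tau+\tau^2\ind{g=0}$ written out case by case.
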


\begin{proof}
For $h\ne0$, the symmetry \eqref{foundation:rotation} gives
$\mathsf B_{h,a}=\mathsf B_{a-h,-h}$.
The Fourier transform in $g$ of the unscaled left side is therefore
\[
 \widehat u_h(k)e^{2\pi ikh/n}
 \widehat u_{-h}(-k)=\omega_2^2=\tau^2.
\]
Fourier inversion gives $\tau^2\ind{g=0}$, as required.
For $h=0$, write $\mathsf B_{a,0}=\mathsf B_{0,a}
=-1+\tau\ind{a=0}$. The sum is
\[
 n-2\tau+\tau^2\ind{g=0}
 =\tau^2\bigl(\ind{g=0}-\delta^{-1}\bigr),
\]
where the last equality uses $\delta^2=n\delta+1$.
Dividing by $\tau^2$ proves the claim.
\end{proof}

%% file: proof_revised/equations.tex
\section{The remaining equations}
\label{sec:targets}

We now state the algebraic target for the contour calculation.
The coefficient formula \eqref{foundation:core} can also be written
\begin{equation}\label{tail:uniform-core}
 \mathsf B_{a,b}=\delta^{-1}W_aW_{b-a}W_{-b}
                  +\tau^2\delta^{-1}\ind{a=b=0}.
\end{equation}
It immediately gives the linear reconstruction identities
\begin{equation}\label{tail:linear}
 \mathsf A_{a,b}=\mathsf A_{-b,a-b},\qquad
 \mathsf A_{a,0}=\mathsf A_{0,a}=\ind{a=0}-\tau^{-1},
 \qquad \sum_a\mathsf A_{a,0}=-\delta^{-1}.
\end{equation}
Define the residuals
\begin{align}
 Q(g,h)&=\sum_m\mathsf A_{m+g,h}\mathsf A_{h,m}
                  -\ind{g=0}+\delta^{-1}\ind{h=0},\label{tail:Q}\\
 C(g,h,k,l)&=\sum_m\mathsf A_{m,g+h}\mathsf A_{g,m+k}
                         \mathsf A_{h,m+l}
       -\mathsf A_{g+l,k}\mathsf A_{h+k,l}
       +\delta^{-1}\ind{g=h=0},\label{tail:C}\\
 \mathcal C_{g,h}(k,l)&=\tau^3C(g,h,k,l).\label{tail:scaledC}
\end{align}
Proposition~\ref{foundation:quadratics} says $Q=0$.
Parts~\ref{part:contours} and~\ref{part:completion} prove $C=0$;
Part~\ref{part:reconstruction} then derives the quartic equations.
The two normalizations $C$ and $\mathcal C$ express the same condition:
$C$ uses the reconstruction matrix $\mathsf A$, while $\mathcal C$
avoids denominators in calculations with $\mathsf B$.

%% file: proof_revised/contour_cubics.tex
\section{Fourier reduction of the cubic equations}
\label{an:products}

We first prove that many Fourier coefficients of the cubic residual
vanish.  Finite-dimensional algebra will then recover the remaining
coefficients.  Write $u_p(a)=\mathsf B_{a,p}$,
with indices in $G$, and set
\[
 \widehat u_p(k)=\sum_{a\in G}u_p(a)e^{-2\pi ika/n},\qquad
 S_{p,q,s}(k)=\sum_{a\in G}u_p(a)u_q(a+s)e^{-2\pi ika/n}.
\]
\subsection{Finite Fourier reduction}

For the residual \eqref{tail:scaledC} with $g,h,g+h\ne0$,
the order-three symmetry $\mathsf B_{g,a}=u_{-g}(a-g)$ gives
\begin{equation}
 \sum_k\mathcal C_{g,h}(k,l)e^{-2\pi ijk/n}
 =e^{-2\pi ijg/n}\left[
  \widehat u_{-g}(j)S_{g+h,-h,l-h}(-j)
       -\tau e^{-2\pi ijl/n}S_{-g-l,l,g+h+l}(j)\right].
\label{an:finite}
\end{equation}
Indeed, transform the first term in $k$ and substitute $a=k-g-l$
in the second.  Thus it suffices to relate two product transforms.

We use signed column labels.  For $-n<q<0$,
\begin{equation}
 u_q(a)=\frac{(-1)^{q(a+1)}}{\sqrt d\,F(q)}
                      \frac{F(a)}{F(a-q)},\qquad 0\le a<n.
 \label{an:signed}
\end{equation}
There is no sample correction: at $a=0$ the reciprocal product identity gives
$-1$, and at $a=n+q$ the identity $F(n)=1/\sqrt d$ gives $-1$.
The remaining samples follow from \eqref{foundation:coefficients} and the
integer sine shifts.

\subsection{A product transform with its sample corrections}

For arbitrary integer labels introduce
\begin{gather*}
 R_p(z)=\frac{F(z)}{F(z-p)},\qquad
 K(z)=\frac{\pi e^{-i\theta z}}{\sin\pi z}R_p(z)R_q(z+s),\\
 \theta\equiv\frac{2\pi k}{n}+\pi(p+q-1)\pmod{2\pi},\qquad
 \theta\in[-\pi,\pi],
\end{gather*}
and
\begin{align*}
 J_1(z)&=\gamma(-z)\gamma(z-p)
                   \gamma(\omega_2-z-s)\gamma(\omega_1+z+s-q),\\
 J_2(z)&=\gamma(\omega_1-z)\gamma(\omega_2+z-p)
                   \gamma(-z-s)\gamma(z+s-q).
\end{align*}
The sine-shift equations give the meromorphic identity
\begin{equation}
 K(z+n)-K(z)=-4\pi e^{-i\theta z}
 \left[(-1)^q\sin\frac{\pi p}{\omega_1}J_1(z)
       +(-1)^s\sin\frac{\pi q}{\omega_1}J_2(z)\right].
 \label{an:two-ratio}
\end{equation}
To see this, put $M_p=R_p(z+n)/R_p(z)$ and
$M_q=R_q(z+s+n)/R_q(z+s)$ and split the multiplier into
\[
 (-1)^{p+q}M_pM_q-1
 =\bigl((-1)^pM_p-1\bigr)(-1)^qM_q+
                                      \bigl((-1)^qM_q-1\bigr).
\]
Apply \eqref{foundation:kernel-difference}
to each bracket.  The remaining factors are
$R_q(z+s+n)=\gamma(\omega_2-z-s)\gamma(\omega_1+z+s-q)$ and
$R_p(z)=\gamma(\omega_1-z)\gamma(\omega_2+z-p)$; also
$\sin\pi(z+s)/\sin\pi z=(-1)^s$.

In the rest of this section assume
\begin{equation}
 g,h\ge1,\qquad p=g+h<n,\qquad 1\le l\le h,
 \qquad q=-h,\quad s=l-h.
 \label{an:ordered}
\end{equation}
For any labels define
\[
 c_0=\frac{(-1)^{p+q+qs}}{dF(p)F(q)},\qquad
 a_0=(-1)^q\sin\frac{\pi p}{\omega_1},\qquad
 b_0=(-1)^s\sin\frac{\pi q}{\omega_1},\qquad
 I_i=\int_{\mathcal C_i}e^{-i\theta z}J_i(z)
                                    \frac{dz}{i\sqrt\Omega},
\]
where $\mathcal C_i$ is the continued separating\footnote{An upward separating contour places the poles of each $\gamma(A_i+z)$ to its left and those of each $\gamma(B_j-z)$ to its right. A vertical line suffices when $\max_i(-\Re A_i)<\min_j\Re B_j$.}
contour for its gamma factors, with the residue prescription in the
gamma preliminaries.  Either endpoint representative is allowed when
$\theta\equiv\pi\pmod{2\pi}$: the integer residues and the weighted
combination in \eqref{an:product} agree, although the individual $I_i$
need not be periodic in $\theta$.

\begin{lemma}[Product transform]\label{an:product-formula}
For the labels in \eqref{an:ordered}, at every discrete frequency,
\begin{equation}
 S_{p,q,s}(k)=-2\sqrt\Omega\,c_0(a_0I_1+b_0I_2).
 \label{an:product}
\end{equation}
The same formula holds for
\[
 (p',q',s')=(-g-l,l,g+h+l)
\]
provided its principal frequency satisfies
\begin{equation}
 |\theta|<\pi(1-g/\Omega).
 \label{an:frequency}
\end{equation}
\end{lemma}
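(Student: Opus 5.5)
The plan is to repeat the proof of Lemma~\ref{foundation:column-fourier} with the two-ratio kernel $K$. First I will express the summands of $S_{p,q,s}(k)$ as integer residues of $K$. By \eqref{an:signed}, and by the analogous sample formula \eqref{foundation:sample} for positive labels, each sample $u_p(a)u_q(a+s)$ is a sign times $R_p(a)R_q(a+s)/(dF(p)F(q))$, corrected at finitely many diagonal positions. For positive $p$ the correction sits at $a=p$. For negative labels \eqref{an:signed} has no correction. The shift $a+s$ must be reduced into $[0,n)$ using \eqref{foundation:shift}, and this produces the sign $(-1)^{qs}$. The choice $\theta\equiv 2\pi k/n+\pi(p+q-1)$ absorbs the $a$-dependent signs. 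The result is
\[
S_{p,q,s}(k)=c_0\sum_{a=0}^{n-1}\Res_{z=a}K(z)+(\text{diagonal corrections}).
\]

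Next I integrate $K$ over the rectangle $\sigma<\Re z<\sigma+n$. The vertical ratio bound makes the horizontal sides vanish. By \eqref{an:two-ratio} the difference of the two vertical sides is $-2\sqrt\Omega\,(a_0\cdot\text{line integral of }J_1+b_0\cdot\text{line integral of }J_2)$, up to the $2\pi$ normalization. The rectangle also encloses noninteger poles of $K$, namely the poles of the denominators $F(z-p)$ and $F(z+s-q)$ that lie in the strip. Following \eqref{foundation:continued-contour}, I will convert each line integral into the separating integral $I_i$ by adding the residues of the crossed left poles and subtracting those of the crossed right poles. These are exactly the poles of $J_i$ that correspond to the noninteger poles of $K$, via the same kernel-difference bookkeeping. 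The two sets should cancel term by term, as the $r_\ell$ with $\ell\ge1$ did before. The leftover residues at integer points, the analogue of $r_0$, should cancel the diagonal sample corrections. As before, this uses $e^{-i\theta a}=e^{-2\pi i k a/n}\times$sign at the relevant integers.

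For the ordered labels \eqref{an:ordered} we have $p>0$, $q=-h<0$ and $s\le0$. In this case I will check directly that the set of crossed poles is finite and explicit. Here one can take $\mathcal C_i$ to be the vertical line deformed only around finitely many poles, so the prescription applies with no continuation in $\theta$. For the second triple $(p',q',s')$ the labels change sign and $s'$ is large, so the poles of $J_i$ and their crossings need to be handled by analytic continuation in the label, as in the proof of \eqref{foundation:beta}. I will make one label complex, keep the prescribed separating contour, and invoke the identity theorem. The convergence of $I_1,I_2$ needs strict exponential decay of $e^{-i\theta z}J_i$. The asymptotic $\log|\gamma(x+it)|=\pi(x-\Omega/2)|t|/\Omega+O(1)$ gives $J_i$ a decay rate of $\pi(1-g/\Omega)$ in the second case. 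This explains the frequency hypothesis \eqref{an:frequency}, which is needed there because no cosecant factor is left after taking the difference.

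I expect the main obstacle to be the bookkeeping in the middle step. It requires matching every noninteger pole of $K$ inside the strip with a crossed pole of exactly one of $J_1,J_2$, with the right sign after the weighting by $a_0$, $b_0$ and $(-1)^q M_q$. It also requires checking that integer poles of $J_i$ on the wrong side of $\sigma$ reproduce precisely the diagonal corrections. I would first try to organise this via the split $(-1)^{p+q}M_pM_q-1$ used for \eqref{an:two-ratio}. Each bracket is then a one-ratio situation already handled in Lemma~\ref{foundation:column-fourier}, multiplied by a factor that is regular at the relevant poles.
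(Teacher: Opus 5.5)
\textbf{The gap is in the second triple.} Your plan for the ordered triple \eqref{an:ordered} is the paper's argument. It samples on $0\le a<n$ with $\sigma=-\varepsilon$ and a single diagonal correction at $a=p$. The noninteger poles are $z_j=p-j\omega_2$ for $1\le j\le L=\lfloor p/\omega_2\rfloor$. The crossed poles are the left poles $z_0=p,z_1,\dots,z_L$ of $J_1$ and $z_1,\dots,z_L$ of $J_2$. Then \eqref{an:two-ratio} cancels the $z_j$ with $j\ge1$, and the residue of $J_1$ at $z_0=p$ cancels the diagonal correction.

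For $(p',q',s')=(-g-l,l,g+h+l)$, making a label complex and invoking the identity theorem does not work. Several ingredients make sense only for integer labels, and these are isolated points, so there is no open set on which the identity could first be established:
\begin{itemize}
\item the sum $S_{p',q',s'}(k)$ and the sample formulas;
\item the sign in $c_0$ and the principal frequency, which depends on the parity of $p+q$;
\item \eqref{an:two-ratio} itself, through $\sin\pi(z+s)/\sin\pi z=(-1)^s$ and the factors $(-1)^p,(-1)^q$.
\end{itemize}
In \eqref{foundation:beta} continuation worked because both sides were explicit meromorphic functions of $h'$: a two-gamma integral and its closed-form evaluation. The four-gamma integrals $I_1,I_2$ have no such evaluation. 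The only continuation the paper uses in this lemma is the deformation $a_j(\lambda)=(1-\lambda)\Sigma/4+\lambda a_j$ of the gamma parameters at fixed total sum. That merely defines the contours $\mathcal C_i$, with a no-pinch check at $\lambda=1$.

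Your generic bookkeeping also fails with the window $0\le a<n$. With $t:=s'=g+h+l$, poles of the numerator $F(z+t)$, at $\omega_1-t+r\omega_1+r'\omega_2$, fall inside the strip. Right poles of $J_1$ and $J_2$, coming from $\gamma(\omega_2-z-t)$ and $\gamma(-z-t)$ (e.g.\ the integer $-t$), lie left of $\sigma=-\varepsilon$. So the noninteger poles of $K$ are no longer just denominator zeros matched with crossed left poles.

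\textbf{The missing step.} Use the $n$-periodicity of the summand and sample on $a=-t,\dots,n-t-1$ with $\sigma=-t-\varepsilon$, so that $a+s'\in[0,n)$. Then the strip contains no numerator poles and no right pole of $J_1$ or $J_2$ is crossed. Put $t_1=g+l$ and $t_2=g+h$. The noninteger poles form two families, $-t_1-j\omega_2$ for $1\le j\le\lfloor h/\omega_2\rfloor$ and $-t_2-j\omega_2$ for $1\le j\le\lfloor l/\omega_2\rfloor$, and both cancel through \eqref{an:two-ratio}. In this window the raw formulas of both columns, including the negative one, carry corrections $\omega_2$ at $a=-t_1$ and $a=-t_2$. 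These are cancelled by the residues at the integer crossed left poles $-t_1$ of $J_1$ and $-t_2$ of $J_2$.

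\textbf{The case $h=l$.} This must be handled separately. Then $t_1=t_2$ and the two pole families merge into double poles. The two integer contributions at the common point are $-\omega_2e^{2\pi ikt_2/n}$, using the actual second column, and $\omega_2d^{-1}e^{2\pi ikt_2/n}$, using the raw first column. Their sum is $(d^{-2}-1)e^{2\pi ikt_2/n}$, exactly the discrepancy between the raw product $d^{-2}$ and the actual product $1$.

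\textbf{Where the frequency condition enters.} Condition \eqref{an:frequency} is needed not only for convergence of $I_1,I_2$. It is also needed for the horizontal sides of the rectangle, since $R_{p'}(z)R_{q'}(z+s')$ grows like $e^{\pi g|\Im z|/\Omega}$.
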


\begin{proof}
We first specify the continuation used below.  Write each $J_i$ as
$\gamma(A_1+z)\gamma(A_2+z)\gamma(B_1-z)\gamma(B_2-z)$.
Its total parameter sum is $\Sigma=\Omega-p-q$, equal to $\Omega-g$
for the first product and $\Omega+g$ for the second.  Deform all four
parameters $a_j$ by
\[
 a_j(\lambda)=(1-\lambda)\Sigma/4+\lambda a_j,\qquad \lambda\in\C.
\]
At $\lambda=0$ the imaginary axis separates the poles.  The total sum
stays fixed, so on bounded vertical strips the integrands have the
locally uniform tail bound
\[
 O\!\left(\exp\left\{
   \left(|\theta|-\frac{\pi(2\Omega-\Sigma)}{\Omega}\right)|\Im z|
                    \right\}\right).
\]
The exponent is negative in the stated frequency ranges.  Opposite
families can meet only when some $A_i(\lambda)+B_j(\lambda)$ belongs to
$-\omega_1\Z_{\ge0}-\omega_2\Z_{\ge0}$.  These exclude a locally
finite set of $\lambda$; a constant sum excludes none, since its value
at zero is $\Sigma/2>0$.  The complement is connected.  Once the
no-pinch checks below are made at $\lambda=1$, the fixed-line and
fixed-loop prescription therefore continues the initial integrals to
the required ones without leaving their convergence domain.

\emph{The first product.}
Let $L=\lfloor p/\omega_2\rfloor$ and choose
$\sigma=-\varepsilon$ with
$0<\varepsilon<\min(1,(L+1)\omega_2-p)$.
In $\sigma<\Re z<\sigma+n$ there are no numerator poles of $K$;
$F(z+s-q)=F(z+l)$ has no zero.  The noninteger poles are exactly
$z_j=p-j\omega_2$, $1\le j\le L$.  The shifted kernel $K(z+n)$ is
regular there.  Irrationality of $\omega_2$ excludes coincidences with
the other divisors.

To turn the vertical integrals into separating integrals one must
include the left poles $z_0=p,z_1,\ldots,z_L$ of $J_1$ and
$z_1,\ldots,z_L$ of $J_2$.  Their other left poles lie to the left
of $-l$ and their right poles are nonnegative.  The only possibly
negative opposite-parameter sums are $-p$ and $\omega_2-g-l$.
Neither is $-r\omega_1-t\omega_2$ for nonnegative integers $r,t$: since $\omega_1>n$,
such an equality would express a positive integer smaller than $n$
as a positive integer multiple of the irrational number $\omega_2$.
Thus the continued contours are nonsingular.

Write $r_{i,j}=\Res_{z=z_j}e^{-i\theta z}J_i(z)$.
Equation \eqref{an:two-ratio} implies, for $j\ge1$,
\[
 \Res_{z=z_j}K(z)=4\pi(a_0r_{1,j}+b_0r_{2,j}).
\]
The changes from vertical to separating integrals add respectively
\[
 \frac{2\pi}{\sqrt\Omega}\sum_{j=0}^Lr_{1,j}
 \qquad\text{and}\qquad
 \frac{2\pi}{\sqrt\Omega}\sum_{j=1}^Lr_{2,j}.
\]
The residue rectangle consequently gives
\begin{equation}
 \sum_{a=0}^{n-1}\Res_{z=a}K(z)
 =-2\sqrt\Omega(a_0I_1+b_0I_2)+4\pi a_0r_{1,0}.
 \label{an:rectangle}
\end{equation}
The horizontal boundaries vanish because
$R_p(z)R_q(z+s)=O(e^{-\pi g|\Im z|/\Omega})$ uniformly on the
finite strip and $|\theta|\le\pi$.

The negative column has no correction in this sample interval; the
positive column has only its diagonal correction.  Hence
\[
 S_{p,q,s}(k)=c_0\sum_a\Res_{z=a}K(z)
                    -\omega_2 u_q(p+s)e^{-2\pi ikp/n}.
\]
On the other hand,
\[
 r_{1,0}=\frac{\sqrt\Omega}{2\pi}e^{-i\theta p}
                 \gamma(-p)R_q(p+s+n),\qquad
 c_0\,4\pi a_0r_{1,0}=\omega_2 u_q(p+s)e^{-2\pi ikp/n}.
\]
Here one uses $2\sin(\pi p/\omega_1)\gamma(-p)=(-1)^pF(p)$ and,
at this integer sample, $R_q(p+s+n)=(-1)^qR_q(p+s)$.
This proves \eqref{an:product}.

\emph{The second product.}
Put $t_1=g+l$, $t_2=g+h$, $t=t_2+l$ and
sample the same periodic sum on $a=-t,\ldots,n-t-1$.
The raw columns have precisely the sample corrections $\omega_2$ at $a=-t_1$ and
$a=-t_2$, respectively.  This follows from the integer sine shifts
also when the interval contains negative multiples of $n$;
the numerator arguments avoid positive multiples of $n$.
Choose $\sigma=-t-\varepsilon$ with
\[
 0<\varepsilon<\min\{1,(\lfloor h/\omega_2\rfloor+1)\omega_2-h,
                         (\lfloor l/\omega_2\rfloor+1)\omega_2-l\}.
\]
There are no numerator poles.  The two noninteger pole families are
\[
 -t_1-j\omega_2\ (1\le j\le\lfloor h/\omega_2\rfloor),\qquad
 -t_2-j\omega_2\ (1\le j\le\lfloor l/\omega_2\rfloor).
\]
The shifted kernel is regular there.  The crossed left poles of
$J_1$ are the first family including $j=0$; those of $J_2$ are
the first family with $j\ge1$ and the second family including
$j=0$.  Its other left poles are left of $\sigma$, and all right
poles are right of $\sigma$.  All noninteger residue contributions
cancel by \eqref{an:two-ratio}.
Here the opposite-parameter sums are respectively
\[
 \{t_1,\omega_2-h,\omega_1+t_2,\Omega-l\},\qquad
 \{\Omega+t_1,\omega_2-h,\omega_1+t_2,-l\}.
\]
The only possibly negative entries, $\omega_2-h$ and $-l$, cannot
belong to $-\omega_1\Z_{\ge0}-\omega_2\Z_{\ge0}$: as before,
$\omega_1>n$ and irrationality of $\omega_2$ exclude this.  Thus the
second product also has no contour pinch.

If $h\ne l$, the two surviving integer contributions are
\[
 \omega_2 u_l(h)e^{2\pi ikt_1/n},\qquad
 \omega_2 u_{-t_1}(-t_2)e^{2\pi ikt_2/n},
\]
and cancel the two sample corrections separately.  If $h=l$, some noninteger
poles of $J_2$ and $K$ are double.  The same argument uses their
full residues, since \eqref{an:two-ratio} is a meromorphic identity.
All pole lists are unions of distinct locations: a merged pole is
counted once, with the residue of the full integrand.
At the common integer point the first contribution uses the actual
second column, with value $-1$, whereas the second contribution
uses the raw first column, with value $1/d$.  Their sum is
\[
 \omega_2(1/d-1)e^{2\pi ikt_2/n}=(d^{-2}-1)e^{2\pi ikt_2/n},
\]
exactly the difference between the raw product $d^{-2}$ and the
actual product $1$.  Finally $p'+q'=-g$, so the ratio product grows like
$O(e^{\pi g|\Im z|/\Omega})$, and the horizontal boundaries vanish
under exactly \eqref{an:frequency}.
\end{proof}

Write a principal frequency as $\theta=\pi v/n$, with
$v\equiv g+1\pmod2$, considered modulo $2n$.  Since
\[
 n(1-g/\Omega)=n-g+2g/\omega_1,\qquad 0<2g/\omega_1<2,
\]
condition \eqref{an:frequency} is equivalent to
\begin{equation}
 |v|\le n-g.
 \label{an:restricted-set}
\end{equation}
There are $n-g+1$ such frequencies.

\section{Two identities for four-gamma integrals}
\label{an:euler-section}

The product formula leaves four-gamma integrals.  We use an Euler
transformation to put them in the same form, then a contour difference
identity to cancel their linear combination.  Define
\begin{gather*}
 U=\Omega/4+(r+s)/2,\qquad V=\Omega/4+(r-s)/2,\\
 K_4(r,s,x,y)=\int e^{2\pi iyz/\Omega}
       \gamma(U+z)\gamma(U-z)
       \gamma(V+z+x)\gamma(V-z-x)\frac{dz}{i\sqrt\Omega},\\
 \Gamma_r(v)=\gamma(\Omega/2+r+v)\gamma(\Omega/2+r-v).
\end{gather*}
We use these integrals on the convergent, nonpinched domain
\[
 |\Re y|<\Omega/2-\Re r,\qquad
 2U,\ 2V,\ U+V+x,\ U+V-x
       \notin-\omega_1\Z_{\ge0}-\omega_2\Z_{\ge0}.
\]
The general vertical asymptotics of $\gamma$, also for complex shifts,
give the bound
\[
 \left|\gamma(U+z)\gamma(U-z)
          \gamma(V+z+x)\gamma(V-z-x)\right|
 =O\!\left(e^{\pi(2\Re r-\Omega)|\Im z|/\Omega}\right)
\]
on bounded vertical strips, locally uniformly in the parameters.
The contour prescription in the gamma preliminaries consequently
defines a holomorphic function on this domain.  Poles from the same
family may merge; a fixed loop then encloses their full residue sum.

\subsection{Euler transformation}

\begin{lemma}[Euler transformation]\label{an:euler}
Suppose $|\Re r|+|\Re y|<\Omega/2$ and neither of the two
four-gamma integrals below has a contour pinch.  Then
\begin{equation}
 K_4(r,s,x,y)=\frac{\Gamma_r(s)\Gamma_r(x)}{\Gamma_r(y)}K_4(-r,s,x,y).
 \label{an:euler-centered}
\end{equation}
\end{lemma}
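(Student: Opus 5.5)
The plan is to derive \eqref{an:euler-centered} from two applications of the two-gamma Fourier integral \cite[Eq.~(2.17)]{BSS}, the same evaluation used in Lemma~\ref{foundation:column-fourier}, read as a Fourier pair. Read from right to left, that formula writes a product $\gamma(\alpha_1+w)\gamma(\alpha_2-w)$ in a frequency variable $w$ as an exponential Fourier integral of a two-gamma product. Applied with the roles of $z$ and the frequency exchanged, it should express the factor $\gamma(V+z+x)\gamma(V-z-x)$ as an integral
\[
 \int e^{2\pi i z w/\Omega}\,\Phi(w)\,\frac{dw}{i\sqrt\Omega},
\]
where $\Phi$ is a two-gamma kernel depending on $V$ and $x$, multiplied by an explicit prefactor of the form $\gamma(\cdot)^{-1}$. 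I would fix the constants by checking them against the normalization $\Res_{z=0}\gamma=\sqrt\Omega/(2\pi)$ already used above.

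The key steps, in order, are the following.

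\emph{Substitution.} Insert this representation into $K_4(r,s,x,y)$.

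\emph{Interchange.} Justify swapping the $z$- and $w$-integrals by Fubini. I would first work on a small open set of real parameters: $r,s,x,y$ near $0$ and $|y|$ small. There both inner integrands are two-gamma products with strict exponential decay, by the vertical asymptotics recorded in the gamma preliminaries, and all contours can be taken to be vertical lines that separate the pole families.

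\emph{Inner evaluation.} After the swap, the inner $z$-integral is
\[
 \int e^{2\pi i z(y+w)/\Omega}\,\gamma(U+z)\gamma(U-z)\,\frac{dz}{i\sqrt\Omega},
\]
which is again a BSS two-gamma integral. It evaluates to $\gamma(2U)$ times a product of two gammas whose arguments are linear in $y+w$.

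\emph{Recognition.} The remaining $w$-integral is then a four-gamma integral of the same shape as $K_4$. After translating $w$ by a multiple of $x$ or $y$, and possibly renaming $w\mapsto -w$, I expect its parameters to be exactly those of $K_4(-r,s,x,y)$: the new $U,V$ are the old ones with $r$ replaced by $-r$. Reflection $\gamma(z)\gamma(\Omega-z)=1$ should then turn the accumulated prefactors — $\gamma(2U)$, $\gamma(2V)$, and the gammas coming from the inverse Fourier pair — into $\Gamma_r(s)\Gamma_r(x)/\Gamma_r(y)$.

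A consistency check for this bookkeeping is the case $r=0$. There $U$ and $V$ are unchanged by the transformation, so the prefactor must equal $1$, and indeed $\Gamma_0(v)=\gamma(\Omega/2+v)\gamma(\Omega/2-v)=1$ by reflection.

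\emph{Continuation.} Having proved \eqref{an:euler-centered} on the small real region, extend it to all parameters with $|\Re r|+|\Re y|<\Omega/2$ and no pinch. Both sides are holomorphic there: the continued separating-contour integrals are holomorphic by the contour prescription of the gamma preliminaries, and the $\Gamma_r$ factors are meromorphic. The hypothesis $|\Re r|+|\Re y|<\Omega/2$ is precisely the union of the decay conditions $|\Re y|<\Omega/2\mp\Re r$ for $K_4(\pm r,\dots)$. I would check that the parameter domain is connected, using that the pinch loci form a locally finite union of hyperplanes, and then apply the identity theorem.

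I expect the main obstacle to be the recognition step: matching the parameters exactly, including signs and the sign of the exponential frequency, and tracking which gamma factors survive as $\Gamma_r(s)$, $\Gamma_r(x)$ and $1/\Gamma_r(y)$ rather than their reflections. If the direct identification does not come out cleanly, I would instead apply the two-gamma transform to $\gamma(U+z)\gamma(U-z)$ rather than to the $V$-pair. Alternatively, I would apply the transform twice, giving a symmetric three-step chain that is the hyperbolic analogue of Euler's ${}_2F_1$ transformation, and read off the prefactor at the end.

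A secondary, technical issue is that the double integral converges absolutely only on a smaller region than the final domain, which is why the identity is first established on the small real region and then extended by continuation.
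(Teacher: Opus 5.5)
\paragraph{Gap: the recognition step fails as stated.}
Your analytic scaffolding is the right one: Fubini on a small neighbourhood where every contour is the imaginary axis, the BSS two-gamma formula used once to expand one gamma pair and once to evaluate the other, then continuation by the identity theorem. But if you carry out your substitution, the new $w$-integral has frequency $x$ (the old shift) and shift $y$ (the old frequency). What you actually get is
\[
 K_4(r,s,x,y)=\Gamma_r(s)\,e^{-2\pi ixy/\Omega}K_4(-r,-s,-y,x),
\]
with prefactor $\gamma(2U)\gamma(2V)=\Gamma_r(s)$ only.

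Translating or reflecting $w$ changes phases and signs, but it cannot make $y$ the frequency again, so one step never reaches $K_4(-r,s,x,y)$. Your fallbacks do not repair this:
\begin{itemize}
\item transforming the $U$-pair instead produces the same exchange by symmetry;
\item two Fourier steps in a row give $K_4(r,s,-x,-y)=K_4(r,s,x,y)$, which is just Fourier inversion.
\end{itemize}
The three distinct factors $\Gamma_r(s)$, $\Gamma_r(x)$, $\Gamma_r(y)^{-1}$ in the statement already signal that three Fourier steps are needed, each contributing one factor.

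\paragraph{The missing ingredient: a re-pairing symmetry.}
Pair $\gamma(U+z)$ with $\gamma(V-z-x)$ and $\gamma(V+z+x)$ with $\gamma(U-z)$, and translate $z$. This gives
\[
 K_4(r,s,x,y)=e^{\pi iy(s-x)/\Omega}K_4(r,x,s,y),
\]
which exchanges $s$ and $x$ while keeping $y$ as the frequency. The Fourier step acts on $(s,x,y)$ as a signed transposition of $x,y$ and negates $r$; the re-pairing is a transposition of $s,x$.

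Alternating the two three times,
\[
 (r,s,x,y)\mapsto(-r,-s,-y,x)\mapsto(-r,-y,-s,x)\mapsto(r,y,-x,-s)
 \mapsto(r,-x,y,-s)\mapsto(-r,x,s,y)\mapsto(-r,s,x,y),
\]
returns to the original frequency with $r$ negated. The exponential factors cancel. The three Fourier steps contribute $\Gamma_r(s)$, then $\Gamma_{-r}(-y)=\Gamma_r(y)^{-1}$ (by reflection), then $\Gamma_r(-x)=\Gamma_r(x)$. This is how the paper proceeds, working in the small neighbourhood where all six intermediate integrals converge.

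\paragraph{Continuation.}
Your continuation step then works as you describe; only the final identity needs to be continued, not the intermediate integrals. One correction: the hypothesis $|\Re r|+|\Re y|<\Omega/2$ is the intersection, not the union, of the two decay conditions.
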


\begin{proof}
Work first in $|r|+|s|+|x|+|y|<\Omega/16$.  All contours used below
can then be the imaginary axis.  With the normalization
\[
 \mathcal F\phi(y)=\int_{i\R}e^{2\pi iyz/\Omega}\phi(z)
                                     \frac{dz}{i\sqrt\Omega},
 \qquad
 \mathcal F\phi(i\eta)=\frac1{\sqrt\Omega}
       \int_{\R}e^{-2\pi it\eta/\Omega}\phi(it)\,dt,
\]
the product-convolution measure is $dy'/(i\sqrt\Omega)$.
The two-gamma Fourier integral \cite[Eq.~(2.17)]{BSS}, applied to
the two pairs with this normalization, gives
\begin{equation}
 K_4(r,s,x,y)=\Gamma_r(s)e^{-2\pi ixy/\Omega}K_4(-r,-s,-y,x).
 \label{an:D}
\end{equation}
Here is an absolute-convergence justification of the convolution.
Put $g_q(t)=\gamma(q+it)\gamma(q-it)$ and $H=\Omega/2-V$.
The two-gamma formula at the complementary parameter $H$, together
with reflection, gives directly
\[
 \gamma(V+x+it)\gamma(V-x-it)
 =\frac{\gamma(2V)}{\sqrt\Omega}
   \int_{\R}e^{2\pi xu/\Omega}e^{2\pi itu/\Omega}g_H(u)\,du.
\]
After insertion into $K_4$, the absolute value of the double integrand,
apart from its constant factor, is
\[
 \left|e^{-2\pi yt/\Omega}g_U(t)\right|
 \left|e^{2\pi xu/\Omega}g_H(u)\right|.
\]
Both factors are integrable: the neighborhood above satisfies
$0<\Re U,\Re V<\Omega/2$, $|\Re x|<\Re V$, and
$|\Re y|<\Omega/2-\Re U$.  Fubini and a second use of the two-gamma
formula yield $\Gamma_r(s)K_4(-r,s,y,x)$.
Exchanging the two gamma pairs in this last integral by translation
gives \eqref{an:D}, including its exponential factor.
Interchanging the two positive gamma parameters and translating $z$
by $(s-x)/2$ also gives
\begin{equation}
 K_4(r,s,x,y)=e^{\pi iy(s-x)/\Omega}K_4(r,x,s,y).
 \label{an:P}
\end{equation}
This substitution preserves the labelled pole families, so
\eqref{an:P} holds throughout its convergent, nonpinched domain.
Apply \eqref{an:D} and \eqref{an:P} alternately three times:
\begin{align*}
 (r,s,x,y)&\longmapsto(-r,-s,-y,x)
 \longmapsto(-r,-y,-s,x)\\
 &\longmapsto(r,y,-x,-s)
 \longmapsto(r,-x,y,-s)\\
 &\longmapsto(-r,x,s,y)
 \longmapsto(-r,s,x,y).
\end{align*}
The exponential factors cancel, since their combined exponent,
divided by $\pi i/\Omega$, is
\[
 -2xy+x(y-s)+2sx-s(y+x)+2sy+y(x-s)=0.
\]
The gamma factors multiply to
$\Gamma_r(s)\Gamma_r(x)/\Gamma_r(y)$, since reflection gives
$\Gamma_{-r}(y)=1/\Gamma_r(y)$.  This proves
\eqref{an:euler-centered} in the initial neighborhood.

For the stated parameters, scale $(r,s,x,y)$ by a complex variable
$\lambda$.  The common convergence domain
\[
 \{\lambda\in\C:
       |\Re(\lambda r)|+|\Re(\lambda y)|<\Omega/2\}
\]
is open and convex and contains $0$ and $1$.  Remove the locally
finite set where one of
$\Omega/2+\lambda(\pm r\pm s)$ or
$\Omega/2+\lambda(\pm r\pm x)$ is a negative period sum.
The remaining domain is connected and still contains $0$ and $1$.
Both contour integrals are holomorphic there.  The numerator gamma
factors have no poles there, and the denominator arguments
$\Omega/2+\lambda r\pm\lambda y$ have real parts in $(0,\Omega)$,
where $\gamma$ is finite and nonzero.  The one-variable identity
theorem therefore proves \eqref{an:euler-centered} at $\lambda=1$.
Only the final identity is continued; no convergence of the six
intermediate integrals is needed outside the initial neighborhood.
\end{proof}

\subsection{A contour difference identity}
\label{an:four-term-section}

Continue to assume \eqref{an:ordered}, and put
\[
 t_1=g+l,\qquad R=g/2,\qquad
 X=(n+g)/2+l,\qquad Y=(n+g)/2+h.
\]
\begin{lemma}[Four-term identity]\footnote{The bound on $Z$ is the common absolute-convergence condition for the four kernels and is supplied by the frequency restriction \eqref{an:restricted-set}.}\label{an:four-term}
If $|\Re Z|<(\Omega-g)/2$, then with continued separating contours
one has
\begin{align}
0={}&\sin(\pi h/\omega_1)K_4(R,X,Y+\omega_2,Z)\notag\\
 &+(-1)^{t_1+1}\sin(\pi p/\omega_1)e^{-2\pi iZ\omega_1/\Omega}
                                     K_4(R,X,Y-\omega_1,Z)\notag\\
 &+(-1)^p\sin(\pi t_1/\omega_1)e^{-\pi iZ\omega_1/\Omega}
                                     K_4(R,X-\omega_1,Y,Z)\notag\\
 &-\sin(\pi l/\omega_1)e^{-\pi iZ\omega_2/\Omega}K_4(R,X+\omega_2,Y,Z).
 \label{an:four-term-eq}
\end{align}
\end{lemma}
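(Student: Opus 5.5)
The plan is to realize the four terms as the four pieces of a single vanishing contour integral. We take the common integrand of the $K_4$'s and form an auxiliary kernel whose $\omega_1$-shift difference splits into four gamma products. This is the same mechanism as \eqref{foundation:kernel-difference} and \eqref{an:two-ratio}, but now applied in the integration variable with a period shift rather than an integer shift.

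\textbf{Setting up the kernel.} Write the four integrands with $U=\Omega/4+(R\pm X)/2$-type parameters. Shifting $Y$ by $+\omega_2$ or $-\omega_1$, or $X$ by $-\omega_1$ or $+\omega_2$, changes exactly one gamma pair. By \eqref{foundation:shifts}, each shifted pair equals the unshifted pair times a sine ratio. So every integrand has the form $e^{2\pi iZz/\Omega}G(z)\,s_j(z)$, where $G$ is one fixed four-gamma product (say that of $K_4(R,X,Y,Z)$ with one parameter moved by half a period) and $s_j$ is a product of two sines of arguments $\pi(\cdot\pm z)/\omega_2$ or $\pi(\cdot\pm z)/\omega_1$.

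The exponential prefactors $e^{-\pi iZ\omega_1/\Omega}$, $e^{-2\pi iZ\omega_1/\Omega}$ and $e^{-\pi iZ\omega_2/\Omega}$ are accounted for by translating $z$ by $\omega_1/2$, $\omega_1$ or $\omega_2/2$. After these translations, I expect all four integrands to become the values at $z$ and $z+\omega_1$ (or $z+\omega_2$) of a single meromorphic function $\Phi(z)=e^{2\pi iZz/\Omega}G(z)\,T(z)$, where $T$ is a trigonometric prefactor. The coefficients $\sin(\pi h/\omega_1)$, $\sin(\pi p/\omega_1)$, $\sin(\pi t_1/\omega_1)$ and $\sin(\pi l/\omega_1)$ should then come from a product-to-sum identity of the same type as the one displayed in the proof of Lemma~\ref{foundation:column-fourier}, now with four terms. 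The relations $p=g+h$, $t_1=g+l$, $X-Y=l-h$ and $\omega_1-\omega_2=n$ make the sine arguments differ by integers, and the signs $(-1)^{t_1+1}$ and $(-1)^p$ come from these integer shifts.

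\textbf{Contour argument.} With the identity in hand, the right side of \eqref{an:four-term-eq} should equal $\int_{\mathcal C}\Phi(z)\,dz-\int_{\mathcal C+\omega_1}\Phi(z)\,dz$ for the common separating contour. This is the integral of $\Phi$ around the boundary of a period strip, so it reduces to a sum of residues of $\Phi$ inside the strip. The key point is that $T$ will be chosen to vanish at exactly the poles of $G$ crossed when moving the contour by one period, so that $\Phi$ is regular in the strip and the difference is zero. This parallels the cancellation of $r_\ell$ in \eqref{foundation:continued-contour}.

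Horizontal contributions vanish by the vertical asymptotics. The hypothesis $|\Re Z|<(\Omega-g)/2$ is exactly $|\Re y|<\Omega/2-\Re r$ with $r=R=g/2$. The sine prefactors grow at most like $e^{\pi|\Im z|/\omega_j}$, and this growth must be absorbed after the half-period translations. I would check this by the same exponent bookkeeping as in Lemma~\ref{an:product-formula}.

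\textbf{Main obstacle.} I expect the main obstacle to be the precise choice of $T$ and of the translations, so that the four pieces match the stated coefficients and phases exactly. In particular, the $\omega_2$-shifted terms have to be handled within an $\omega_1$-periodic strip, which is presumably why both periods appear. I would first try to verify the identity numerically at one small case ($n=3$, $g=h=l=1$) to pin down $T$.

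If no single strip works, the fallback is to prove the identity for small complex parameters, where all contours are the imaginary axis and the residue bookkeeping is trivial. I would then continue analytically in $(R,X,Y,Z)$ along a scaling path, as in Lemma~\ref{an:euler}, using holomorphy of each continued $K_4$ on the nonpinched domain. The no-pinch conditions at the actual integer parameters follow from the irrationality of $\omega_2$, exactly as checked in Lemma~\ref{an:product-formula}.
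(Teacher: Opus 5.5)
\textbf{Verdict: there is a genuine gap.} Your architecture is right, and your fallback is in fact the paper's route: a primitive that telescopes in the integration variable, integration on an initial open parameter set where shifting the line crosses no pole, analytic continuation on the common convergent, nonpinched domain, and irrationality of $\omega_2$ for the final no-pinch check. But the proposal stops where the proof begins. The lemma's content is an explicit meromorphic difference identity, and you only expect one to exist and plan to find $T$ numerically.

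In the paper, put $u=\Omega/4+(R+X)/2$, $v=\Omega/4+(R-X)/2$, $\zeta=u-\omega_2+z$, $\eta=u-\omega_1-z$, $\xi=v-\omega_2+Y+z$ and $\nu=v-Y-z$. The primitive is $P(z)=e^{2\pi iZz/\Omega}\gamma(\zeta)\gamma(\xi)\gamma(\eta+\Omega)\gamma(\nu+\omega_2)$. It telescopes under $z\mapsto z+\omega_2$, not over the $\omega_1$-strip you propose: $P(z+\omega_2)-P(z)=\sum_i\kappa_iT_i(z)$. Here the $T_i$ are the four integrands before translation. The $\kappa_i$ are ratios of sines of parameter combinations, with denominators $\sin(\pi D/\omega_1)$ or $\sin(\pi D/\omega_2)$, where $D=Y-X$. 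For example, $\kappa_3=-e^{2\pi iZ\omega_2/\Omega}\sin(\pi c/\omega_2)/\sin(\pi D/\omega_2)$ with $c=2u-\Omega$. After division by $\gamma(\zeta)\gamma(\eta)\gamma(\xi)\gamma(\nu)$ this becomes a trigonometric identity that has to be verified. Until you produce it, the lemma is not proved.

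This is more than bookkeeping, for three reasons.

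\emph{Continuation needs general parameters.} Analytic continuation needs an identity on an open parameter set with coefficients holomorphic in the parameters. The stated coefficients, with signs $(-1)^{t_1+1}$ and $(-1)^p$, exist only at the integer specialization. They come out of the $\kappa_i$ only after conversions such as $\sin(\pi(p-\omega_2)/\omega_2)=(-1)^p\sin(\pi p/\omega_1)$, which use $\omega_1^{-1}+\omega_2^{-1}=1$. They also require clearing the denominator $\sin(\pi(h-l)/\omega_1)$. Hence the case $h=l$ is not reached by continuation. The paper handles it separately by \eqref{an:P}, which cancels the first term against the fourth and the second against the third.

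\emph{Your starting region fails.} The four kernels carry full-period shifts, so they are never simultaneously small. Near $R=X=0$, the kernel $K_4(R,X-\omega_1,Y,Z)$ has $U\approx-n/4<0$. No vertical line separates its poles, so the residue bookkeeping is not trivial. A single vertical line separating all four $T_i$ requires $\Re\zeta,\Re\eta,\Re\xi,\Re\nu>0$ on it. That forces $\Re u>\Omega/2$ and $\Re v>\omega_2/2$. Near zero one also has $D\approx0$, where the $\kappa_i$ blow up. The paper instead starts from $u=\Omega/2+\epsilon$, $v=\omega_2/2+\epsilon$, $Y=\omega_1/2+\eta_0$, with suitable real $\epsilon>0$ and $\eta_0\neq0$, on the line $\Re z=-n/2$. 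There all four products are separated, and the $\omega_2$-shift of the line crosses no pole of $P$.

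\emph{$T$ cannot be chosen.} The idea of choosing $T$ to vanish at the crossed poles is not available, because the difference equation determines $P$ up to $\omega_2$-periodic terms. Regularity comes from the choice of initial parameters, not from a choice of $T$.
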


\begin{proof}
\emph{A meromorphic difference.}
Let $u=\Omega/4+(R+X)/2$ and
$v=\Omega/4+(R-X)/2$ be general parameters and define
\begin{gather*}
 \zeta=u-\omega_2+z,\quad \eta=u-\omega_1-z,\quad
 \xi=v-\omega_2+Y+z,\quad \nu=v-Y-z,\\
 c=2u-\Omega,\quad D=v+Y-u,\quad E=c+D,\quad
 f=u-v-\omega_1+Y.
\end{gather*}
Thus $\eta+\zeta=c$, $\eta+\xi=E$, and $\eta-\nu=f$.
In this proof abbreviate
\[
 a(t)=\sin(\pi t/\omega_1),\quad b(t)=\sin(\pi t/\omega_2),\quad
 \lambda=e^{2\pi iZ\omega_2/\Omega},
\]
and set
\[
 \kappa_1=\frac{a(f-c)}{a(D)},\quad
 \kappa_2=\lambda\frac{b(E)}{b(D)},\quad
 \kappa_3=-\lambda\frac{b(c)}{b(D)},\quad
 \kappa_4=\frac{a(E-f)}{a(D)}.
\]
Initially assume $D\notin\omega_1\Z\cup\omega_2\Z$, so all four
coefficients are defined.
Let $T_i(z)$ be $e^{2\pi iZz/\Omega}$ times the product in row $i$:
\[
\begin{array}{c|l}
 i&\text{gamma product}\\\hline
 1&\gamma(\zeta)\gamma(\eta+\Omega)\gamma(\xi+\omega_2)\gamma(\nu)\\
 2&\gamma(\zeta+\Omega)\gamma(\eta)\gamma(\xi+\omega_2)\gamma(\nu)\\
 3&\gamma(\zeta+\omega_2)\gamma(\eta)\gamma(\xi+\Omega)\gamma(\nu)\\
 4&\gamma(\zeta+\omega_2)\gamma(\eta+\Omega)\gamma(\xi)\gamma(\nu).
\end{array}
\]
The auxiliary function
\[
 P(z)=e^{2\pi iZz/\Omega}
       \gamma(\zeta)\gamma(\xi)\gamma(\eta+\Omega)\gamma(\nu+\omega_2)
\]
satisfies the exact meromorphic identity
\begin{equation}
 \sum_{i=1}^4\kappa_iT_i(z)=P(z+\omega_2)-P(z).
 \label{an:primitive}
\end{equation}
To verify it, divide by
$-8e^{2\pi iZz/\Omega}\gamma(\zeta)\gamma(\eta)
\gamma(\xi)\gamma(\nu)$.  The result is the sine identity
\begin{align*}
 &\kappa_1a(\eta)b(\eta)a(\xi)
 +\kappa_2a(\zeta)b(\zeta)a(\xi)
 +\kappa_3a(\zeta)a(\xi)b(\xi)
 +\kappa_4a(\zeta)a(\eta)b(\eta)\\
 &\hspace{15mm}=-b(\eta)
             \{\lambda a(\zeta)a(\xi)+a(\eta)a(\nu)\}.
\end{align*}
The coefficient of $\cos(\pi\eta/\omega_2)$ vanishes because
$\kappa_2b(c)+\kappa_3b(E)=0$.  The other coefficient reduces,
by sine addition, to
$\kappa_1a(\xi)+\kappa_4a(\zeta)=-a(\nu)$.

\emph{Contour justification.}
There is an initial domain where integration of
\eqref{an:primitive} has no residue term.  Take
\begin{gather*}
 u=\Omega/2+\epsilon,\quad v=\omega_2/2+\epsilon,\quad
 Y=\omega_1/2+\eta_0,\quad \Re z=-n/2,\\
 \epsilon,\eta_0\in\R,\qquad
 0<\epsilon<\omega_1/4,\quad0<|\eta_0|<\min(\epsilon,\omega_2).
\end{gather*}
Here $D=\eta_0$, so both sine denominators are nonzero.
On this line the real parts of $\zeta,\eta,\xi,\nu$ are
$\epsilon,\epsilon,\epsilon+\eta_0,\epsilon-\eta_0$, so
every table entry has a separating vertical contour.  The first
right poles of $P$ have distances $\Omega+\epsilon$ and
$\omega_2+\epsilon-\eta_0$ from this line, both greater than $\omega_2$.
The shift of its line to the right by $\omega_2$ therefore crosses no pole.
For $|\Re Z|<(\omega_1-4\epsilon)/2$ the horizontal boundaries decay
exponentially.  Integrating \eqref{an:primitive} gives zero.
The strict inequalities persist in an open complex neighborhood,
which supplies an initial open set for the identity theorem.

The four table integrals, with measure $dz/(i\sqrt\Omega)$, equal
\begin{gather*}
 e^{2\pi iZ\omega_2/\Omega}K_4(R,X,Y+\omega_2,Z),\qquad
 e^{-2\pi iZ\omega_1/\Omega}K_4(R,X,Y-\omega_1,Z),\\
 e^{-\pi iZ\omega_1/\Omega}K_4(R,X-\omega_1,Y,Z),\qquad
 e^{\pi iZ\omega_2/\Omega}K_4(R,X+\omega_2,Y,Z).
\end{gather*}
The respective substitutions in the centered kernels are
$z\mapsto z-\omega_2,z+\omega_1,z+\omega_1/2,z-\omega_2/2$.  Thus the integrated identity
holds on this initial open set with the specified contours.
To continue it, all four products and $P$ have the same total parameter sum
$\Sigma=2u+2v$.  Their common convergence domain is the open convex set
\[
 |\Re Z|<\Omega-\Re(u+v).
\]
Indeed, the general complex-shift asymptotics give the locally uniform
bound
\[
 O\!\left(e^{\pi(\Re\Sigma-2\Omega+2|\Re Z|)|\Im z|/\Omega}\right).
\]
Remove the locally finite affine hyperplanes on which an opposite
pair in one of the four products pinches, or
$D\in\omega_1\Z\cup\omega_2\Z$.  The remaining domain is connected:
a complex line joining two admissible points is contained in none of
these hyperplanes, and its convex open slice remains connected after
removing a locally finite set of points.  The corrected contour
integrals and the coefficients $\kappa_i$ are holomorphic there.
The identity theorem extends the integrated relation throughout this
common convergent, nonpinched domain.

\emph{Specialization.}
At the required parameters,
\[
 c=t_1-\omega_2,\quad D=h-l,\quad E=p-\omega_2,\quad f=p+l-\omega_2.
\]
If $h>l$, substitute
\[
 b(p-\omega_2)=(-1)^pa(p),\quad b(t_1-\omega_2)=(-1)^{t_1}a(t_1),\quad
 b(h-l)=(-1)^{h-l+1}a(h-l)
\]
and multiply the identity by $e^{-2\pi iZ\omega_2/\Omega}a(h-l)$.
Both denominators are nonzero, since $0<h-l<n<\omega_1$ and
$\omega_2$ is irrational.  The three sine conversions use the period
normalization $\omega_1^{-1}+\omega_2^{-1}=1$.
This is \eqref{an:four-term-eq}.  If $h=l$, then $X=Y$;
\eqref{an:P} cancels the first and fourth terms and the second
and third terms separately, without division by $a(h-l)$.

There is no contour pinch at this specialization: the only possibly
negative opposite-parameter sums in the four kernels are
$-h,-l,\omega_2-h,\omega_2-l$.  None equals
$-r\omega_1-t\omega_2$ for $r,t\ge0$, since $\omega_1>n$ and
$\omega_2$ is irrational.  Poles on the same side may merge without
making the continued integral singular.
\end{proof}

\section{Vanishing of the restricted Fourier coefficients}
\label{an:vanishing-section}

\subsection{The negative-column transform}

Choose the principal frequency
$\theta_R\equiv2\pi j/n+\pi(-g-1)\pmod{2\pi}$.
In the range $|\theta_R|<\pi(1-g/\Omega)$ one has
\begin{equation}
 \widehat u_{-g}(j)=-\omega_2 e^{ig\theta_R/2}
  \gamma\left(\frac{\Omega-g}{2}+\frac{\Omega\theta_R}{2\pi}\right)
  \gamma\left(\frac{\Omega-g}{2}-\frac{\Omega\theta_R}{2\pi}\right).
 \label{an:signed-fourier}
\end{equation}
Indeed, use the single-column kernel identity with label $-g$ and
sample on $0,\ldots,n-1$.  Formula \eqref{an:signed} has no correction,
the rectangle has no noninteger pole, and
$-\epsilon+i\R$, $0<\epsilon<1$, already separates the
two gamma pole families.  The horizontal boundaries vanish in the
stated range.  The two-gamma Fourier evaluation and
$2\sin(-\pi g/\omega_1)\gamma(g)=(-1)^gF(-g)$ yield
\eqref{an:signed-fourier}.

\subsection{Cancellation of the transformed residual}

\begin{proposition}[Restricted Fourier vanishing]\label{an:restricted}
Under \eqref{an:ordered}, the Fourier transform of
$\mathcal C_{g,h}(k,l)$ in $k$ vanishes at all $n-g+1$ frequencies
in \eqref{an:restricted-set}.
\end{proposition}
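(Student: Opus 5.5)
The plan is to start from the finite Fourier reduction \eqref{an:finite}. It writes the $k$-transform of $\mathcal C_{g,h}(\cdot,l)$ at frequency $j$ as $e^{-2\pi ijg/n}$ times
\[
 \widehat u_{-g}(j)\,S_{p,q,s}(-j)-\tau e^{-2\pi ijl/n}S_{p',q',s'}(j),
\]
with $(p,q,s)=(g+h,-h,l-h)$ and $(p',q',s')=(-g-l,l,g+h+l)$, as in Lemma~\ref{an:product-formula}. The goal is to show that this bracket vanishes whenever the principal frequency $\theta_R=\pi v/n$ satisfies $|v|\le n-g$, which is \eqref{an:restricted-set}.

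First, I would check how the three principal frequencies are related. The frequency for $S_{p,q,s}(-j)$ is $\theta\equiv-2\pi j/n+\pi(g-1)$. The frequency for $S_{p',q',s'}(j)$ is $\theta'\equiv 2\pi j/n+\pi(-g+l+l-1)\equiv\theta_R$. So $\theta'=\theta_R$ and $\theta=-\theta_R$, up to the endpoint ambiguity that Lemma~\ref{an:product-formula} already permits. Hence on \eqref{an:restricted-set} all three of the following apply:
\begin{itemize}
\item the product formula \eqref{an:product} for the first triple, which holds at every frequency;
\item the same formula for the second triple, by \eqref{an:frequency};
\item the negative-column transform \eqref{an:signed-fourier}.
\end{itemize}
Substituting them expresses the bracket as a combination of the four integrals $I_1,I_2,I_1',I_2'$. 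The coefficients involve $\sin(\pi h/\omega_1)$, $\sin(\pi p/\omega_1)$, $\sin(\pi t_1/\omega_1)$ and $\sin(\pi l/\omega_1)$, since $q'=l$ and $p'=-t_1$, together with signs, $F$-values and $\tau=-\omega_2$.

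Next, each $J_i$ and $J_i'$ must be put into the centered kernel form $K_4$. I would translate $z$ to center each product, read off $(r,s,x,y)$ with $y=Z\propto\Omega\theta/2\pi$, and find $r=\pm g/2$. The two products of the second triple have total parameter sum $\Omega+g$, so they appear with $r=-R$ and carry the growth $e^{\pi g|\Im z|/\Omega}$. I would apply the Euler transformation, Lemma~\ref{an:euler}, to convert these to $r=R$. The prefactor $\Gamma_r(s)\Gamma_r(x)/\Gamma_r(y)$ then contains $1/\Gamma_R(Z)$. This should exactly cancel the two-gamma product $\gamma(\tfrac{\Omega-g}{2}\pm\tfrac{\Omega\theta_R}{2\pi})$ multiplying $S_{p,q,s}$ through $\widehat u_{-g}(j)$. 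More precisely, after Euler the two sides should share the common factor $\Gamma_R(Z)^{-1}$, up to reflection.

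The $\Gamma_R(s),\Gamma_R(x)$ factors evaluated at integer-shifted arguments should reduce, via the shift identities and $F(a)F(-a)=(-1)^{a+1}$, to the ratios $1/(dF(p)F(q))$ and similar that appear in $c_0$ and $c_0'$. Once these normalizations match, the four terms should be exactly the four kernels $K_4(R,X,Y+\omega_2,Z)$, $K_4(R,X,Y-\omega_1,Z)$, $K_4(R,X-\omega_1,Y,Z)$ and $K_4(R,X+\omega_2,Y,Z)$ of Lemma~\ref{an:four-term}. Their prefactors would be the stated sines with the exponentials $e^{-2\pi iZ\omega_1/\Omega}$ and $e^{\mp\pi iZ\omega_{1,2}/\Omega}$, which arise from the centering translations. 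Vanishing then follows from \eqref{an:four-term-eq}. The hypothesis $|\Re Z|<(\Omega-g)/2$ is equivalent to $|\theta_R|<\pi(1-g/\Omega)$, which is supplied by the frequency restriction.

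The main obstacle I expect is the bookkeeping in the matching step. Several things must be tracked simultaneously:
\begin{itemize}
\item the precise $(s,x)$ parameters after centering, and the use of \eqref{an:P} to order them;
\item the sign $(-1)^{t_1+1}$ and $(-1)^p$ factors;
\item the assignment of the $\theta$ and $-\theta$ representatives at the endpoint $|v|=n-g$;
\item the check that the Euler transformation's hypotheses, namely $|\Re r|+|\Re y|<\Omega/2$ and no pinch, hold at these integer-shifted parameters, using the same irrationality argument as before.
\end{itemize}
I would first verify the matching numerically for $n=3$, $g=h=l=1$, and then carry out the general sign calculation modulo $2$ in $p,t_1,h,l$.
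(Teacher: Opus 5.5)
Your architecture matches the paper's. You reduce by \eqref{an:finite}, insert Lemma~\ref{an:product-formula} and \eqref{an:signed-fourier}, use Lemma~\ref{an:euler} to bring all four integrals to a common $r$ while removing the gamma factor of $\widehat u_{-g}(j)$, and finish with Lemma~\ref{an:four-term}. Your frequency check is also right. The gap is that you apply the Euler transformation to the wrong product.

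The kernel $K_4(r,s,x,y)$ has total parameter sum $2U+2V=\Omega+2r$. So the second triple $(-g-l,l,g+h+l)$, with sum $\Omega+g$, is already at $r=g/2=R$. The first triple $(g+h,-h,l-h)$, with sum $\Omega-g$, is the one at $r=-g/2$. With $y=-Z$ and $X=(n+g)/2+l$, its two integrals are $e^{\pi ipy/\Omega}K_4(-g/2,s_1,X,y)$ and $e^{\pi i(n+p)y/\Omega}K_4(-g/2,s_2,X,y)$, where $s_{1,2}=\mp\Omega/2-g/2-h$.

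The growth $e^{\pi g|\Im z|/\Omega}$ you cite belongs to $R_{p'}R_{q'}$ in the rectangle. In the $K_4$ normalization the slower-decaying product is the one with the larger $r$.

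As written, your step would move the right product's kernels from $R$ to $-R$ and never move the left ones, so Lemma~\ref{an:four-term} cannot be applied. Moreover, the gamma factor produced by that Euler multiplier lands in the other term of the bracket, not in the term containing $\widehat u_{-g}(j)$, so nothing cancels. You could instead factor a common $\Gamma_{-g/2}(Z)$ out of both terms. But then you would need a four-term identity at $-R$, which is Lemma~\ref{an:four-term} conjugated by four different Euler multipliers, and you do not have it.

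The repair is to apply \eqref{an:euler-centered} to the left product. Its first multiplier is
\[
 \frac{\Gamma_{-g/2}(s_1)\Gamma_{-g/2}(X)}{\Gamma_{-g/2}(-Z)}
 =\frac{F(p)F(t_1)}{F(h)F(l)}\,
  \frac{\sin(\pi h/\omega_1)}{\sin(\pi p/\omega_1)\,\Gamma_{-g/2}(Z)},
\]
and the second differs from it by the factor $(-1)^g\sin^2(\pi p/\omega_1)/\sin^2(\pi h/\omega_1)$.

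\begin{enumerate}
\item The factor $1/\Gamma_{-g/2}(Z)$ cancels $\widehat u_{-g}(j)=-\omega_2e^{-\pi igZ/\Omega}\Gamma_{-g/2}(Z)$ inside the same term. This is legitimate because $\Gamma_{-g/2}(Z)$ is finite and nonzero, since $(\Omega-g)/2\pm Z\in(0,\Omega)$.
\item The sine ratios interchange $\sin(\pi p/\omega_1)$ and $\sin(\pi h/\omega_1)$ between $a_0$ and $b_0$. This produces the first two coefficients of \eqref{an:four-term-eq}.
\item Next apply \eqref{an:P}, swapping $s_{1,2}$ with $X$, and then the reflection $z\mapsto-z$, which negates the last two arguments. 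Since $-s_1=Y+\omega_2$, $-s_2=Y-\omega_1$ and $-y=Z$, this gives the first two kernels.
\item The right product's two integrals are directly the third and fourth kernels, with centering factors $e^{\pi ip'Z/\Omega}$ and $e^{\pi i(n+p')Z/\Omega}$.
\end{enumerate}

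With this reassignment, and the phase relation $e^{-2\pi ijl/n}e^{-2\pi iZl/\Omega}=(-1)^{l(g+1)}$ for the right term, your plan becomes the paper's proof.
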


\begin{proof}
Put $\theta_L=-\theta_R$ and $Z=\Omega\theta_L/(2\pi)$.
The restricted frequencies exclude both endpoint ambiguities, so these
choices of principal frequencies agree with the two products in
\eqref{an:finite}.  For the left product put
\[
 r=-g/2,\qquad s_1=-\Omega/2-g/2-h,\qquad
 s_2=\Omega/2-g/2-h,\qquad y=-Z.
\]
Here $Z$ is real and $|Z|<(\Omega-g)/2$, which gives the common
convergence condition for the Euler transformation at $r=-g/2$ and
for the four-term identity at $R=g/2$.  The no-pinch checks in the
product and four-term proofs apply to these kernels: centering,
\eqref{an:P}, and reflection preserve their labelled pole families.
The left product's two integrals are
$e^{\pi ipy/\Omega}K_4(r,s_1,X,y)$ and
$e^{\pi i(n+p)y/\Omega}K_4(r,s_2,X,y)$.
Apply \eqref{an:euler-centered}, then \eqref{an:P}, and reflect
the integration variable.  They become the first and second kernels
of \eqref{an:four-term-eq}.  The two right product integrals are
its third and fourth kernels, with centering factors
$e^{\pi ip'Z/\Omega}$ and $e^{\pi i(n+p')Z/\Omega}$.

Let $\mathcal L(Z)$ denote the four-term sum in
\eqref{an:four-term-eq}.  The product formulas now give
\begin{equation}
\begin{split}
 &\widehat u_{-g}(j)S_{g+h,-h,l-h}(-j)
       -\tau e^{-2\pi ijl/n}S_{-g-l,l,g+h+l}(j)\\
 &\hspace{20mm}=-2\sqrt\Omega\,M\mathcal L(Z),\qquad
 M=\frac{\omega_2(-1)^{g+h(l+1)}F(t_1)}{dF(l)}
                          e^{\pi iZ(\omega_1+l-g)/\Omega}.
\end{split}
 \label{an:coefficients}
\end{equation}
All factors cancelled below are finite and nonzero.  In particular,
$(\Omega-g)/2\pm Z\in(0,\Omega)$, so
$\Gamma_{-g/2}(Z)\ne0$; the integer samples of $F$ are nonzero, and
$h,p,t_1,l\in\{1,\ldots,n-1\}$ makes their sines with denominator
$\omega_1>n$ nonzero.  This also gives $M\ne0$.
To verify the gamma part, the first Euler multiplier is
\[
 \frac{\Gamma_{-g/2}(s_1)\Gamma_{-g/2}(X)}{\Gamma_{-g/2}(-Z)}
 =\frac{F(p)F(t_1)}{F(h)F(l)}
       \frac{\sin(\pi h/\omega_1)}{\sin(\pi p/\omega_1)\Gamma_{-g/2}(Z)}.
\]
The second multiplier divided by the first is
$(-1)^g\sin^2(\pi p/\omega_1)/\sin^2(\pi h/\omega_1)$.
These follow from reflection and the identities, at the nonzero
integer arguments used here,
\[
 \gamma(\omega_1+a)=F(-a),\quad \gamma(\omega_2-a)=1/F(-a),\quad
 2\sin(\pi a/\omega_1)\gamma(-a)=(-1)^aF(a).
\]
The sample factors are
\[
 c_L=\frac{(-1)^{g+hl+1}F(h)}{dF(p)},\qquad
 c_R=\frac{(-1)^{lp+1}F(t_1)}{dF(l)}.
\]
Finally \eqref{an:signed-fourier} reads
$\widehat u_{-g}(j)=-\omega_2 e^{-\pi igZ/\Omega}\Gamma_{-g/2}(Z)$, and
$\theta_L\equiv-2\pi j/n+\pi(g-1)\pmod{2\pi}$
gives the phase relation
\[
 e^{-2\pi ijl/n}e^{-2\pi iZl/\Omega}=(-1)^{l(g+1)}.
\]
For example, after factoring out $-2\sqrt\Omega$, the coefficient of
$K_4(R,X,Y+\omega_2,Z)$ is
\begin{align*}
 &-\omega_2\,
 \frac{(-1)^{g+hl+1}F(h)}{dF(p)}
 (-1)^h\sin\frac{\pi p}{\omega_1}
 \frac{F(p)F(t_1)\sin(\pi h/\omega_1)}
      {F(h)F(l)\sin(\pi p/\omega_1)}
 e^{-\pi iZ(g+p+s_1-X)/\Omega}\\
 &\hspace{25mm}=M\sin\frac{\pi h}{\omega_1},
 \qquad g+p+s_1-X=g-\omega_1-l.
\end{align*}
The other three coefficients follow from the second Euler multiplier,
$c_R$, and the displayed phase relation in the same way.  This proves
\eqref{an:coefficients}, including its normalization.
Lemma~\ref{an:four-term} now makes the bracket in
\eqref{an:finite} zero.
\end{proof}

%% file: proof_revised/cubic_completion.tex
\section{Symmetries and degenerate indices}
\label{sec:degenerate}

The cubic equations have a useful symmetry that becomes visible after
replacing four indices by two triples. Associate to $(g,h,k,l)$ the pair
\begin{equation}\label{tail:triples}
 \boldsymbol a=(0,k-g,l-h),\qquad
 \boldsymbol c=(g+h,-k,-l).
\end{equation}
The pair is \emph{balanced}, meaning that its six entries sum to zero.
Its cross sums are
\begin{equation}\label{tail:crossmatrix}
 (a_i+c_j)_{i,j=0}^2=
 \begin{pmatrix}
 g+h&-k&-l\\
 h+k&-g&k-g-l\\
 g+l&l-h-k&-h
 \end{pmatrix}.
\end{equation}
We call the pair \emph{nondegenerate} when all nine cross sums are nonzero.
Every balanced pair has the form \eqref{tail:triples} after an opposite
translation $(a_i,c_j)\mapsto(a_i-a_0,c_j+a_0)$: take
$k=-c_1$, $l=-c_2$, $g=-c_1-a_1$, and $h=-c_2-a_2$ after translating.

For a balanced pair, set
\[
 S(\boldsymbol a,\boldsymbol c)
 =\sum_m\prod_{i=0}^2W_{m+a_i}\prod_{j=0}^2W_{c_j-m},
 \qquad
 P(\boldsymbol a,\boldsymbol c)=\prod_{i,j=0}^2W_{a_i+c_j}.
\]

\begin{lemma}[Factorization and permutation symmetry]\label{tail:factor-lemma}
For a nondegenerate pair,
\begin{equation}\label{tail:factor}
 \mathcal C_{g,h}(k,l)
 =\frac{W_{-g-h}W_gW_h}{\delta^5}
       \bigl(\delta^2S(\boldsymbol a,\boldsymbol c)
                      -\tau P(\boldsymbol a,\boldsymbol c)\bigr).
\end{equation}
Its vanishing is preserved by independent permutations of the two
triples and by opposite translation.
\end{lemma}

\begin{proof}
Substitute \eqref{tail:uniform-core} in \eqref{tail:C} and use
$W_aW_{-a}=\delta$ for $a\ne0$. Nondegeneracy excludes every origin
correction. The prefactor in \eqref{tail:factor} is nonzero.
Both $S$ and $P$ are invariant under the permutations; opposite
translation is absorbed by reindexing $m$ in $S$.
\end{proof}

\begin{lemma}[Degenerate cubics]\label{tail:collisions}
If any cross sum in \eqref{tail:crossmatrix} is zero, then
$C(g,h,k,l)=0$ follows from $Q=0$, the coefficient formula
\eqref{tail:uniform-core}, $W_0=-1$, the reciprocal product identities,
and $\delta^2=n\delta+1$.
\end{lemma}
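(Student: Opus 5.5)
The plan is to treat $C(g,h,k,l)$ directly through \eqref{tail:uniform-core}, never through the factorization \eqref{tail:factor}, which assumes nondegeneracy. I first reduce the nine vanishing patterns of \eqref{tail:crossmatrix} to a small number of representatives. I then show, in each representative, that an axis entry of $\mathsf A$ collapses the cubic sum to a quadratic sum $Q(\cdot,\cdot)$ plus explicit products of entries.

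\emph{Reduction step.} Expand $\tau^3C$ with \eqref{tail:uniform-core}. The result is $\delta^{-3}$ times the pure $W$-sum, plus finitely many origin-correction terms. Using $W_aW_{-a}=\delta$, the pure sum is of the shape $S(\boldsymbol a,\boldsymbol c)$ and the product term is of the shape $P(\boldsymbol a,\boldsymbol c)$, now with some $W_0=-1$ factors. The corrections are supported exactly where an entry $a_i+c_j$ or a summation index vanishes.

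I would check that the corrected expression, not only $S$ and $P$, is invariant under independent permutations of the triples and under opposite translation. This amounts to verifying that the correction terms are themselves indexed symmetrically by the cross sums. Granting that, any zero cross sum can be moved to position $(0,0)$, that is, to $g+h=0$. For the case $g=0$ I would alternatively use the linear symmetry $\mathsf A_{a,b}=\mathsf A_{-b,a-b}$ from \eqref{tail:linear} together with \eqref{foundation:rotation}.

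\emph{Representative case $g+h=0$.} Here $\mathsf A_{m,0}=\ind{m=0}-\tau^{-1}$, so
\[
 \sum_m\mathsf A_{m,0}\mathsf A_{g,m+k}\mathsf A_{-g,m+l}
 =\mathsf A_{g,k}\mathsf A_{-g,l}-\tau^{-1}\sum_m\mathsf A_{g,m+k}\mathsf A_{-g,m+l}.
\]
By \eqref{foundation:rotation}, $\mathsf A_{-g,m+l}=\mathsf A_{m+l+g,g}$. The last sum is therefore a quadratic sum of the form in \eqref{tail:Q}, with shift $l-k+g$ after reindexing. It thus equals $\tau^{-1}$ times $\ind{\,\cdot=0}-\delta^{-1}\ind{g=0}$ by $Q=0$.

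Next, rewrite $\mathsf A_{g,k}\mathsf A_{-g,l}$ and the product term $\mathsf A_{g+l,k}\mathsf A_{k-g,l}$ via \eqref{tail:uniform-core}. Both are $\tau^{-2}\delta^{-2}$ times products of six $W$'s, and these agree after cancelling pairs $W_aW_{-a}=\delta$. What remains should be indicator terms, which combine with $+\delta^{-1}\ind{g=h=0}$ to zero using $\tau=\delta-1$ and $\delta^2=n\delta+1$. The latter is needed exactly in the subcase $g=h=0$, where the quadratic sum carries the $n$ from $\sum_a\mathsf A_{a,0}$.

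\emph{Main obstacle.} The hard part is the bookkeeping when several cross sums vanish at once, for instance $g=h=0$ or $k=l=0$ together with $g+h=0$. In those cases extra origin corrections appear, and some $W$-cancellations $W_aW_{-a}=\delta$ fail because $a=0$. I would handle these by enumerating the possible zero patterns of the $3\times3$ matrix \eqref{tail:crossmatrix} that are compatible with balance, up to the symmetries. Each pattern then becomes a finite identity in $\tau,\delta,n$, checked with $W_0=-1$ and $\delta^2=n\delta+1$.

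If the symmetry of the correction terms in the reduction step turns out to be awkward to establish, the fallback is to handle the nine single-zero cases directly. They come in three types: a zero in the summed factors, as for $g+h,g,h$; a zero in the product term, as for $k,l$; or a zero making a summed factor hit the axis at one value of $m$. Each type is handled by the same collapse to a $Q$-sum.
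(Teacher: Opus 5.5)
Your treatment of the representative case $h=-g$ is essentially the paper's boundary-case argument: you collapse with $\mathsf A_{m,0}=\ind{m=0}-\tau^{-1}$, rotate to a $Q$-sum, and compare the two six-$W$ products, with $k=g+l$ as the indicator case. The gap is the reduction that is supposed to bring every other zero cross sum to position $(0,0)$.

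Lemma~\ref{tail:factor-lemma} proves invariance only for nondegenerate pairs, and this does not extend by bookkeeping of correction terms. It rests on $1/(W_{g+h}W_{-g}W_{-h})=\delta^{-3}W_{-g-h}W_gW_h$. That identity fails once a diagonal cross sum vanishes, because $W_0^2=1\neq\delta$. In that frame an origin correction also enters the $m$-sum.

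Concretely, take a single off-diagonal zero with $g,h,g+h\neq0$. By \eqref{tail:balanced}, $\mathcal C_{g,h}(k,l)=\delta^{-5}W_{-g-h}W_gW_h(\delta^2S-\tau P)$. Permute the same pair so the zero sits at $(0,0)$, giving indices $(g',-g',k',l')$. Expanding \eqref{tail:uniform-core} then gives
\[
 \mathcal C_{g',-g'}(k',l')=-\delta^{-3}\bigl(\delta S-\tau P-\tau^2\delta T\bigr),
 \qquad T=W_{k'-g'}W_{l'+g'}W_{-k'}W_{-l'},
\]
with the same symmetric $S$ and $P$. These are different linear combinations of $S,P,T$, so the check you describe (correction terms indexed symmetrically) would fail.

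The two residuals become proportional only after you evaluate the degenerate reciprocal pair inside $S$. The identity $W_{m+a_0}W_{-m-a_0}=\delta-\tau\ind{m=-a_0}$ gives $S=\delta\Sigma-\tau T$, where $\Sigma$ is the unrestricted sum of the other four factors. If $H=a_1+c_1$ and $V=a_2+c_1$ are nonzero, then also $P=-\delta^2T$. Both residuals are then multiples of $\Sigma=\delta\tau^2Q(-V,H)=0$. But that computation already proves every off-diagonal case. So the transport to $g+h=0$ is unjustified as stated and superfluous once justified. The paper stays in the frame $g,h,g+h\neq0$, where \eqref{tail:balanced} really is symmetric, and never moves a zero onto the diagonal.

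Your fallback does not supply this mechanism either. When $g,h,g+h\neq0$, none of $\mathsf A_{m,g+h}$, $\mathsf A_{g,m+k}$, $\mathsf A_{h,m+l}$ is an axis row or column. Each meets the axis at exactly one $m$ for every index choice, degenerate or not, so that cannot be a type of degeneracy. Nor are $-k,-l$ confined to the product term. An off-diagonal zero $a_i+c_j=0$ ($i\neq j$) pairs a $W$ in one summed factor with its reciprocal in another. The collapse to a $Q$-sum comes from that pairing, not from an axis entry.

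Finally, the pairing argument needs $H,V\neq0$. This fails exactly at a full multiset match, $(k,l)=(0,-g)$ or $(-h,0)$, where the zeros lie along a derangement. There the paper evaluates $S=\delta^3(n-3)+3\delta^2$ and $P=-\delta^3$ against the correction $\tau^3\delta^3$ in \eqref{tail:balanced}, using $\delta^2=n\delta+1$. Your plan to enumerate multi-zero patterns names this obstacle but does not resolve it.
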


\begin{proof}
We first treat $g,h,g+h\ne0$, then the three boundary cases.

\emph{A zero cross sum with $g,h,g+h\ne0$.}
Keeping the origin corrections in the preceding substitution gives
\begin{equation}\label{tail:balanced}
 \mathcal C_{g,h}(k,l)
 =\frac{W_{-g-h}W_gW_h}{\delta^5}
 \left(\delta^2S-\tau P-\tau^3\delta^3\,
 \ind{\{c_0,c_1,c_2\}=\{-a_0,-a_1,-a_2\}}\right).
\end{equation}
Here equality in the indicator means equality of multisets.
The only possible origin corrections occur at
$(k,l)=(0,-g)$ and $(-h,0)$. These are exactly the two possible
multiset matches: a matching permutation must avoid the three
nonzero diagonal cross sums and hence is one of the two derangements.
A repeated row would also allow a forbidden diagonal match, so a full
match has three distinct rows. In that case,
\[
 S=\delta^3(n-3)+3\delta^2,\qquad P=-\delta^3.
\]
Thus the bracket in \eqref{tail:balanced} vanishes by
$n=\delta-\delta^{-1}$ and $\tau=\delta-1$.

If there is no full match, permute the triples so that $a_0+c_0=0$.
Then
\[
 W_{m+a_0}W_{-m-a_0}=\delta-\tau\ind{m=-a_0}.
\]
Put $H=a_1+c_1$, $V=a_2+c_1$. Both are nonzero: either zero
would force the remaining entries to match as well. The unrestricted
sum of the other four factors is
\[
\sum_tW_tW_{H-t}W_{-t-V}W_{t+V-H}
       =\delta\tau^2Q(-V,H)=0.
\]
Indeed, with $m=t+V$, the sum is
$\delta\sum_m\mathsf B_{m-V,H}\mathsf B_{H,m}$.
The uniform coefficient formula has no origin correction in these
entries because $H\ne0$. The substitution uses
$W_HW_{-H}=\delta$, and $H,V\ne0$ remove both indicator terms
in $Q(-V,H)$.
Writing
$T=\prod_{i=1}^2W_{a_i-a_0}\prod_{j=1}^2W_{c_j+a_0}$,
we obtain $S=-\tau T$. The four interior factors of $P$ pair
to $\delta^2$, and $W_0=-1$, so $P=-\delta^2T$.
Again $\delta^2S-\tau P=0$.

\emph{The boundary cases.}
Set $c=\tau^{-1}$. For $h\ne0$, the coefficient formula gives
\begin{equation}\label{tail:two-products}
 \mathsf A_{u,h}\mathsf A_{h,u}=
 \begin{cases}
 c^2,&u=0\text{ or }u=h,\\
 \delta c^2,&u\ne0,h,
 \end{cases}
 \qquad \delta c^2-c=c^2.
\end{equation}
For $g=0$, $h\ne0$, use the fixed row
$\mathsf A_{0,a}=\ind{a=0}-c$ and $Q=0$ to reduce the cubic sum to
\[
 \mathsf A_{-k,h}\mathsf A_{h,l-k}-c\ind{l=0}.
\]
For $l\ne0$, the uniform formula has no origin corrections and gives
\[
 \mathsf B_{-k,h}\mathsf B_{h,l-k}
 =\delta^{-1}W_{-k}W_{h+k}W_{l-k-h}W_{k-l}
 =\mathsf B_{l,k}\mathsf B_{h+k,l}.
\]
Dividing by $\tau^2$ gives the required equality.
For $l=0$, the target product is
$(\ind{k=0}-c)(\ind{k=-h}-c)$.
Both sides equal $c^2-c$ at $k=0,-h$, and $c^2$ elsewhere,
by \eqref{tail:two-products}.
The case $h=0$, $g\ne0$ follows by interchanging $(g,k)$ and $(h,l)$.
For $h=-g\ne0$, the same quadratic reduction gives
\[
 \mathsf A_{g,k}\mathsf A_{-l,g}-c\ind{k=g+l}.
\]
Substitution treats $k\ne g+l$, and \eqref{tail:two-products}
treats $k=g+l$.

Finally, when $g=h=0$, the cubic sum is
\[
 \ind{k=l=0}
 -c\bigl(\ind{k=0}+\ind{l=0}+\ind{k=l}\bigr)
 +3c^2-nc^3.
\]
The product $\mathsf A_{l,k}\mathsf A_{k,l}$ equals
$(1-c)^2$, $c^2$, or $\delta c^2$, according as all, exactly one,
or none of the equalities $k=0$, $l=0$, $k=l$ hold.
Using $\delta^2=n\delta+1$ in each case proves that the displayed
sum is $\mathsf A_{l,k}\mathsf A_{k,l}-\delta^{-1}$, as required.
\end{proof}

\section{Polynomial interpolation and index reduction}
\label{tail:completion}

The contour calculation gives only a range of Fourier coefficients.
The permutation symmetry now supplies enough zeros to recover the rest.

\subsection{Induction in the ordered range}

\begin{proposition}\label{tail:ordered}
For every odd $n\ge3$, the cubics vanish in the following ordered range:
\[
 \mathcal C_{g,h}(k,l)=0\quad
 (g,h\ge1,\ g+h<n,\ 1\le l\le h,\ k\in G).
\]
\end{proposition}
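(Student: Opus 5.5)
The plan is to prove the statement by strong induction on $g$, for all $n$ at once. Fix $(g,h,l)$ in the ordered range and regard $k\mapsto\mathcal C_{g,h}(k,l)$ as a function on $G$. Proposition~\ref{an:restricted} says that its Fourier transform vanishes at the $n-g+1$ residues $j$ whose principal frequencies satisfy \eqref{an:restricted-set}. Since $\theta_R=\pi v/n$ with $v\equiv g+1\pmod 2$ and $v$ is taken modulo $2n$, these $v$ form an arithmetic progression of step $2$ in $[-(n-g),n-g]$. The $g-1$ missing frequencies therefore form a block of consecutive residues $j_0,j_0+1,\dots,j_0+g-2$ modulo $n$; these are the values of $v$ with $n-g<|v|\le n$ of the right parity.

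\emph{Base case.} For $g=1$ nothing is missing: $n-g+1=n$ and every residue is hit. Fourier inversion then gives $\mathcal C_{1,h}(k,l)=0$ for all $k$.

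\emph{Reduction to a polynomial.} For $g\ge2$, Fourier inversion gives
\[
 \mathcal C_{g,h}(k,l)=x^{j_0}\,p(x),\qquad x=e^{2\pi ik/n},
\]
where $p$ is a polynomial of degree at most $g-2$. Since the $x$-values for distinct $k\in G$ are distinct, it suffices to exhibit $g-1$ distinct $k\in G$ at which $\mathcal C_{g,h}(k,l)$ is already known to vanish. This is a Vandermonde argument, and there is no need to control the nonzero Fourier coefficients.

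\emph{Supply of zeros.} There are two sources.

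\begin{enumerate}
\item \emph{Degenerate values.} By \eqref{tail:crossmatrix}, a cross sum vanishes when $k\in\{0,-h,g+l,l-h\}$. For these $k$, Lemma~\ref{tail:collisions} gives $C=0$ from $Q=0$ (Proposition~\ref{foundation:quadratics}).
\item \emph{Induction values.} For nondegenerate $k$, Lemma~\ref{tail:factor-lemma} lets us permute each of $\boldsymbol a=(0,k-g,l-h)$ and $\boldsymbol c=(g+h,-k,-l)$ and apply an opposite translation. The result is a new quadruple $(g',h',k',l')$ with $g'=-c'_1-a'_1+a'_0$, and so on. For instance, exchanging $c_0$ with $c_1$ and translating produces a quadruple whose new $g'$ is a difference such as $k-g$ or $-k-h$ reduced into $\{1,\dots,n-1\}$. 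I would tabulate the $36$ permutation pairs, and for each $k$ ask whether some choice lands in the ordered range $g',h'\ge1$, $g'+h'<n$, $1\le l'\le h'$ with $g'<g$. When it does, the inductive hypothesis gives $\mathcal C_{g',h'}(k',l')=0$, hence $\mathcal C_{g,h}(k,l)=0$.
\end{enumerate}

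It may also help to exploit the reflections $(g,k)\leftrightarrow(h,l)$ and the order-three symmetry \eqref{tail:linear}. These give more ways to move into the ordered range, for example by replacing the condition $l\le h$ by its mirror.

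\emph{Main obstacle.} The hard step is the counting: showing that, for every admissible $(g,h,l)$, these mechanisms produce at least $g-1$ distinct good values of $k$. The natural candidates are $k$ with a small representative. For $k\in\{1,\dots,g-1\}$, or $k$ in a window adjacent to $-h$, the permuted triple should have $g'=g-k$ or $g'=k$, both strictly smaller than $g$. I would try first to show that the $g-1$ values $k=1,\dots,g-1$, or a shifted window of that length, each yield an ordered quadruple with smaller first index. Any collisions with the degenerate values, or failures of the constraint $l'\le h'$, would be patched using the four degenerate zeros and the $(g,k)\leftrightarrow(h,l)$ swap. If a single window does not work uniformly, I would split into the cases $l<h$ and $l=h$, and $g\le h$ versus $g>h$, choosing a different permutation in each case.
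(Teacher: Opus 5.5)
Your framework matches the paper's: strong induction on $g$, the base case $g=1$ with no omitted frequencies, and Fourier inversion writing $k\mapsto\mathcal C_{g,h}(k,l)$ as $\zeta^{ak}f(\zeta^k)$ with $\deg f\le g-2$, so that $g-1$ known zeros suffice. The gap is that producing those $g-1$ zeros is all of the real content, and you leave it as a plan: tabulate the $36$ permutation pairs, try a window, patch failures, split into cases. As written, the proposal does not prove the proposition.

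\textbf{The missing idea.} You need one explicit descent that works for every $k\in\{1,\dots,g-1\}$. In each case below, translate the first row entry of the permuted pair to zero.
\begin{enumerate}
\item If $l+g-k\le h$: permute rows as $(1,0,2)$ and keep the columns. This gives $(g,h,k,l)\mapsto(k,h,g,l+g-k)$.
\item If $l+g-k>h$: permute rows as $(2,0,1)$ and columns as $(2,1,0)$. This gives $(g,h,k,l)\mapsto(k,n-h-k,k+h-l,n-g-l)$.
\end{enumerate}
In both cases the new first index is $k<g$. Orderedness is immediate in the first case. In the second, $k+(n-h-k)=n-h<n$ and $1\le n-g-l\le n-h-k$, where the last inequality is exactly $l+g-k\ge h$. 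So the case split on $l+g-k$ versus $h$ is precisely what secures the constraint $l'\le h'$ you were worried about. The alternative $g'=g-k$ is never needed.

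\textbf{No patching required.} None of $k=1,\dots,g-1$ is degenerate in the ordered range. The degenerate values $0$, $-h\equiv n-h$, $g+l$, $l-h$ have representatives equal to $0$ or at least $g+1$, because $g+h<n$. Hence you need neither the four degenerate zeros nor the $(g,k)\leftrightarrow(h,l)$ swap. The permuted pairs are nondegenerate, so the factorization lemma (Lemma~\ref{tail:factor-lemma}) transfers the vanishing back to the original cubic.

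\textbf{A formula error.} Your expression $g'=-c'_1-a'_1+a'_0$ is wrong. $g'$ is minus the $(1,1)$ cross sum, $-(a'_1+c'_1)$, which is invariant under opposite translation. With the extra $a'_0$ term, the first map above would give $g'=2k-g$ instead of $k$. Any tabulation of the $36$ cases built on that formula would therefore go astray.
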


\begin{proof}
Fix $n$ and use strong induction on the positive integer $g$. By
Proposition~\ref{an:restricted}, for each ordered $g,h,l$, precisely
$n-g+1$ of the $n$ Fourier coefficients of the sequence
$k\mapsto\mathcal C_{g,h}(k,l)$ are known to vanish.
The omitted frequencies form a consecutive cyclic block of
length $g-1$: the principal integers $v=n\theta_R/\pi$,
of parity $g+1$, advance by two as the Fourier index advances
by one, and the frequency condition is $|v|\le n-g$.
For $g=1$ there are no omitted frequencies, proving the base case.
For $g>1$, Fourier inversion therefore gives, with
$\zeta=\exp(2\pi i/n)$,
\begin{equation}\label{tail:polynomial}
 \mathcal C_{g,h}(k,l)=\zeta^{ak}f(\zeta^k),\qquad
                         \deg f\le g-2
\end{equation}
for an integer $a$ and a complex polynomial $f$.
More explicitly, if the omitted block starts at $a$, Fourier inversion
gives
\[
 f(X)=\frac1n\sum_{j=0}^{g-2}
          \widehat{\mathcal C}_{g,h}(a+j,l)X^j,
\]
where the hat denotes the Fourier transform in the first argument $k$.

We show that $k=1,\ldots,g-1$ are zeros.  A degenerate index choice is
already covered by Lemma~\ref{tail:collisions}.  Otherwise
perform the following changes of cubic labels:
\begin{equation}\label{tail:descent}
 (g,h,k,l)\longmapsto
 \begin{cases}
 (k,h,g,l+g-k),&l+g-k\le h,\\
 (k,n-h-k,k+h-l,n-g-l),&l+g-k>h.
 \end{cases}
\end{equation}
These changes arise from permutations and opposite translation
of \eqref{tail:triples}.  With positions numbered $0,1,2$,
the first uses rows $(1,0,2)$ and unchanged columns; the second
uses rows $(2,0,1)$ and columns $(2,1,0)$.  In both cases
translate the first row entry to zero.  Substitution gives
exactly \eqref{tail:descent}, with all entries interpreted
modulo $n$ where necessary.

The new first index is $k<g$.  Both new quadruples are ordered.
This is immediate in the first case; in the second,
\[
 k+(n-h-k)=n-h<n,\qquad
 1\le n-g-l\le n-h-k,
\]
where the last inequality follows from $l+g-k>h$.
The induction hypothesis and the nonzero factor in
\eqref{tail:factor} therefore give each required zero of the
original cubic.  The $g-1$ points
$\zeta,\ldots,\zeta^{g-1}$ are distinct, including for
composite $n$, so \eqref{tail:polynomial} forces $f=0$.
\end{proof}

\subsection{Putting the indices in the ordered range}

\begin{lemma}\label{tail:normalform}
Let $X_{ij}\in\{1,\ldots,n-1\}$ represent the cross sums of a
nondegenerate balanced pair of triples. If
$X_{00}+X_{11}+X_{22}=2n$, independent permutations and opposite
translation put the pair in the ordered range of
Proposition~\ref{tail:ordered}.
\end{lemma}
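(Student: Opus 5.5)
The plan is to read the ordered range directly off the cross-sum matrix \eqref{tail:crossmatrix} and then to show that one simultaneous relabelling of rows and columns always reaches it.

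\emph{Step 1: the ordered range in terms of cross sums.} After an opposite translation making $a_0=0$, the pair has the form \eqref{tail:triples}. Its diagonal cross sums are $g+h$, $-g$ and $-h$, and the $(0,2)$ entry is $-l$. So put $g=n-X_{11}$, $h=n-X_{22}$ and $l=n-X_{02}$; each lies in $\{1,\ldots,n-1\}$. The hypothesis $X_{00}+X_{11}+X_{22}=2n$ gives $X_{00}=g+h$ as an integer. Since $X_{00}\le n-1$, this yields $g+h<n$. The remaining requirement $l\le h$ is equivalent to
\[
 X_{02}\ge X_{22}.
\]
Thus the lemma reduces to producing this single inequality while keeping the diagonal sum equal to $2n$.

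\emph{Step 2: the allowed moves.} Apply the same permutation $\sigma$ to the rows and to the columns; this is allowed by Lemma~\ref{tail:factor-lemma}. It permutes the diagonal entries among themselves, so the diagonal sum stays $2n$ and nondegeneracy is preserved. It moves the entry $X_{ij}$ to position $(\sigma^{-1}i,\sigma^{-1}j)$. Hence it suffices to find distinct $i,j$ with
\[
 X_{ij}\ge X_{jj}.
\]
Then choose $\sigma$ with $\sigma(0)=i$ and $\sigma(2)=j$, translate the new first row entry to zero, and Step~1 applies.

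\emph{Step 3: such a pair exists (the main point).} Suppose instead that $X_{ij}<X_{jj}$ for all $i\ne j$. Since the pair is balanced, $X_{ij}+X_{ji}\equiv X_{ii}+X_{jj}\pmod n$. Moreover each $3$-cycle sum satisfies
\[
 X_{01}+X_{12}+X_{20}\equiv X_{02}+X_{21}+X_{10}\equiv X_{00}+X_{11}+X_{22}\equiv0\pmod n .
\]
Both $3$-cycle sums are positive, so each is at least $n$, and the off-diagonal total is at least $2n$.

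On the other hand, the assumption gives $X_{ij}+X_{ji}<X_{ii}+X_{jj}$ for every pair $i\ne j$. Both sides lie in $(0,2n)$ and they are congruent mod $n$, so $X_{ij}+X_{ji}=X_{ii}+X_{jj}-n$. Summing over the three unordered pairs, the off-diagonal total equals $2\cdot2n-3n=n<2n$. This is a contradiction.

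The only real content is this counting argument. The steps to check carefully are the congruences, which follow from $a_i+c_j+a_j+c_i=(a_i+c_i)+(a_j+c_j)$ and from balance for the $3$-cycles. The rest is bookkeeping of representatives.
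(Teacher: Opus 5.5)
Your proof is correct. Steps~1 and~2 match the paper's proof. The paper also orders rows and columns as $(i,r,j)$, translates the first row entry to zero, and reads off $g=n-X_{rr}$, $h=n-X_{jj}$, $l=n-X_{ij}$. It then gets $g+h=X_{ii}<n$ from the diagonal sum.

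The routes differ only in how you show that some $i\ne j$ has $X_{ij}\ge X_{jj}$.

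\textbf{The paper's argument.} It works on the cycle $\Z/n\Z$. Suppose every diagonal entry is a strict column maximum. Then the row labels $a_0,a_1,a_2$ must be distinct. Let $d_j$ be the forward cyclic gap from $a_j$ to the next row label. Moving forward by $d_j$ must carry $X_{jj}$ past $n$; otherwise $X_{jj}+d_j$ would be a larger entry in column $j$. Hence $X_{jj}+d_j>n$. Since $\sum_j d_j=n$, summing gives $\sum_j X_{jj}>2n$, a contradiction.

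\textbf{Your argument.} Yours is purely arithmetic. The congruences $X_{ij}+X_{ji}\equiv X_{ii}+X_{jj}\pmod n$, combined with the strict inequalities, force $X_{ij}+X_{ji}=X_{ii}+X_{jj}-n$. The off-diagonal total would then be $4n-3n=n$. On the other hand, balance makes the two $3$-cycle sums positive multiples of $n$, so that total is at least $2n$.

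\textbf{Comparison.} The paper's version does not need the balance condition. It yields a general fact: strict column maxima with nonzero cross sums force the diagonal sum above $2n$. Yours skips the cyclic-ordering and distinctness step. It uses the balance hypothesis, which the lemma provides, to reach a short counting contradiction.
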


\begin{proof}
There exist $i\ne j$ with $X_{ij}\ge X_{jj}$.  Otherwise
each diagonal entry would be a strict maximum in its column.
The three row labels would then be distinct.  Let $d_j>0$
be the forward cyclic gap from $a_j$ to the next row label.
Since the other entry in column $j$ is smaller and no cross
sum is zero, $X_{jj}+d_j>n$.  The gaps sum to $n$, so summing
would give $\sum_jX_{jj}>2n$, a contradiction.

Choose such $i,j$, let $r$ be the remaining position, and use
row and column order $(i,r,j)$.  Normalize the first row to
zero.  The resulting cubic indices have
\[
 g=n-X_{rr},\qquad h=n-X_{jj},\qquad l=n-X_{ij}.
\]
Thus $g,h\ge1$, $1\le l\le h$, and
$g+h=2n-X_{rr}-X_{jj}=X_{ii}<n$, as required.
\end{proof}

\section{Reflection by matrix inversion}
\label{sec:reflection}

Matrix inversion transfers a complete cubic block with first indices
$(g,h)$ to the block with first indices $(-h,-g)$. This is the final
step needed to cover every index choice.

\begin{lemma}\label{allodd:reflection}
Suppose the matrix \eqref{foundation:core} satisfies the quadratic equations
and $W_aW_{-a}=\delta$ for $a\ne0$.  Fix $g,h\in G$, put $t=g+h$, and assume $g,h,t\ne0$.  If $C(g,h,k,l)=0$ for every $k,l$, then
$C(-h,-g,k,l)=0$ for every $k,l$.
\end{lemma}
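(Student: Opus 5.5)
The plan is to view the cubic equations with fixed first indices $(g,h)$ as a matrix identity in $(k,l)$, and to pass from $(g,h)$ to $(-h,-g)$ by inverting that identity with the quadratics. Consider the $n\times n$ matrices
\[
 X_{m,k}=\mathsf A_{g,m+k},\qquad Y_{m,l}=\mathsf A_{h,m+l},
\]
and the diagonal weight $\mathsf A_{m,t}$. The summed term of $C(g,h,k,l)$ is then $(X^{\mathsf T}\,\mathrm{diag}(\mathsf A_{\cdot,t})\,Y)_{k,l}$. The subtracted term $\mathsf A_{g+l,k}\mathsf A_{h+k,l}$ is, after the linear identity \eqref{tail:linear} and the rotation \eqref{foundation:rotation}, again a product of one entry from a row indexed by $g$ and one from a row indexed by $h$.

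By Proposition~\ref{foundation:quadratics} with $h\ne0$, the Toeplitz-type matrix $(\mathsf A_{m+g',h'})$ has inverse $(\mathsf A_{h',m})$ in the convolution sense. In Fourier terms this is the reciprocal product relation of Lemma~\ref{foundation:fourier-products}: $\widehat u_h(k)\widehat u_{-h}(-k)=\tau^2e^{-2\pi ikh/n}$. Hence the matrices built from the rows $g$ and $h$ are invertible for $g,h\ne0$, and their inverses are built from the rows $-g$ and $-h$, up to index shifts and phases.

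The key steps are as follows.

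\textbf{Step 1.} Rewrite the hypothesis $C(g,h,\cdot,\cdot)=0$ as a matrix equation $X^{\mathsf T}DY=E$. Here $E_{k,l}=\mathsf A_{g+l,k}\mathsf A_{h+k,l}-\delta^{-1}\ind{g=h=0}$, and the indicator term is absent since $g,h\ne0$.

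\textbf{Step 2.} Multiply on the left by $(X^{\mathsf T})^{-1}$ and on the right by $Y^{-1}$, both supplied by $Q=0$. This gives $D=(X^{\mathsf T})^{-1}EY^{-1}$. Read entrywise, it expresses $\mathsf A_{m,t}$ through a double convolution of $E$ with entries of the rows $-g$ and $-h$. Equivalently, one can rearrange to an identity whose left side is a single sum over $m$ of a triple product with rows $-h,-g$ and column $-t=(-h)+(-g)$.

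\textbf{Step 3.} Use the multiplicative structure $\mathsf B_{a,b}=\delta^{-1}W_aW_{b-a}W_{-b}$ for $b\ne0$ together with $W_aW_{-a}=\delta$. This converts $E$, a rank-one-per-entry product, so that the double sum against $E$ collapses. Each factor of $E$ pairs with one inverse matrix through $Q=0$, leaving exactly the product term $\mathsf A_{-h+l',k'}\mathsf A_{-g+k',l'}$ of $C(-h,-g,k',l')$. The diagonal $D$ similarly becomes the summed term, after reindexing $m$ and applying the rotation symmetry $\mathsf A_{a,b}=\mathsf A_{-b,a-b}$ so that the column $t$ turns into $-t$.

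\textbf{Step 4.} Check the origin corrections coming from the $b=0$ column and from the indicators in $Q$. Since $g,h,t\ne0$, the only corrections arise at isolated $(k,l)$ where some cross sum vanishes. Those degenerate indices are already handled by Lemma~\ref{tail:collisions}, so they need no separate treatment here and only nondegenerate $(k,l)$ must be matched.

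The main obstacle is Step 3: showing that after inverting, the transformed equation is literally $C(-h,-g,k',l')=0$ rather than some other bilinear identity. Doing so requires tracking the index substitutions $(k,l)\mapsto(k',l')$ and the phase and $W$-prefactors exactly.

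I would first attempt this in Fourier space. The hypothesis is transformed in $k$ and $l$ using \eqref{an:finite}, so that the inverse matrices act diagonally through the reciprocal relation $\widehat u_h(k)\widehat u_{-h}(-k)=\tau^2e^{-2\pi ikh/n}$. The bracket in \eqref{an:finite} for $(g,h)$ should then map termwise to the bracket for $(-h,-g)$, up to a nonzero multiplier. If the prefactors fail to match, the fallback is to use the factorized form \eqref{tail:factor}. There, inversion should correspond to negating both triples, $(\boldsymbol a,\boldsymbol c)\mapsto(-\boldsymbol c,-\boldsymbol a)$, and $W_{-x}=\delta/W_x$ turns $S$ and $P$ into multiples of themselves times $\prod W$-factors.
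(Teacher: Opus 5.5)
Your factorization of the summed term is the right start. In unscaled form it is $L_{g,h}=H_gD_tH_h$, with Hankel factors $H_p(k,m)=\mathsf B_{p,k+m}$ that $Q=0$ inverts through the rows $-g,-h$. Steps 2--3, however, do not lead to the reflected cubic.

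In $C(-h,-g,\cdot,\cdot)$ the weight is the column $-t$. After $m\mapsto-m$, the reciprocal products make its entries $\delta$ times the \emph{reciprocals} $1/u_t(m)$, except at $m=0,t$. So $D^{-1}$, not $D$, must appear, and you have to invert the whole block identity $L_{g,h}=\tau R_{g,h}$, where $R_{g,h}(k,l)=\mathsf B_{g+l,k}\mathsf B_{h+k,l}$ is your $E$ up to scaling. Isolating $D=(X^{\mathsf T})^{-1}EY^{-1}$ only restates the hypothesis.

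The indispensable input is therefore an explicit inverse of the product-term matrix, and Step 3 does not supply one. The factors $\mathsf A_{g+l,k}$ and $\mathsf A_{h+k,l}$ each depend on both $k$ and $l$, so $E$ is not separable and no factor ``pairs with one inverse matrix''. What actually holds is $R_{g,h}=\delta^{-2}W_{-t}\operatorname{diag}(d_1)\,M\,\operatorname{diag}(d_2)$, with $d_1(k)=W_{h+k}W_{-k}$, $d_2(l)=W_{g+l}W_{-l}$ and $M(k,l)=u_{-t}(k-l-g)+\tau^2(\ind{k=0,l=-g}+\ind{k=-h,l=0})$. Thus $M$ is a circulant invertible by $Q=0$ plus a rank-two origin correction. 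A rank-two (Woodbury) inversion then gives $R_{-h,-g}(-k,-l)=\delta\tau^2R_{g,h}^{-1}(k,l)+E'(k,l)$, with $E'(k,l)=u_h(k)u_g(l)+u_h(k+t)u_g(l+t)$.

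Step 4 misplaces the corrections: in an inversion argument they are not confined to degenerate $(k,l)$. The two exceptional weights $1/u_t(m)=\delta^{-1}u_{-t}(-m)-\tau/\delta$ at $m=0,t$ give $L_{g,h}^{-1}(k,l)=\delta^{-1}\tau^{-4}\bigl(L_{-h,-g}(-k,-l)-\tau E'(k,l)\bigr)$. This involves the same rank-two $E'$, which is generically nonzero at every $(k,l)$. Likewise the two origin entries of $M$ change every entry of $M^{-1}$. The lemma holds precisely because these two rank-two corrections coincide, so that $L_{g,h}^{-1}=\tau^{-1}R_{g,h}^{-1}$ becomes $L_{-h,-g}=\tau R_{-h,-g}$. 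Lemma~\ref{tail:collisions} can supply degenerate target entries, but it cannot repair intermediate inverse matrices.

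Your fallbacks do not bypass this. Under $(\boldsymbol a,\boldsymbol c)\mapsto(-\boldsymbol c,-\boldsymbol a)$ one gets $P\mapsto\delta^9/P$. But $S$ becomes, up to $\delta^6$ and summands with a zero index, the sum of the reciprocals of the summands of $S$. That is not a multiple of $S$, and relating those two sums is exactly what the inversion accomplishes. In Fourier space only the Hankel factors diagonalize: the diagonal weight becomes a convolution, and the product transforms $S_{p,q,s}$ satisfy no reciprocal relation like Lemma~\ref{foundation:fourier-products}.
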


\begin{proof}
We invert the two sides of the cubic block separately. All matrices
have rows and columns indexed by $G$.

\emph{1. Column inverses.} Write $u_p(a)=\mathsf B_{a,p}$ for
$p\ne0$.  The coefficient formula and the quadratic equations give
\begin{equation}\label{allodd:column-inverses}
 u_p(a)=\frac{W_aW_{p-a}}{W_p}=u_p(p-a),\qquad
 \sum_a u_p(a)u_{-p}(x-a)=\tau^2\ind{x=0}.
\end{equation}
For the second identity, multiply $Q(x,p)=0$ by $\tau^2$,
use $\mathsf B_{p,a}=u_{-p}(a-p)=u_{-p}(-a)$, and reindex.
Consequently the circulant with entries $u_p(k-l)$ has inverse
$\tau^{-2}u_{-p}(k-l)$.

\emph{2. The left side.} Define
\[
 L_{g,h}(k,l)=\sum_m u_t(m)\mathsf B_{g,m+k}
                                      \mathsf B_{h,m+l},
 \qquad
 R_{g,h}(k,l)=\mathsf B_{g+l,k}\mathsf B_{h+k,l}.
\]
The assumed cubics say $L_{g,h}=\tau R_{g,h}$.
Let $H_p(k,m)=\mathsf B_{p,k+m}$ and let $D_t$ be the
diagonal matrix with entries $u_t(m)$.  Equation
\eqref{allodd:column-inverses} gives
\[
 H_p^{-1}(k,m)=\tau^{-2}u_p(k+m),\qquad
 L_{g,h}=H_gD_tH_h.
\]
All entries of $D_t$ are nonzero.  Thus $L_{g,h}$ is invertible.
The reciprocal product identities, including the two exceptional samples $m=0,t$,
give
\begin{equation}\label{allodd:diagonal-inverse}
 \frac1{u_t(m)}=\delta^{-1}u_{-t}(-m)
       -\frac{\tau}{\delta}
                    (\ind{m=0}+\ind{m=t}).
\end{equation}
Set
\[
 E(k,l)=u_h(k)u_g(l)+u_h(k+t)u_g(l+t),
 \qquad
 L^{\mathrm r}(k,l)=L_{-h,-g}(-k,-l).
\]
On inserting \eqref{allodd:diagonal-inverse} into
$L_{g,h}^{-1}=H_h^{-1}D_t^{-1}H_g^{-1}$ and replacing
$m$ by $-m$, we obtain
\begin{equation}\label{allodd:left-inverse}
 L_{g,h}^{-1}=\delta^{-1}\tau^{-4}
                         (L^{\mathrm r}-\tau E).
\end{equation}
Here $u_h(k-m)=\mathsf B_{-h,m-k}$, by
\eqref{allodd:column-inverses} and order-three symmetry.

\emph{3. The right side.} We prove, using only the quadratics and
reciprocal products, the corresponding identity
\begin{equation}\label{allodd:right-inverse}
 R^{\mathrm r}=\delta\tau^2 R_{g,h}^{-1}+E,
 \qquad
 R^{\mathrm r}(k,l)=R_{-h,-g}(-k,-l).
\end{equation}
The details below retain the origin corrections in the coefficient formula.
Put
\[
 d_1(k)=W_{h+k}W_{-k},\qquad d_2(l)=W_{g+l}W_{-l},
 \qquad b=u_t(g)=u_t(h)=\frac{W_gW_h}{W_t}\ne0.
\]
Direct substitution of
$\mathsf B_{a,b}=\delta^{-1}W_aW_{b-a}W_{-b}
+\tau^2\delta^{-1}\ind{a=b=0}$ yields
\begin{equation}\label{allodd:right-factor}
 R_{g,h}=\delta^{-2}W_{-t}\,
                   \operatorname{diag}(d_1)M\operatorname{diag}(d_2),
\end{equation}
where
\[
 M(k,l)=u_{-t}(k-l-g)
       +\tau^2(\ind{k=0,l=-g}+\ind{k=-h,l=0}).
\]
The uncorrected matrix $K(k,l)=u_{-t}(k-l-g)$ has inverse
$K^{-1}(k,l)=\tau^{-2}u_t(k-l+g)$.
Writing $U=(e_0,e_{-h})$ and $V=(e_{-g},e_0)$, we have
$M=K+\tau^2 UV^{\mathsf T}$ and
\[
 I_2+\tau^2V^{\mathsf T}K^{-1}U=
       \begin{pmatrix}0&b\\b&0\end{pmatrix}.
\]
This matrix is invertible.  The rank-two inverse formula therefore
proves that $M$, hence $R_{g,h}$, is invertible and that
\begin{equation}\label{allodd:woodbury}
 M^{-1}(k,l)=\tau^{-2}(U_0-U_1-U_2),
\end{equation}
where, for the entry $(k,l)$ in question,
\[
 U_0=u_t(k-l+g),\qquad
 U_1=b^{-1}u_t(k+g)u_t(g-l),\qquad
 U_2=b^{-1}u_t(k+t)u_t(-l).
\]

To check the exceptional entries in
\eqref{allodd:right-inverse}, define
\[
 q_a=W_aW_{-a}=\delta-\tau\ind{a=0},\quad
 \alpha=\frac{q_kq_l}{\delta^2},\quad
 \beta=\frac{q_{g+k}q_{h+l}}{\delta^2},\quad
 Z=\frac{W_{-t}d_2(k)d_1(l)}{\delta^3}.
\]
To see the endpoint corrections explicitly, expand
$R^{\mathrm r}(k,l)=\mathsf B_{-h-l,-k}\mathsf B_{-g-k,-l}$
using \eqref{tail:uniform-core}. The reciprocal products give
\[
\begin{aligned}
 ZR^{\mathrm r}(k,l)
   &=\alpha\beta U_0+\frac{\tau^2}{\delta^2}
       (\ind{k=0,l=-h}+\ind{k=-g,l=0}),\\
 Z\,u_h(k)u_g(l)&=\alpha U_1,\\
 Z\,u_h(k+t)u_g(l+t)&=\beta U_2.
\end{aligned}
\]
In the first line the two corrections come from the two possible
origin entries of $\mathsf B$; they cannot occur together since
$g,h\ne0$. The other two lines follow from $W_tW_{-t}=\delta$
and the definitions of $U_1,U_2$.
Subtracting the two terms of $E$ gives the exact entry identity
\begin{equation}\label{allodd:endpoints}
 Z(R^{\mathrm r}-E)(k,l)
  =\alpha\beta U_0-\alpha U_1-\beta U_2
     +\frac{\tau^2}{\delta^2}
        (\ind{k=0,l=-h}+\ind{k=-g,l=0}).
\end{equation}
If $\alpha\ne1$, then $k=0$ or $l=0$, and $U_1=U_0$.
If $\beta\ne1$, then $k=-g$ or $l=-h$, and $U_2=U_0$.
Both exceptional conditions can hold only at $(k,l)=(0,-h)$
or $(-g,0)$: the assumptions $g,h\ne0$ exclude the other
intersections.  At either point,
$\alpha=\beta=\delta^{-1}$ and $U_0=U_1=U_2=-1$.
Thus the last term of \eqref{allodd:endpoints} cancels
$(\alpha-1)(\beta-1)U_0=-\tau^2/\delta^2$.
In every case the right side of \eqref{allodd:endpoints} is
$U_0-U_1-U_2$.  Equations \eqref{allodd:right-factor} and
\eqref{allodd:woodbury} now give
\[
 Z(R^{\mathrm r}-E)(k,l)
   =\tau^2M^{-1}(k,l)
   =Z\delta\tau^2R_{g,h}^{-1}(k,l).
\]
The factor $Z$ is nonzero, proving
\eqref{allodd:right-inverse} at all entries.

\emph{4. Comparing the inverses.} Now $L_{g,h}=\tau R_{g,h}$ and
\eqref{allodd:left-inverse} imply
\[
 L^{\mathrm r}=\delta\tau^3R_{g,h}^{-1}+\tau E
             =\tau R^{\mathrm r},
\]
where the second equality is \eqref{allodd:right-inverse}.
These are precisely all the cubics with first indices $(-h,-g)$.
\end{proof}

\begin{proposition}\label{allodd:all-cubics}
For every odd $n\ge3$, the analytic coefficient matrix satisfies
$C(g,h,k,l)=0$ for all $g,h,k,l$.
\end{proposition}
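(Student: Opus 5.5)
The proof assembles the three preceding results by a case split on the degeneracy of the index choice. First, if any cross sum in \eqref{tail:crossmatrix} vanishes, or if one of $g,h,g+h$ is zero, then Lemma~\ref{tail:collisions} gives $C(g,h,k,l)=0$ directly from $Q=0$ (Proposition~\ref{foundation:quadratics}). So it remains to treat nondegenerate balanced pairs with $g,h,g+h\ne0$.

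For such a pair, represent the nine cross sums by integers $X_{ij}\in\{1,\ldots,n-1\}$. The diagonal cross sums satisfy $(a_0+c_0)+(a_1+c_1)+(a_2+c_2)=0$ in $G$, because the pair is balanced. Hence $X_{00}+X_{11}+X_{22}$ lies in $\{n,2n\}$, since each term is in $[1,n-1]$. This sum depends on the chosen pairing of rows and columns, so I will run the argument over all six bijections between rows and columns.

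\emph{Case $2n$ for some bijection.} Permute the columns to realize that bijection; Lemma~\ref{tail:factor-lemma} allows independent permutations. Lemma~\ref{tail:normalform} then puts the pair in the ordered range. Proposition~\ref{tail:ordered} gives vanishing there, and Lemma~\ref{tail:factor-lemma} transports the zero back to the original indices.

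\emph{Case $n$ for every bijection.} Negate all entries of both triples. This sends $X_{ij}\mapsto n-X_{ij}$ and a diagonal sum of $n$ to $3n-n=2n$. The negated pair is again balanced and nondegenerate, so by the previous case its cubic vanishes. The main obstacle is to identify this negation with the reflection of Lemma~\ref{allodd:reflection}, which passes from first indices $(g,h)$ to $(-h,-g)$.

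To make the identification, I will first show that the cubics for the negated pair cover an entire block $C(g',h',\cdot,\cdot)$. Then Lemma~\ref{allodd:reflection} gives the whole block $C(-h',-g',\cdot,\cdot)$, and I will check that this block contains the original quadruple. Concretely, negating $(g,h,k,l)$ through \eqref{tail:triples} and then applying opposite translation plus the swap of row positions $1,2$ should yield indices related to $(-h,-g,\ast,\ast)$.

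A cleaner route avoids the $n$/$2n$ bookkeeping altogether. Fix the block $(g,h)$ with $g,h,g+h\ne0$. Call it \emph{good} if every nondegenerate member has some bijection with diagonal sum $2n$. Show that for every block, either the block or its reflected block $(-h,-g)$ is good; for the reflected block, this follows because negation of the triples exchanges the two diagonal-sum values. Good blocks vanish completely, using the cases above together with Lemma~\ref{tail:collisions} for the degenerate members. Lemma~\ref{allodd:reflection} then transfers the vanishing to the partner block. The hardest step is checking that goodness is a block-level property, since a block mixes the two types as $(k,l)$ vary.

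If goodness does fail to be block-level, I would use the fact that Lemma~\ref{allodd:reflection} needs the whole block, and run a two-stage argument. First, the ordered-range zeros from Proposition~\ref{tail:ordered}, spread by permutations, cover all quadruples of type $2n$. Second, I would show that the quadruples of type $n$ in block $(g,h)$ correspond under reflection exactly to quadruples of type $2n$ in block $(-h,-g)$. The reflection is effectively the inversion $k\mapsto-k$, $l\mapsto-l$, so this correspondence can be checked entrywise on the cross-sum matrix.
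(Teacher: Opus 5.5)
Your architecture is the paper's: Lemma~\ref{tail:collisions} for degenerate indices, Lemma~\ref{tail:normalform} with Proposition~\ref{tail:ordered} for pairs with a diagonal sum of $2n$, and Lemma~\ref{allodd:reflection} for the rest. But you leave open the one step that makes this work, and the justification you sketch does not carry it.

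\textbf{Why your sketch does not close.} Your notion of a ``good'' block uses a pairing chosen separately for each $(k,l)$. Negating a single member of type ``every pairing sums to $n$'' tells you about only one member of the block $(-h,-g)$. Lemma~\ref{allodd:reflection}, however, needs the entire block $(-h,-g)$ to vanish before it yields anything about $(g,h)$.

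This existential notion really does mix inside a block. For $n=5$, the block $(3,3)$ contains $(3,3,1,1)$, whose cross sums admit a pairing summing to $4+4+2=2n$. It also contains $(3,3,4,4)$, whose cross sums are all $1$ in row $0$ and all $2$ in rows $1,2$, so every pairing sums to $n$.

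For the same reason, your per-quadruple ``Case $n$'' step cannot close on its own. The negated pair's cubic is a different equation: $W_{-x}=\delta/W_x$ turns $S$ into a sum of reciprocals, not into $S$. Only the block-level matrix inversion relates the two.

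\textbf{The missing observation.} The identity-pairing cross sums in \eqref{tail:crossmatrix} are $g+h,\,-g,\,-h$, independent of $(k,l)$. Take representatives $1\le g,h<n$; here $g+h\ne n$ because $g+h\not\equiv0$. Then these diagonal entries are $(g+h,\,n-g,\,n-h)$, with sum $2n$, when $g+h<n$. They are $(g+h-n,\,n-g,\,n-h)$, with sum $n$, when $g+h>n$.

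It follows that every nondegenerate member of a block with $g+h<n$ is placed in the ordered range by the proof of Lemma~\ref{tail:normalform} using the identity pairing. No search over six bijections is needed. With Lemma~\ref{tail:collisions}, the whole block vanishes. Every block with $g+h>n$ is the reflection of the block $(n-h,n-g)$, whose representatives sum to $2n-g-h<n$. That is exactly the paper's proof.

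\textbf{The fallback.} Your fallback should be dropped. Lemma~\ref{allodd:reflection} transfers vanishing only for complete blocks, because $L_{g,h}^{-1}$ depends on every entry of $L_{g,h}$. An entrywise matching of type-$n$ members of one block with type-$2n$ members of the other block is therefore circular. It yields nothing unless one of the two blocks is already known to vanish completely, and the observation above is what tells you which block that is.
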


\begin{proof}
The degenerate indices, including the cases $g=0$, $h=0$, or $g+h=0$ have
already been proved in Lemma~\ref{tail:collisions}.
Take positive representatives $1\le g,h<n$ with $g+h<n$.
For every nondegenerate pair associated with $(g,h,k,l)$, the
distinguished cross diagonal has representatives
\[
 (g+h,n-g,n-h),
\]
whose sum is $2n$.  The proof of Lemma~\ref{tail:normalform}
therefore puts this pair into the ordered range.  Proposition~\ref{tail:ordered} and
the permutation and translation invariance in
Lemma~\ref{tail:factor-lemma} prove all these cubics, for every $k,l$.

If instead $g+h>n$, the positive representatives
$g'=n-h$, $h'=n-g$ satisfy $g'+h'<n$.  All cubics for $(g',h')$
have just been proved; Lemma~\ref{allodd:reflection} then gives
all cubics for $(-h',-g')=(g,h)$.  This exhausts the indices.
\end{proof}

%% file: proof_revised/reconstruction.tex
\section{From the matrix equations to fusion categories}
\label{tail:reconstruction}

The preceding parts prove the linear, quadratic, and cubic equations.
Only the quartic equations remain before reconstruction can be applied.

\begin{lemma}[The quartics follow from the cubics]\label{tail:quartic-lemma}
The matrix $\mathsf A$ satisfies the quartic Evans--Gannon equations.
\end{lemma}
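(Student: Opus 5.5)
The plan is to write the Evans--Gannon quartic equation \cite[Eq.~(4.10)]{EG} as a sum over one free index of a product of four entries of $\mathsf A$, minus its lower-order terms, and to recognize it as a contraction of the cubic residual $C$ against a column of $\mathsf A$. Concretely, the quartic has the shape
\[
 \sum_m \mathsf A_{m,a}\mathsf A_{m,b}\mathsf A_{m,c}\mathsf A_{m,e}
 \;+\;(\text{quadratic and linear corrections}),
\]
with index combinations fixed by the Haagerup--Izumi associativity constraints. Inside this sum I will isolate a triple product of the form $\sum_m\mathsf A_{m,g+h}\mathsf A_{g,m+k}\mathsf A_{h,m+l}$. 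This should be possible after using the linear identities \eqref{tail:linear}, in particular $\mathsf A_{a,b}=\mathsf A_{-b,a-b}$, to move each factor into the index pattern of \eqref{tail:C}. Proposition~\ref{allodd:all-cubics} gives $C=0$ for all indices, so each such inner cubic sum can be replaced by
\[
 \mathsf A_{g+l,k}\mathsf A_{h+k,l}-\delta^{-1}\ind{g=h=0}.
\]

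After this substitution, the quartic reduces to a sum over one index of a product of three entries, namely $\mathsf A$ times the two-factor product above. I will rearrange that product, again with \eqref{tail:linear}, into the quadratic pattern $\sum_m\mathsf A_{m+g,h}\mathsf A_{h,m}$. Proposition~\ref{foundation:quadratics} ($Q=0$) then collapses it to $\ind{g=0}-\delta^{-1}\ind{h=0}$. The $\delta^{-1}\ind{g=h=0}$ term from $C$ is handled the same way: contracting it against a single column leaves a linear sum, which is evaluated by $\sum_a\mathsf A_{a,0}=-\delta^{-1}$ and the row formula $\mathsf A_{0,a}=\ind{a=0}-\tau^{-1}$.

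Collecting the resulting indicator terms, I expect the constants to be exactly those on the right-hand side of the quartic, after using $\delta^2=n\delta+1$ and $\tau=\delta-1$. If direct contraction fails for boundary index choices, those cases are treated separately by the explicit axis values of $\mathsf A$, as in Lemma~\ref{tail:collisions}.

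The main obstacle is the index bookkeeping. I have to check that, for \emph{every} index choice in (4.10), some relabeling via the order-three symmetry \eqref{foundation:rotation} and $\mathsf A_{a,b}=\mathsf A_{-b,a-b}$ turns a three-factor subproduct into the cubic pattern, and that the leftover factor carries the summation index so that the quadratic identity applies. I will first try the most direct grouping: pull out the factor depending on the summation index only through a column label, and apply $C=0$ to the rest. Only if that grouping fails for some indices will I look for an alternative pairing. Once the contraction identity is established as a general algebraic identity of the form
\[
 \text{quartic residual}=\sum \mathsf A\cdot C+\text{(multiples of }Q\text{ and linear residuals)},
\]
the lemma follows immediately from the vanishing of $C$, $Q$, and the linear residuals.
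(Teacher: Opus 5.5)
Your overall strategy (write the quartic residual as a contraction of $C$ against an entry of $\mathsf A$ plus a multiple of $Q$) is the paper's. However, the proposal has a genuine gap at exactly the step that constitutes the proof.

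First, the shape you assign to the quartic is wrong, and with that shape the substitution fails. If the quartic were $\sum_m\mathsf A_{m,a}\mathsf A_{m,b}\mathsf A_{m,c}\mathsf A_{m,e}$, every factor would depend on the single summation index. Then $C=0$, which is an identity about the \emph{summed} triple product, could not be applied to three factors while the fourth still varies with $m$. The contraction works only because the Evans--Gannon quartic is a double sum
\[
 \sum_{l,m}\mathsf A_{l,m}\mathsf A_{l+g,h}\mathsf A_{h+m,l+i}\mathsf A_{i,k+m},
\]
in which $\mathsf A_{i,k+m}$ does not involve the inner index $l$. Second, you never produce the relabeling. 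Deferring it as ``index bookkeeping'', with a fallback to case analysis, leaves the lemma unproved.

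The missing computation is short. By \eqref{tail:linear}, $\mathsf A_{l+g,h}=\mathsf A_{-h,l+g-h}$, and the inner sum is then a cubic sum with indices $(-h,h+m,g-h,i)$:
\[
 \sum_l\mathsf A_{l,m}\mathsf A_{-h,l+g-h}\mathsf A_{h+m,l+i}
 =C(-h,h+m,g-h,i)+\mathsf A_{i-h,g-h}\mathsf A_{m+g,i}
  -\delta^{-1}\ind{h=0}\ind{m=0}.
\]
The second structural point is that $\mathsf A_{i-h,g-h}$ does not depend on $m$. So after multiplying by $\mathsf A_{i,k+m}$ and summing over $m$, what remains is the quadratic sum
\[
 \sum_m\mathsf A_{m+g,i}\mathsf A_{i,k+m}=Q(g-k,i)+\ind{g=k}-\delta^{-1}\ind{i=0}.
\]
The indicator term from $C$ collapses to the single term $\delta^{-1}\ind{h=0}\mathsf A_{i,k}$, not a linear sum. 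The leftover corrections match those of the quartic via $\mathsf A_{i-h,g-h}=\mathsf A_{h-g,i-g}$ and $\mathsf A_{-h,g-h}=\mathsf A_{g,h}$. Neither $\sum_a\mathsf A_{a,0}=-\delta^{-1}$ nor $\delta^2=n\delta+1$ is used.

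Let $K(g,h,i,k)$ be the quartic residual, i.e.\ the double sum above minus $\mathsf A_{h-g,i-g}\ind{k=g}$ plus $\delta^{-1}\ind{h=0}\mathsf A_{i,k}+\delta^{-1}\mathsf A_{g,h}\ind{i=0}$. The result is
\[
 K(g,h,i,k)=\sum_m\mathsf A_{i,k+m}C(-h,h+m,g-h,i)+\mathsf A_{i-h,g-h}Q(g-k,i).
\]
This is a polynomial identity valid for all indices, so no boundary cases need separate treatment.
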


\begin{proof}
Write their quartic residual as
\begin{align*}
 K(g,h,i,k)={}&\sum_{l,m}\mathsf A_{l,m}\mathsf A_{l+g,h}
                  \mathsf A_{h+m,l+i}\mathsf A_{i,k+m}
       -\mathsf A_{h-g,i-g}\ind{k=g}\\
 &\quad+\delta^{-1}\ind{h=0}\mathsf A_{i,k}
       +\delta^{-1}\mathsf A_{g,h}\ind{i=0}.
\end{align*}
Order-three symmetry gives the polynomial identity
\begin{equation}\label{tail:quartic}
 K(g,h,i,k)=\sum_m\mathsf A_{i,k+m}C(-h,h+m,g-h,i)
             +\mathsf A_{i-h,g-h}Q(g-k,i).
\end{equation}
Indeed, replace $\mathsf A_{l+g,h}$ in the double sum by
$\mathsf A_{-h,l+g-h}$. The inner sum becomes
\[
 C(-h,h+m,g-h,i)+\mathsf A_{i-h,g-h}\mathsf A_{m+g,i}
                  -\delta^{-1}\ind{h=0}\ind{m=0}.
\]
Multiply by $\mathsf A_{i,k+m}$ and sum over $m$.
The remaining quadratic sum is
$Q(g-k,i)+\ind{g=k}-\delta^{-1}\ind{i=0}$.
Its correction terms cancel by order-three symmetry:
$\mathsf A_{i-h,g-h}=\mathsf A_{h-g,i-g}$ and
$\mathsf A_{-h,g-h}=\mathsf A_{g,h}$.
This proves \eqref{tail:quartic}, and $Q=C=0$ gives $K=0$.
\end{proof}

\begin{lemma}[Changing the dimension embedding]\label{tail:embedding}
There is a field automorphism $\sigma$ of $\C$ with $\sigma(\delta)=d$.
Applying it entrywise to $\mathsf A$ gives a solution of the same
reconstruction equations with dimension parameter $d$.
\end{lemma}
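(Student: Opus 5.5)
The plan has three steps: construct $\sigma$ on $\C$, check that the entries of $\mathsf A$ lie in a field on which $\sigma$ is defined compatibly, and push every identity through $\sigma$.

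\emph{Constructing $\sigma$.} The field $\Q(\sqrt{n^2+4})$ has the nontrivial automorphism $\sqrt{n^2+4}\mapsto-\sqrt{n^2+4}$. This automorphism exchanges the two roots $d$ and $\delta=-d^{-1}$ of $x^2-nx-1$. Since $d\delta=-1$ and $d+\delta=n$, the root $(n-\sqrt{n^2+4})/2$ equals $\delta$, so the automorphism sends $\delta\mapsto d$. The standard extension theorem for algebraically closed fields extends it to an automorphism $\sigma$ of $\C$. The extension goes first to an algebraic closure, then to a transcendence basis of $\C$ over $\overline{\Q}$ fixed pointwise; this uses Zorn's lemma, or equivalently the fact that $\C$ has transcendence degree continuum over every countable subfield. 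The extension is discontinuous, but no continuity is needed.

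\emph{Applying $\sigma$ to the equations.} The linear, quadratic and quartic Evans--Gannon equations (4.7)--(4.10) with $\omega=1$ are polynomial identities in the entries $\mathsf A_{a,b}$. Their coefficients are rational combinations of $1$, $\delta$ and $\delta^{-1}$, as in \eqref{tail:linear}, \eqref{foundation:Q} and the quartic residual of Lemma~\ref{tail:quartic-lemma}. These identities hold by \eqref{tail:linear}, Proposition~\ref{foundation:quadratics}, Proposition~\ref{allodd:all-cubics} and Lemma~\ref{tail:quartic-lemma}. Since $\sigma$ is a ring automorphism fixing $\Q$, applying it to each identity gives the same identity for the matrix $\sigma(\mathsf A)$, with $\delta$ replaced by $\sigma(\delta)=d$. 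In particular the constraint $\delta^2=n\delta+1$ becomes $d^2=nd+1$, so $d$ is an admissible choice of root in \cite[Theorem~2(a)]{EG}. The normalization also transforms correctly: $\tau=\delta-1$ goes to $d-1$, and the linear identities $\mathsf A_{a,0}=\ind{a=0}-\tau^{-1}$ and $\sum_a\mathsf A_{a,0}=-\delta^{-1}$ become the corresponding identities with $d$ in place of $\delta$. Because $\sigma$ is bijective, nonvanishing is preserved, so any invertibility or nonzero-entry hypotheses in the reconstruction theorem continue to hold for $\sigma(\mathsf A)$.

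\emph{Main obstacle.} The step needing the most care is confirming that every coefficient appearing in the Evans--Gannon system with $\omega=1$ is either rational or a rational function of $\delta$ alone. If any coefficient involved other data, $\sigma$ would have to be specified on that data too. I would check this by rewriting (4.7)--(4.10) in the form used in Section~\ref{sec:targets}, where only $\delta$ and rational constants occur. The transcendental entries $W_a$ need no attention: they are not required to be fixed by $\sigma$, since the identities are purely algebraic.
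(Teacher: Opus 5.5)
Your proposal is correct and follows the paper's proof. You swap the roots of $X^2-nX-1$ on $\Q(\delta)$, which is legitimate because $n^2+4$ is not a square for $n\ge3$; you should state this, as the paper does. You then extend through $\overline{\Q}$ and a fixed transcendence basis to an automorphism of $\C$, and transport the Evans--Gannon identities, which are polynomial in the entries of $\mathsf A$ with coefficients in $\Q(\delta)$, and your explicit checks on $\tau^{-1}$, $\delta^{-1}$ and nonvanishing just spell out what the paper compresses into ``polynomial identities over $\Q$ after their nonzero denominators are cleared.''
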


\begin{proof}
The two roots of $X^2-nX-1$ are $\delta$ and $d$, and its
discriminant $n^2+4$ is not a square for $n\ge3$.
Thus $\Q(\delta)$ has an automorphism interchanging the roots.
Extend it to $\overline{\Q}$, fix a transcendence basis of
$\C/\overline{\Q}$, and extend to the algebraic closure $\C$.
This gives $\sigma$. All reconstruction identities are polynomial
identities over $\Q$ after their nonzero denominators are cleared,
so they are preserved by $\sigma$.
\end{proof}

\begin{proof}[Proof of Theorem~\ref{intro:main}]
The linear identities \eqref{tail:linear}, the quadratics
(Proposition~\ref{foundation:quadratics}), and the quartics
(Lemma~\ref{tail:quartic-lemma}) give exactly
\cite[Eqs.~(4.7)--(4.10)]{EG} with $\omega=1$.
Theorem~2(a) of that paper allows the negative root $\delta$ and
constructs a spherical fusion category with simple objects
$\alpha^a$ and $\rho_a=\alpha^a\rho$, $a\in G$,
the fusion rules \eqref{intro:fusion1}--\eqref{intro:fusion2}, and
categorical dimensions $1$ and $\delta$.

Apply Lemma~\ref{tail:embedding} and reconstruct once more.
The resulting spherical category has categorical dimensions $1$ and $d$,
equal to its Frobenius--Perron dimensions. Its global dimension is
$n(1+d^2)$, also its Frobenius--Perron dimension, so it is pseudo-unitary.
Finally, the categories for different odd $n$ have different ranks $2n$,
and hence are pairwise inequivalent.
\end{proof}

\begin{remark}
Pseudo-unitarity is the conclusion here. Unitarity would additionally
require a Hermitian positive-dimensional reconstruction matrix
\cite[Theorem~2(c)]{EG}; the argument above does not establish that
property. Neither arithmetic information about the coefficient field
nor a second reflected contour calculation is needed for the stated
existence theorem.
\end{remark}